\documentclass[a4paper,11pt]{article}
\usepackage[utf8]{inputenc}
\usepackage{amsthm}
\usepackage{amsmath}
\usepackage{amssymb}
\usepackage{graphicx}
\usepackage{enumitem}
\usepackage{authblk}
\usepackage{breakurl}
\usepackage[pdftitle={On orthogonal symmetric chain decompositions}]{hyperref}
\usepackage{placeins}
\usepackage[margin=2.5cm]{geometry}
\usepackage[usenames,dvipsnames,table]{xcolor}
\usepackage{cancel}
\usepackage{bookmark}
\usepackage{array}

\hypersetup{pdfauthor={Karl D\344ubel, Sven J\344ger, Torsten M\374tze, Manfred Scheucher}}

\usepackage{tikz}
\usetikzlibrary{calc, arrows, decorations.pathreplacing, matrix, backgrounds}%,external}

%\tikzexternalize[prefix=figures/cache/]

\def\dotlength{2pt}
\def\dashlength{6pt}
\def\gablength{2pt}

\tikzset{label distance = 2pt,
node_cube/.style={draw = black, fill = black, circle, inner sep = 0pt, text width = 0pt, text height = 0pt, text depth = 0pt, minimum size = 3pt},
node_cube_label/.style={black, rounded corners, inner sep = 0pt, fill = white, text opacity = 1, fill opacity = .75},
node_square/.style={draw = black, fill = black, rectangle, inner sep = 0pt, text width = 0pt, text height = 0pt, text depth = 0pt, minimum size = 5pt},
full/.style={fill=black},
deficient/.style={fill=white},
edge_cube_norm/.style={draw=black!20!white, thin},
edge_chain/.style={draw=black!20!white, very thick},
edge1/.style={
draw=red,line width=0.6mm, dash pattern = {on \dotlength off \gablength}
},
edge2/.style={
draw=blue,line width=0.6mm, dash pattern = {on \dashlength off \gablength}
},
edge3/.style={
draw=green,line width=0.6mm, solid
%dash pattern = {on \dashlength off \gablength on \dotlength off \gablength}
},
edge4/.style={
draw=brown,line width=0.6mm, dash dot dot
}
}

\usepackage{pifont}
\newcommand{\yes}{{\color{OliveGreen}\ding{51}}}
\newcommand{\no}{{\color{Red}\ding{55}}}
\newcommand{\idk}{{\color{Orange}\textbf{???}}}

\newcommand{\ol}{\overline}
\newcommand{\neck}[1]{\langle #1\rangle}

\newcommand{\DGKS}{D^{\mathrm{GKS}}}
\newcommand{\RGKS}{R^{\mathrm{GKS}}}
\newcommand{\DJo}{D^{\mathrm{J}}}
\newcommand{\RJo}{R^{\mathrm{J}}}

\newtheorem{theorem}{Theorem}

\newtheorem{lemma}{Lemma}
\newtheorem{proposition}[lemma]{Proposition}
\newtheorem{observation}[lemma]{Observation}

\theoremstyle{remark}
\newtheorem{remark}{Remark}

\newcommand{\NN}{\mathbb{N}}

\hyphenation{Mini-SAT Glu-cose}

% to prevent widows and orphans
\widowpenalty10000
\clubpenalty10000

\title{On orthogonal symmetric chain decompositions\footnote{An extended abstract of this paper has been submitted to Eurocomb 2019.}}
\author{Karl D\"aubel}
\author{Sven J\"ager}
\author{Torsten M\"utze}
\author{Manfred Scheucher}
\affil{Institut f\"ur Mathematik, Technische Universit\"at Berlin, Germany \\
  \small \texttt{\{daeubel,jaeger,muetze,scheucher\}@math.tu-berlin.de}}
\date{\vspace{-5ex}}

\begin{document}

\maketitle

\begin{abstract}
The \emph{$n$-cube} is the poset obtained by ordering all subsets of $\{1,\ldots,n\}$ by inclusion, and it can be partitioned into $\binom{n}{\lfloor n/2\rfloor}$ chains, which is the minimum possible number.
Two such decompositions of the $n$-cube are called \emph{orthogonal} if any two chains of the decompositions share at most a single element.
Shearer and Kleitman conjectured in~1979 that the $n$-cube has $\lfloor n/2\rfloor+1$ pairwise orthogonal decompositions into the minimum number of chains, and they constructed two such decompositions.
Spink recently improved this by showing that the $n$-cube has three pairwise orthogonal chain decompositions for~$n\geq 24$.
In this paper, we construct four pairwise orthogonal chain decompositions of the $n$-cube for~$n\geq 60$.
We also construct five pairwise \emph{edge-disjoint} symmetric chain decompositions of the $n$-cube for~$n\geq 90$, where edge-disjointness is a slightly weaker notion than orthogonality, improving on a recent result by Gregor, J\"ager, M\"utze, Sawada, and Wille.
\end{abstract}

\section{Introduction}

The \emph{$n$-dimensional cube $Q_n$}, or $n$-cube for short, is the poset obtained by taking all subsets of $[n]:=\{1,\dotsc,n\}$, and ordering them by inclusion.
This poset is sometimes also called the \emph{subset lattice} or the \emph{Boolean lattice}, and it is a fundamental and widely studied object in combinatorics.
For illustration, Figure~\ref{fig:Q4} shows the 4-cube.
In this figure and throughout this paper, we draw posets by their Hasse diagrams.

\begin{figure}
\centering
\begin{tikzpicture}
[xscale = 2, yscale = 1.5]

%Define the coordinates of the nodes
\coordinate (0000) at (0.000000,0);
\coordinate (1000) at (-1.500000,1);
\coordinate (0100) at (-0.500000,1);
\coordinate (1100) at (-2.500000,2);
\coordinate (0010) at (0.500000,1);
\coordinate (1010) at (-1.500000,2);
\coordinate (0110) at (0.500000,2);
\coordinate (1110) at (-1.500000,3);
\coordinate (0001) at (1.500000,1);
\coordinate (1001) at (-0.500000,2);
\coordinate (0101) at (1.500000,2);
\coordinate (1101) at (-0.500000,3);
\coordinate (0011) at (2.500000,2);
\coordinate (1011) at (0.500000,3);
\coordinate (0111) at (1.500000,3);
\coordinate (1111) at (0.000000,4);

%Write the edges
\path[edge_cube_norm] (0000) to[] (0001);
\path[edge_cube_norm] (0000) to[] (0010);
\path[edge_cube_norm] (0000) to[] (0100);
\path[edge_cube_norm] (0000) to[] (1000);
\path[edge_cube_norm] (0001) to[] (0011);
\path[edge_cube_norm] (0001) to[] (0101);
\path[edge_cube_norm] (0001) to[] (1001);
\path[edge_cube_norm] (0010) to[] (0011);
\path[edge_cube_norm] (0010) to[] (0110);
\path[edge_cube_norm] (0010) to[] (1010);
\path[edge_cube_norm] (0011) to[] (0111);
\path[edge_cube_norm] (0011) to[] (1011);
\path[edge_cube_norm] (0100) to[] (0101);
\path[edge_cube_norm] (0100) to[] (0110);
\path[edge_cube_norm] (0100) to[] (1100);
\path[edge_cube_norm] (0101) to[] (0111);
\path[edge_cube_norm] (0101) to[] (1101);
\path[edge_cube_norm] (0110) to[] (0111);
\path[edge_cube_norm] (0110) to[] (1110);
\path[edge_cube_norm] (0111) to[] (1111);
\path[edge_cube_norm] (1000) to[] (1001);
\path[edge_cube_norm] (1000) to[] (1010);
\path[edge_cube_norm] (1000) to[] (1100);
\path[edge_cube_norm] (1001) to[] (1011);
\path[edge_cube_norm] (1001) to[] (1101);
\path[edge_cube_norm] (1010) to[] (1011);
\path[edge_cube_norm] (1010) to[] (1110);
\path[edge_cube_norm] (1011) to[] (1111);
\path[edge_cube_norm] (1100) to[] (1101);
\path[edge_cube_norm] (1100) to[] (1110);
\path[edge_cube_norm] (1101) to[] (1111);
\path[edge_cube_norm] (1110) to[] (1111);

\path[edge1] (0000) to[] (1000);
\path[edge1] (1000) to[] (1001);
\path[edge1] (0100) to[] (1100);
\path[edge1] (0010) to[] (1010);
\path[edge1] (1010) to[] (1110);
\path[edge1] (0001) to[] (0101);
\path[edge1] (0101) to[] (1101);
\path[edge1] (0110) to[] (0111);
\path[edge1] (0011) to[] (1011);
\path[edge1] (1011) to[] (1111);
\path[edge2] (0000) to[] (0100);
\path[edge2] (0100) to[] (0101);
\path[edge2] (0010) to[] (0110);
\path[edge2] (1000) to[] (1100);
\path[edge2] (1100) to[] (1110);
\path[edge2] (0001) to[] (0011);
\path[edge2] (0011) to[] (0111);
\path[edge2] (1010) to[] (1011);
\path[edge2] (1001) to[] (1101);
\path[edge2] (1101) to[] (1111);
\path[edge3] (0000) to[] (0010);
\path[edge3] (0010) to[] (0011);
\path[edge3] (1000) to[] (1010);
\path[edge3] (0100) to[] (0110);
\path[edge3] (0110) to[] (1110);
\path[edge3] (0001) to[] (1001);
\path[edge3] (1001) to[] (1011);
\path[edge3] (1100) to[] (1101);
\path[edge3] (0101) to[] (0111);
\path[edge3] (0111) to[] (1111);

%Write the nodes
\node[node_cube, label={[black, node_cube_label]below:$\emptyset$}] at (0000) {};
\node[node_cube, label={[black, node_cube_label]below:$\{4\}$}] at (0001) {};
\node[node_cube, label={[black, node_cube_label]below:$\{3\}$}] at (0010) {};
\node[node_cube, label={[black, node_cube_label]below:$\{3,\!4\}$}] at (0011) {};
\node[node_cube, label={[black, node_cube_label]below:$\{2\}$}] at (0100) {};
\node[node_cube, label={[black, node_cube_label]below:$\{2,\!4\}$}] at (0101) {};
\node[node_cube, label={[black, node_cube_label]below:$\{2,\!3\}$}] at (0110) {};
\node[node_cube, label={[black, node_cube_label]below:$\{2,\!3,\!4\}$}] at (0111) {};
\node[node_cube, label={[black, node_cube_label]below:$\{1\}$}] at (1000) {};
\node[node_cube, label={[black, node_cube_label]below:$\{1,\!4\}$}] at (1001) {};
\node[node_cube, label={[black, node_cube_label]below:$\{1,\!3\}$}] at (1010) {};
\node[node_cube, label={[black, node_cube_label]below:$\{1,\!3,\!4\}$}] at (1011) {};
\node[node_cube, label={[black, node_cube_label]below:$\{1,\!2\}$}] at (1100) {};
\node[node_cube, label={[black, node_cube_label]below:$\{1,\!2,\!4\}$}] at (1101) {};
\node[node_cube, label={[black, node_cube_label]below:$\{1,\!2,\!3\}$}] at (1110) {};
\node[node_cube, label={[black, node_cube_label]below:$\{1,\!2,\!3,\!4\}$}] at (1111) {};
\end{tikzpicture}
\caption{Hasse diagram of the 4-cube~$Q_4$, with three pairwise orthogonal chain decompositions into 6~chains, highlighted by thick solid, dashed, and dotted lines.}
\label{fig:Q4}
\end{figure}
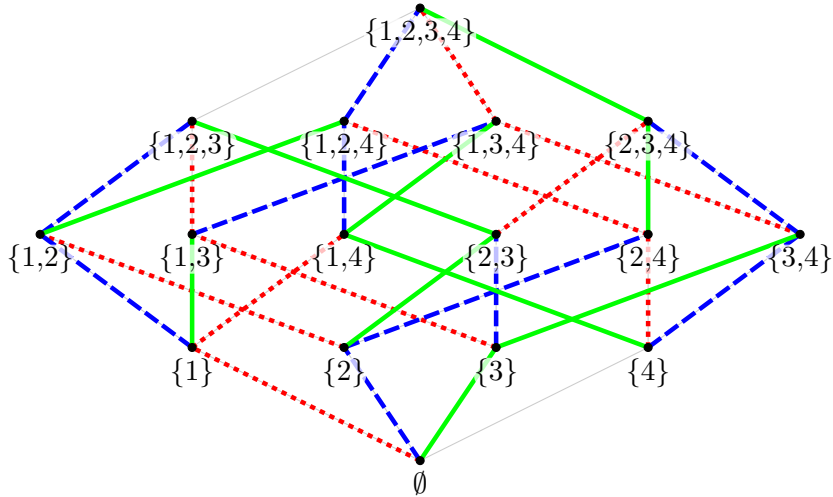

Clearly, $Q_n$ is a graded poset with rank function given by the set sizes, and every maximal chain has size~$n+1$.
We refer to the family of all subsets of a fixed size~$k\in\{0,\dotsc,n\}$ as the \emph{$k$th level} of~$Q_n$.
It is easy to see that $Q_n$ has a unique largest level $n/2$ for even~$n$, 
and two largest levels $\lfloor n/2\rfloor$ and $\lceil n/2\rceil$ for odd~$n$.
We refer to these levels as \emph{middle levels}.
Sperner's classical theorem~\cite{Sperner1928} asserts that each middle level is in fact a largest antichain of~$Q_n$, i.e., $Q_n$ has width $a_n:=\binom{n}{\lfloor n/2\rfloor}$.
As a consequence, at least $a_n$ many chains are needed to partition~$Q_n$, 
and by Dilworth's theorem~\cite{Dilworth1950}, a partition into this many chains indeed exists.
De Bruijn, van Ebbenhorst Tengbergen, and Kruiswijk~\cite{DeBruijnVETK1951} first described an inductive construction of a partition of~$Q_n$ into $a_n$ many chains that are all \emph{symmetric} and \emph{saturated}, i.e., every chain starts and ends in symmetric levels around the middle, and no chain skips any intermediate levels.
Throughout this paper, we will refer to their decomposition as the \emph{standard decomposition}.
Lewin~\cite{Lewin1972}, Aigner~\cite{Aigner1973}, and White and Williamson~\cite{WhiteWilliamson1977} gave alternative descriptions of the standard decomposition via greedy matching algorithms as well as explicit local rules to follow the chains in the standard decomposition.
The easiest-to-remember local rule using parenthesis matching was given by Greene and Kleitman~\cite{GreeneKleitman1976} (we will describe their rule in Section~\ref{sec:proof-unroll}).
The standard decomposition of~$Q_n$ was famously used by Kleitman~\cite{Kleitman1965} to prove the two-dimensional case of the Littlewood-Offord conjecture on signed sums of vectors~\cite{LittlewoodOfford1938} (later proved in all dimensions by Kleitman~\cite{Kleitman1970}).
% This is Kleitman's result: For given a complex number $z \in \cC$ and $b_1,\dotsc,b_n \in \cC$ with $|b_j| \ge 1$ for all $j$, there are at most $a_n=\binom{n}{\lfloor n/2\rfloor}$ vectors $\varepsilon \in \{-1,1\}^n$ such that $|\sum_{j=1}^n \varepsilon_j b_j - z| \leq 1$.

Shearer and Kleitman~\cite{ShearerKleitman1979} were the first to investigate chain decompositions of the $n$-cube that are different from the aforementioned standard decomposition.
They proved that, when picking subsets $x,y\subseteq [n]$ at random, the probability that $x\subseteq y$ is at least~$1/a_n$, for every probability distribution on~$Q_n$.
Their proof introduces the notion of orthogonal chain decompositions.
Formally, two decompositions of~$Q_n$ into $a_n$ (not necessarily symmetric or saturated) chains are called \emph{orthogonal} if every two chains from the two decompositions have at most a single element of~$Q_n$ in common.
For example, Figure~\ref{fig:Q4} shows three pairwise orthogonal chain decompositions into 6~chains in~$Q_4$.
Shearer and Kleitman conjectured that $Q_n$ admits $b_n:=\lfloor n/2\rfloor + 1$ pairwise orthogonal chain decompositions for all~$n\geq 1$.
As a warm-up exercise, we verified their conjecture for~$n\leq 7$ with computer help.
It is easy to check that there are at most $b_n$ pairwise orthogonal decompositions (consider the node degrees in the Hasse diagram around the middle levels).
% Fix some~$y\subseteq [n]$ on level~$b_n$, which for even~$n$ is one level above the unique middle level, and for odd~$n$ is the upper of the two middle levels.
% This~$y$ must be in some chain of each decomposition, and by the requirement that the decomposition must only use the minimum number~$a_n$ of chains, each such chain must also contain an~$x\subseteq [n]$ on the (lower) middle level~$b_n-1$.
% As there are only $b_n$ such pairs~$\{x,y\}$ with $x \subseteq y$ (for fixed~$y$), and each of them is contained in at most one of the decompositions, we obtain an upper bound of~$b_n$ on the number of pairwise orthogonal chain decompositions.

As a first step towards their conjecture, Shearer and Kleitman established the existence of \emph{two} orthogonal chain decompositions for all~$n\geq 2$.
They proved this by showing that the standard decomposition and its \emph{complement}, obtained by taking the complements of all sets with respect to the full set~$[n]$, are almost-orthogonal.
Formally, we say that two decompositions of~$Q_n$ into $a_n$ symmetric and saturated chains are \emph{almost-orthogonal} if every two chains from the two decompositions have at most a single element of~$Q_n$ in common, with the exception of the two unique chains of size~$n+1$, which are only allowed to intersect in their minimal and maximal elements~$\emptyset$ and~$[n]$.
It is straightforward to verify that for $n\geq 5$, every family of almost-orthogonal decompositions can be modified to orthogonal decompositions, by moving the empty set~$\emptyset$ in all but one of the decompositions from the unique longest chain to a shortest chain, one decomposition at a time (see \cite{ShearerKleitman1979,Spink2017} for details).

Recently, Spink~\cite{Spink2017} made the first progress towards the Shearer-Kleitman conjecture from~1979 by proving that $Q_n$ has \emph{three} pairwise orthogonal chain decompositions for~$n\geq 24$.
He actually showed that $Q_n$ has three almost-orthogonal decompositions into symmetric and saturated chains, from which the result follows as described before.

\subsection{Our results}

Using Spink's product construction, we improve on his result as follows.

\begin{theorem}
\label{thm:ortho}
For all~$n\geq 60$, the $n$-cube has four pairwise almost-orthogonal decompositions 
into symmetric and saturated chains, and consequently four pairwise orthogonal chain decompositions.
\end{theorem}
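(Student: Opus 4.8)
The plan is to reduce Theorem~\ref{thm:ortho} to a finite computer search via Spink's product construction, and then to upgrade the resulting almost-orthogonal decompositions to genuinely orthogonal ones by the $\emptyset$-relocation trick already recalled above. First I would set up the product construction. Recall that $Q_{a+b}\cong Q_a\times Q_b$, and that from a symmetric saturated chain $C$ of $Q_a$ and one, $C'$, of $Q_b$ one obtains the canonical decomposition of the grid $C\times C'$ into $\min(|C|,|C'|)$ symmetric saturated chains of $Q_{a+b}$; performing this for all pairs of chains from an SCD $\mathcal D$ of $Q_a$ and an SCD $\mathcal E$ of $Q_b$ yields an SCD $\mathcal D\otimes\mathcal E$ of $Q_{a+b}$. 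The key lemma I would isolate (this is essentially Spink's) is a compatibility statement: if $\mathcal D_1,\dots,\mathcal D_t$ are pairwise almost-orthogonal SCDs of $Q_a$, if $\mathcal E_1,\dots,\mathcal E_t$ are pairwise almost-orthogonal SCDs of $Q_b$, and if $b$ is small enough relative to $a$, then $\mathcal D_1\otimes\mathcal E_1,\dots,\mathcal D_t\otimes\mathcal E_t$ are pairwise almost-orthogonal SCDs of $Q_{a+b}$. I would state the precise size hypothesis and prove the lemma by a case analysis on which factors the two potentially overlapping chains project to, the delicate case being when they live in two ``parallel'' grids $C\times C'$ and $\tilde C\times C'$ sharing a common second-factor chain $C'$.

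The genuinely new ingredient is a base case: four pairwise almost-orthogonal SCDs of $Q_n$ for every $n$ in a suitable interval $\{n_0,n_0+1,\dots,n_1\}$ of consecutive starting dimensions. I would obtain these by computer search (e.g.\ a SAT encoding), made feasible by heavy symmetry breaking — in particular fixing the first decomposition to be the standard decomposition and the second to be its complement, which are automatically almost-orthogonal, so that only two further decompositions must be found; if needed, the base instances themselves can be bootstrapped from still smaller verified instances using the product lemma. Once four pairwise almost-orthogonal SCDs are available for a long enough interval of consecutive dimensions and the product lemma applies, a short numerical-semigroup argument propagates the conclusion to all $n\geq 60$: write $n=a+b$ with $b$ in the base interval and $a$ a previously handled dimension, using that an interval of consecutive dimensions whose length is at least its left endpoint generates, under repeated addition, a cofinite set of integers. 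Finally, since $60\geq 5$, the reduction from almost-orthogonal to orthogonal decompositions quoted earlier applies verbatim, moving $\emptyset$ out of the unique longest chain in three of the four decompositions, one at a time.

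The main obstacle is twofold. On the combinatorial side it is verifying the compatibility hypothesis of the product lemma for four decompositions simultaneously: when two chains of $\mathcal D_i\otimes\mathcal E_i$ and $\mathcal D_j\otimes\mathcal E_j$ sit in grids $C\times C'$ and $\tilde C\times C'$ with a shared second factor, the grids overlap in $|C\cap\tilde C|\cdot|C'|$ elements, and one must show the particular sub-chains selected by the grid SCD still meet in at most one element, handling the two longest chains (which are allowed to share their endpoints $\emptyset$ and $[n]$) separately. On the computational side it is keeping the base-case search within reach — the number of SCDs of $Q_n$ grows extremely fast — so the search must be organized around the fixed standard/complement pair and, where possible, around products of smaller verified instances; the precise threshold $60$ will emerge from how large a base interval the search, together with the size constraint in the product lemma, can deliver.
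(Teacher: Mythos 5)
Your overall strategy (product construction plus computer-found base cases plus a Frobenius-type argument, then the $\emptyset$-relocation) matches the paper's, but the key lemma you rely on is not the one that is actually available, and this is a genuine gap. The product statement you isolate -- ``if the factor families are pairwise almost-orthogonal and $b$ is small enough relative to $a$, then the grid products are pairwise almost-orthogonal'' -- is not Spink's theorem. Spink's result (Lemma~\ref{lem:prod-ortho}, i.e.\ \cite[Theorem~3.5]{Spink2017}) has no relative-size hypothesis; instead it requires the factor dimensions to be \emph{odd} and the factor families to be \emph{good} (the union of all size-2 chains forms a unicyclic graph), or alternatively at least six factors (\cite[Theorem~3.6]{Spink2017}, partly deferred to another paper). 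These hypotheses are not decoration: the naive componentwise grid product of almost-orthogonal SCDs can fail to be almost-orthogonal exactly in the case you flag as ``delicate'' -- when the two chains sit over longest chains in one or both coordinates, the factor intersections have size two, and the selected subchains can meet in two elements other than $\emptyset$ and $[n]$. Dismissing this as a case analysis underestimates it; handling it is the technical core of Spink's paper, and your substitute hypothesis is unfounded. A second, related gap is the iteration in your numerical-semigroup step: writing $n=a+b$ with $a$ ``previously handled'' presupposes that the product output again satisfies the lemma's hypotheses, but goodness is not claimed (and not true in general) for the output family. The paper sidesteps this by applying Spink's lemma to \emph{all} summands at once, so only the base families need to be good.

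This also changes the shape of the base case and explains the threshold. The paper needs only two base dimensions, the coprime odd numbers $7$ and $11$, each with a \emph{good} family of four almost-orthogonal SCDs (Lemma~\ref{lem:ortho}), and then $60=(7-1)(11-1)$ is exactly the Frobenius bound guaranteeing every $n\ge 60$ is a non-negative combination $7r_1+11r_2$ (with all summands odd, as the lemma requires). Your plan instead needs four almost-orthogonal SCDs for a whole interval of consecutive dimensions -- including even ones, which Spink's lemma does not accept as factors -- and leaves the origin of the number $60$ unexplained. It is also computationally unrealistic as stated: even for $n=11$ the authors could not search $Q_n$ directly but had to work in the necklace poset and prove unrolling lemmas (Lemmas~\ref{lem:unroll-GKS} and~\ref{lem:unroll-J}, Proposition~\ref{prop:compl}) to lift necklace SCDs to almost-orthogonal SCDs of $Q_n$; fixing the standard decomposition and its complement (which is what the GKS construction and Lemma~\ref{lem:unroll-GKS} accomplish) is consistent with the paper, but it does not by itself bring an interval of base dimensions within reach. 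The final step, converting almost-orthogonal to orthogonal decompositions for $n\ge 5$ by moving $\emptyset$, is correct and is the same as in the paper.
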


A slightly weaker notion than almost-orthogonality was introduced in a recent paper by Gregor, J\"ager, M\"utze, Sawada, and Wille~\cite{GregorJMSW18}.
We refer to any cover relation~$x\subseteq y$ as an \emph{edge}~$(x,y)$ ($y$ is one level above $x$), and we say that two decompositions of~$Q_n$ into $a_n$ symmetric and saturated chains are \emph{edge-disjoint} if the two decompositions do not share any edges.
Equivalently, the two decompositions form edge-disjoint paths in the cover graph of~$Q_n$, which is the graph formed by all cover relations.
By this definition, every pair of almost-orthogonal chain decompositions is edge-disjoint, but not necessarily vice versa.
The main application of edge-disjoint chain decompositions in~\cite{GregorJMSW18} was to construct cycle factors in subgraphs of~$Q_n$ induced by an interval of levels around the middle, with the goal of generalizing the recent proof of the middle levels conjecture by M\"utze~\cite{Muetze2016} (see also \cite{GregorMuetzeNummenpalo2018}).
It is also easy to check that $Q_n$ admits at most $b_n$ pairwise edge-disjoint chain decompositions.
The authors of~\cite{GregorJMSW18} conjectured that this bound can be achieved for all~$n\geq 1$.
They verified this conjecture for~$n\leq 7$, and proved that $Q_n$ has \emph{four} pairwise edge-disjoint decompositions for~$n\geq 12$.
We improve on this result as follows.

\begin{theorem}
\label{thm:edge}
For all~$n\geq 90$, the $n$-cube has five pairwise edge-disjoint decompositions into symmetric and saturated chains.
\end{theorem}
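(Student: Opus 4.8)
The plan is to deduce Theorem~\ref{thm:edge} from a product construction for symmetric chain decompositions (SCDs) together with a finite list of base cases checked by computer, in the same spirit as the proof of Theorem~\ref{thm:ortho}. The starting observation is that almost-orthogonality is stronger than edge-disjointness, so Theorem~\ref{thm:ortho} already supplies four pairwise edge-disjoint SCDs of $Q_n$ for every $n\ge 60$; the substance of Theorem~\ref{thm:edge} is to adjoin a fifth SCD edge-disjoint from all four. Rather than manufacturing this fifth decomposition by an isolated modification of known ones, I would build all five simultaneously through a single product lemma, so that the (weaker, but differently behaved) condition of edge-disjointness must be verified only inside small building blocks.

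For the product lemma, identify $Q_{n_1+n_2}$ with $Q_{n_1}\times Q_{n_2}$ and recall the classical fact that the Cartesian product $C\times D$ of a symmetric chain $C$ of $Q_{n_1}$ with a symmetric chain $D$ of $Q_{n_2}$ splits canonically into $\min(|C|,|D|)$ symmetric chains of $Q_{n_1+n_2}$; doing this for all pairs of chains from an SCD $\mathcal D$ of $Q_{n_1}$ and an SCD $\mathcal E$ of $Q_{n_2}$ gives an SCD $\mathcal D\otimes\mathcal E$ of $Q_{n_1+n_2}$. The statement to prove is: if $\mathcal D_1,\dots,\mathcal D_5$ are pairwise edge-disjoint SCDs of $Q_{n_1}$ and $\mathcal E_1,\dots,\mathcal E_5$ are pairwise edge-disjoint SCDs of $Q_{n_2}$, each chain carrying compatible auxiliary data that fixes how the products split (as in Spink's construction, and needed here both to make every $\mathcal D_i\otimes\mathcal E_i$ a well-defined SCD and to let the construction iterate), then $\mathcal D_1\otimes\mathcal E_1,\dots,\mathcal D_5\otimes\mathcal E_5$ are pairwise edge-disjoint. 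The proof is a short case analysis over the two kinds of cover relations of $Q_{n_1}\times Q_{n_2}$: a cover relation changing only the $Q_{n_1}$-coordinate lies on a product chain of $\mathcal D_i\otimes\mathcal E_i$ only if the underlying cover relation of $Q_{n_1}$ is a chain edge of $\mathcal D_i$, so a shared edge of this kind between $\mathcal D_i\otimes\mathcal E_i$ and $\mathcal D_j\otimes\mathcal E_j$ would descend to a shared chain edge of $\mathcal D_i$ and $\mathcal D_j$, which is excluded; cover relations changing only the $Q_{n_2}$-coordinate are handled symmetrically, and no cover relation is of both kinds.

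The base cases are then handled by encoding ``$Q_m$ has five pairwise edge-disjoint SCDs with the required auxiliary data'' as a Boolean satisfiability instance and solving it with a SAT solver such as MiniSat or Glucose for every $m$ in a suitable finite window; together with the product lemma, the availability of the construction for enough consecutive small values propagates to all large $n$ by elementary arithmetic, since every such $n$ is a sum $n_1+n_2$ with $n_1,n_2$ again in range (or recursively so), and the explicit bound $n\ge 90$ is whatever this arithmetic forces from the base cases that remain within reach of the solver. An alternative, more hands-on route, closer to that of Gregor, J\"ager, M\"utze, Sawada, and Wille, would be to describe a fifth SCD by an explicit local rule (building on the decompositions $\DGK$, $\DGKS$, $\DJo$ used there) and to verify the four new edge-disjointness conditions directly for $n\ge 90$; the product construction is preferable because it confines all the casework to small cubes.

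The main obstacle I anticipate is precisely the compatibility of the chain-splitting across the five factor decompositions: this is where Spink's machinery, designed for almost-orthogonality, has to be re-engineered for edge-disjointness, and identifying the right notion of compatible auxiliary data on the building blocks is what makes the product lemma go through. A secondary but real difficulty is computational: the SAT instances grow rapidly with $m$, so one must cover as many consecutive base values as possible with the smallest cubes one can still afford, and it is this trade-off that fixes the constant~$90$.
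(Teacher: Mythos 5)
Your overall route coincides with the paper's: a product lemma for edge-disjoint SCDs, computer-found base cases in small cubes, and Frobenius-type arithmetic to pass to all large $n$. Two comparisons are worth making. First, the ``compatible auxiliary data'' that you flag as the main anticipated obstacle is not needed: your own two-case analysis of the cover relations of $Q_{n_1}\times Q_{n_2}$ is already a complete proof that products of edge-disjoint SCDs are edge-disjoint, since every edge of a product chain projects onto a chain edge of one of the two factors. This is exactly Lemma~\ref{lem:prod-edge} (quoted from Gregor et al.), which---unlike Spink's product for almost-orthogonality, where the goodness hypothesis is genuinely required---carries no side conditions, so the worry dissolves and nothing from Spink's machinery has to be re-engineered.

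Second, and more substantively, the base cases are where the real work of the paper lies, and your plan for them is both different and, as stated, of doubtful feasibility. The paper does not search over a window of consecutive dimensions by encoding $Q_m$ directly; it uses exactly the two coprime dimensions $10$ and $11$, so that every $n\ge(10-1)(11-1)=90$ is a non-negative integer combination of them (this is where the constant $90$ comes from, not from a consecutive-window argument), and the SAT search is carried out not in the cube but in the necklace poset $N_n$ with edge capacities, which is smaller by a factor of about $n$ and which the paper stresses is crucial for the computation to succeed. That reduction brings its own difficulty which your proposal does not address: unrolling several edge-disjoint SCDs of $N_n$ to edge-disjoint SCDs of $Q_n$ can fail when $n$ is composite (Figure~\ref{fig:not-unrollable}), so the case $n=10$ needs the Jordan construction and Lemma~\ref{lem:unroll-J} together with incremental CNF augmentation, while $n=11$ is handled via Lemma~\ref{lem:prime-unimodal}. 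If you keep the direct encoding of $Q_{10}$ and $Q_{11}$, the logical structure of your argument is fine, but the bound $n\ge 90$ is only earned once those two specific base cases are actually completed, and the paper's evidence suggests that without the necklace reduction they are out of reach.
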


Unless stated otherwise, all chains we consider in the following are symmetric and saturated, and we will from now on omit those qualifications.
Moreover, we refer to any decomposition of~$Q_n$ into symmetric and saturated chains as an~\emph{SCD}.
Also, when referring to a family of pairwise almost-orthogonal or pairwise edge-disjoint SCDs, we will from now on omit the qualification `pairwise'.

\subsection{Small dimensions}

Table~\ref{tab:small} summarizes what is known for small values of~$n$.
Specifically, the table shows the maximum number of almost-orthogonal and edge-disjoint SCDs of~$Q_n$ that we know for~$n\leq 25$, together with the upper bound~$b_n$.
As indicated in the table, we actually found six edge-disjoint SCDs of~$Q_{11}$, which, using the product construction from~\cite{GregorJMSW18}, yields six edge-disjoint SCDs for all dimensions $n=11k$, $k \in \NN$.
To extend this result to all but finitely many dimensions, thus improving Theorem~\ref{thm:edge}, we would only need to find six edge-disjoint SCDs of~$Q_n$ for some dimension~$n$ not of this form.
It is also interesting to note that there are \emph{no} three almost-orthogonal SCDs of~$Q_4$ (see~\cite{Spink2017}), i.e., in this case the trivial upper bound~$b_n$ cannot be achieved.
Nevertheless, there are three orthogonal decompositions using non-symmetric chains in~$Q_4$---see Figure~\ref{fig:Q4}---so this shows that not every family of orthogonal chain decompositions can be obtained from almost-orthogonal SCDs.

\begin{table}[htb]
\centering
\newcounter{nodecount}
% Command for making a new node and naming it according to the nodecount counter
\newcommand\tabnode[1]{\addtocounter{nodecount}{1}\tikz \node (\arabic{nodecount}) {#1};}
\tikzstyle{every picture}+=[remember picture,baseline]
\tikzstyle{every node}+=[inner sep=0pt,anchor=base,minimum width=0.2cm,align=center,outer sep=4pt]
\def\arraystretch{1.2}
%\begin{tabular}{r}
\raggedright
\begin{tabular}{l|lllllllllll}
$n$	& 1 & 2 & 3 & 4 & 5 & 6 & 7 & 8 & 9 & 10 & 11 \\ \hline
almost-orthogonal SCDs &1	&2	&2	&2	&3	&3*	&\hspace{-1mm}\colorbox{gray!50}{4*}	&3*	&3*	&3	&\hspace{-1mm}\colorbox{gray!50}{4*}	\\
edge-disjoint SCDs &1	&2	&2	&3	&3	&4	&4	&\tabnode{4}	&\tabnode{4*}	&\hspace{-1mm}\colorbox{gray!50}{\tabnode{5*}}	&\hspace{-1mm}\colorbox{gray!50}{6*}	\\
upper bound $b_n=\lfloor n/2\rfloor+1$	&1	&2	&2	&3	&3	&4	&4	&\tabnode{5}	&\tabnode{5}	&\tabnode{6\hspace{2mm}}	&6	\\
\end{tabular}$\cdots$ \\[.5cm]
\raggedleft$\cdots$\begin{tabular}{lllllllllllllllllll}
12 & 13 & 14 & 15 & 16 & 17 & 18 & 19 & 20 & 21 & 22 & 23 & 24 & 25 \\ \hline
3  & 3* & 4* & 3  & 3* & 3  & 4* & 3  & 3  & 4* & 4* & 3* & 3  & 4* \\
4  & 4  & 4  & 4  & 4  & 4  & 4  & 4  & 5* & 5* & 6* & 4  & 4  & 4  \\
7  & 7  & 8  & 8  & 9  & 9  & 10 & 10 & 11 & 11 & 12 & 12 & 13 & 13 \\
% 13=6+7, 14=7+7, 16=7+9, 18=9+9, 20=10+10, 21=7+7+7, 22=11+11, 23=7+7+9, 25=11+7+7
% all our almost-orthogonal SCDs marked with * are good (n=6,7,9,11), except the ones for n=8
\end{tabular}
\begin{tikzpicture}[overlay]
\draw [thick,dotted,rounded corners](1.north west) -- (3.north east) -- (6.south east) -- (4.south west) -- cycle;
\end{tikzpicture}
%\end{tabular}
\caption{
Number of almost-orthogonal and edge-disjoint SCDs of~$Q_n$ we know for $n \leq 25$.
Entries marked with * are new compared to the earlier results from~\cite{Spink2017} and \cite{GregorJMSW18}.
For $n\leq 11$, the corresponding families of SCDs are provided electronically on the third authors' website~\cite{www} and on the arXiv~\cite{preprint}, and for the shaded entries they are also shown in Figures~\ref{fig:Q7_4_ortho}--\ref{fig:Q11_6_edge}.
For $n\geq 12$, they are obtained via the product constructions presented in~\cite{Spink2017} and~\cite{GregorJMSW18}.
The entries in the dotted box are explained in Remark~\ref{rem:lbounds}.}
\label{tab:small}
\end{table}

As the table shows, we can also slightly improve Spink's aforementioned result~\cite{Spink2017} that $Q_n$ has three almost-orthogonal SCDs for~$n\geq 24$.
His proof left only the dimensions $n=6,8,9,11,13,16,18,23$ as possible exceptions \cite[Theorem~3.3]{Spink2017} (for $n\geq 5$).
Using the SCDs shown in our table for $n\leq 11$ and Spink's product construction~\cite[Theorem~3.5]{Spink2017}, we can close all those gaps, and obtain that $Q_n$ has three almost-orthogonal SCDs for all $n\geq 5$, and three orthogonal chain decompositions for all $n\geq 4$, providing some more evidence for the Shearer-Kleitman conjecture.
We also see that $Q_n$ has four edge-disjoint SCDs for all $n\geq 6$, ruling out the two possible exceptions $n=9,11$ left by Gregor et al.~\cite{GregorJMSW18}.

\begin{remark}
\label{rem:lbounds}
Our lower bounds for edge-disjoint SCDs differ from the upper bound~$b_n$ by~1 exactly for the dimensions~$n=8,9,10$; see the values in the dotted box in Table~\ref{tab:small}.
In fact, it can be shown that our approach for finding edge-disjoint SCDs via the necklace poset~$N_n$ yields at most~$b_n-1$ edge-disjoint SCDs of~$Q_n$ for all even~$n$ and for~$n=9$ (see Lemma~\ref{lem:cyclic-bound} below), so our methods cannot yield better lower bounds for those cases.
\end{remark}

\subsection{Proof ideas}

We now outline the main ideas for proving Theorems~\ref{thm:ortho} and~\ref{thm:edge}.

\paragraph{Product constructions}

We compute families of~$s=4$ almost-orthogonal and $s=5$ edge-disjoint SCDs, for two cubes $Q_a$ and $Q_b$ of small coprime dimensions~$a$ and $b$.
Specifically, these dimensions will be $(a,b)=(7,11)$ and $(a,b)=(10,11)$, respectively; see the shaded entries in Table~\ref{tab:small}.
Using the product constructions presented in~\cite{Spink2017} and~\cite{GregorJMSW18}, we then obtain $s$~SCDs of the corresponding type for all dimensions~$n$ for which $n$ is a non-negative integer combination of~$a$ and~$b$, in particular for all $n\geq (a-1)(b-1)$.
This evaluates to~$n\geq 60$ and~$n\geq 90$ for the aforementioned pairs~$(a,b)$, respectively.

\paragraph{Problem reduction via the necklace poset}

To find families of SCDs in cubes of small fixed dimension ($n=7$, 10, and~11) that satisfy the desired constraints, we reduce the search space to a much smaller poset, the so-called \emph{necklace poset} $N_n$; see Figure~\ref{fig:NQ5}.
It is obtained from~$Q_n$ by identifying all subsets that differ only in cyclically renaming the elements of the ground set $1\rightarrow 2\rightarrow \cdots\rightarrow n\rightarrow 1$.
The necklace poset~$N_n$ inherits the level structure from~$Q_n$, and notions such as symmetric chains and SCDs translate to it in a natural way.
Moreover, $N_n$ is by a factor of~$n(1-o(1))$ smaller than~$Q_n$, which turns out to be crucial for our computer searches for SCDs.
We refer to the process of translating an SCD computed in~$N_n$ to~$Q_n$ as \emph{unrolling}.
Unrolling essentially creates $n$ copies of each chain in~$N_n$, and these copies are obtained by cyclic renaming as explained before.
This strategy works particularly well when $n$ is a prime number, and with some adjustments it can also be made to work for composite~$n$.
We also introduce a suitable notion of edge multiplicities for the necklace poset (as indicated in Figure~\ref{fig:NQ5}), which allows us to find multiple edge-disjoint SCDs in~$N_n$ simultaneously, and to unroll them to multiple edge-disjoint SCDs in~$Q_n$.
Specifically, we prove that the two constructions of SCDs in~$N_n$ found by Griggs, Killian, and Savage~\cite{GriggsKillianSavage2004} and by Jordan~\cite{Jordan2010} can be unrolled to almost-orthogonal SCDs in~$Q_n$.
The key steps here are Lemmas~\ref{lem:unroll-GKS} and~\ref{lem:unroll-J} and Proposition~\ref{prop:compl} below.

\paragraph{Using SAT solvers}

To search multiple edge-disjoint SCDs in the necklace poset~$N_n$ for some small fixed dimension~$n$, we formulate the problem as a propositional formula in conjunctive normal form (CNF), and compute solutions using the SAT solvers Glucose~\cite{AudemardLS2013} and MiniSat~\cite{EenSorensen2003}.
In our CNF formula, we use Boolean variables that indicate whether certain nodes and edges belong to a particular SCD, and we introduce clauses ensuring that a satisfying variable assignment indeed corresponds to an unrollable SCD, and that multiple SCDs are edge-disjoint.
Once a valid variable assignment is found, we use incremental CNF augmentation to enforce the remaining properties, in particular almost-orthogonality of the unrolled SCDs in~$Q_n$.
Specifically, if we encounter a violation, we add an additional clause that prevents this particular configuration.
We solve the augmented CNF using an incremental SAT solver, until we either find a feasible solution or obtain a formula with no satisfying assignment.
This approach keeps the size of the generated CNFs and of the computation time small, as the solvers can reuse structural information of the CNFs, rather than recomputing a solution from scratch.
The size of the formulas can be reduced further by prescribing some SCDs to be particularly nice decompositions.

\subsection{Related work}

\paragraph{Other chain decompositions}

There is a considerable amount of literature on partitioning the $n$-cube using possibly non-symmetric and/or non-saturated chains.
One of the most interesting open problems in this direction is a well-known conjecture of F\"uredi~\cite{Furedi1985} (cf.~\cite{Griggs1988}), which asserts that $Q_n$ can be decomposed into $a_n$ (not necessarily symmetric or saturated) chains whose sizes differ by at most~1, so their size is~$2^n/a_n$ rounded up or down, which is approximately $\sqrt{\pi n}(1+o(1))$.
Tomon~\cite{Tomon2015} recently made some progress towards this conjecture, by showing that for large enough~$n$, the $n$-cube can be decomposed into $a_n$~chains whose size is between~$0.8\sqrt{n}$ and~$13\sqrt{n}$.
Another remarkable result, recently shown by Gruslys, Leader, and Tomon~\cite{GruslysLeaderTomon2019}, is that for large enough~$n$, the $n$-cube can be partitioned into copies of any fixed poset~$P$, provided that the number of elements of~$P$ is a power of~2 and that $P$ has a unique minimal and maximal element.

Pikurkho~\cite{Pikhurko1999} showed that all edges of the $n$-cube can be partitioned into symmetric chains, but it is not clear whether some of those chains can be selected to form one or more~SCDs.
In a slightly different direction, Streib and Trotter~\cite{StreibTrotter2014} presented the construction of an~SCD of the $n$-cube that can be extended to a Hamiltonian cycle through the entire cover graph.

The existence and construction of SCDs has also been investigated for many graded posets different from~$Q_n$.
The paper~\cite{DeBruijnVETK1951} proves that divisor lattices, which are products of chains, are symmetric chain orders,
and Griggs~\cite{Griggs1977} gave a sufficient condition for a general graded poset to admit an SCD.

Griggs, Killian, and Savage first constructed an explicit SCD of the necklace poset~$N_n$ \cite{GriggsKillianSavage2004} when the dimension~$n$ is a prime number, with the goal of constructing rotation-symmetric Venn diagrams for $n$~curves in the plane (see~\cite{RuskeySavageWagon2006}).
Their result for~$N_n$ with $n$ prime was later generalized by Jordan~\cite{Jordan2010} to all~$n \in \NN$, and to even more general quotients of~$Q_n$ by Duffus, McKibben-Sanders, and Thayer~\cite{DuffusMcKibbenSandersThayer2012}.
All these constructions in the necklace poset proceed by taking suitable subchains from the standard SCD of~$Q_n$.
Further generalizations of these results can be found in \cite{Dhand2012,HershSchilling2013,DuffusThayer2015}.

\paragraph{SAT solvers in combinatorics}

We conclude this section by listing some recent results where SAT solvers were used successfully to tackle difficult problems in (extremal) combinatorics, either by using them to find a solution, or to prove that no solution exists.
Fujita~\cite{Fujita2012} established a new lower bound~$R(4,8) \geq 58$ for the classical Ramsey numbers.
Similarly, Dransfield, Liu, Marek, and Truszczy{\'{n}}ski~\cite{DransfieldLMT2004} derived improved bounds for van der Waerden numbers (see also \cite{HerwigHvLvM2007} and \cite{KourilPaul2008}).
Another recent result that received considerable attention is the paper by Konev and Lisitsa~\cite{KonevLisitsa2014} on the {E}rd\H{o}s discrepancy conjecture.
SAT solvers have also been used in the context of geometry, specifically for tackling Erd\H{o}s-Szekeres type questions, see the papers by Balko and Valtr~\cite{BalkoValtr2017} and by Scheucher~\cite{Scheucher2018}.
Moreover, with their help researchers were able to find new coil-in-the-box Gray codes~\cite{ZinovikKroeningChebiryak2008} and to compute pairs of orthogonal diagonal Latin squares~\cite{ZaikinKochemazovSemenov2016}.

% graph drawing:
% Mütze and Scheucher~\cite{MuetzeScheucher2018} computed point sets and trees that cannot be embedded in an L-shaped way
% Scheucher, Schrezenmaier, and Steiner~\cite{ScheucherSchrezenmaierSteiner2018} answered questions concerning the size of a point set into which every graph from a given family can be embedded in a prescribed way

% packing chromatic number:
% Martin and others~\cite{MartinRCM2017} established new bounds for the packing chromatic number of the infinite square lattice

\subsection{Outline of this paper}

In Section~\ref{sec:proofs} we present the proofs of our two main theorems.
The proofs of two crucial lemmas, which settle the base cases for our construction, are deferred to Section~\ref{sec:small} at the end of the paper.
In Section~\ref{sec:unroll} we explain our reduction technique to produce SCDs of~$Q_n$ by working in the much smaller necklace poset~$N_n$, and in Section~\ref{sec:sat} we describe how to exploit this reduction using a SAT solver.

\section{Product constructions implying Theorems~\ref{thm:ortho} and~\ref{thm:edge}}
\label{sec:proofs}

As already mentioned in the introduction, both of our theorems are proved by applying product constructions established in~\cite{Spink2017} and~\cite{GregorJMSW18}, respectively, which allow us to obtain $s$ almost-orthogonal or edge-disjoint SCDs of~$Q_{a+b}$, given $s$ such SCDs in the smaller cubes~$Q_a$ and~$Q_b$.
In the following we will repeatedly use the basic number-theoretic fact that, if~$a$ and~$b$ are coprime integers, then every integer $n\geq (a-1)(b-1)$ is a non-negative integer combination of~$a$ and~$b$.

\subsection{Proof of Theorem~\ref{thm:ortho}}
\label{sec:proof-ortho}

The product construction for almost-orthogonal SCDs requires an additional property that we now define:
A family of almost-orthogonal SCDs of the $n$-cube for some odd~$n$ is called \emph{good} if the union of edges given by all chains of size~2 from all the decompositions forms a unicyclic graph, i.e., a graph all of whose components contain at most a single cycle.
The following crucial statement was proved in~\cite{Spink2017}.

\begin{lemma}[{\cite[Theorem~3.5]{Spink2017}}]
\label{lem:prod-ortho}
Let $s\geq 3$ and $r\geq 2$ be integers, and let $n_1,\dotsc,n_r\geq 3$ be a sequence of odd integers.
If each $Q_{n_i}$, $1\leq i\leq r$, has a good family of~$s$ almost-orthogonal SCDs, then $Q_{n_1+\dotsb+n_r}$ has $s$ almost-orthogonal SCDs.
\end{lemma}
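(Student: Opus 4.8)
The plan is to realise each of the $s$ required SCDs of $Q_n$, $n:=n_1+\dots+n_r$, as a ``product'' of the $s$ given families, one factor per coordinate, and to verify almost-orthogonality by pushing a hypothetical conflict down to the individual factors, where the \emph{good} hypothesis turns out to be exactly what is needed. Concretely, I would fix a partition $[n]=B_1\sqcup\dots\sqcup B_r$ with $|B_k|=n_k$, which via $A\mapsto(A\cap B_1,\dots,A\cap B_r)$ identifies $Q_n$ with $Q_{n_1}\times\dots\times Q_{n_r}$, and for $1\le i\le s$ let $\mathcal C^i_k$ be the $i$-th SCD in the good family of $Q_{n_k}$. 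For each $i$, the \emph{boxes} $C_1\times\dots\times C_r$ with $C_k\in\mathcal C^i_k$ partition $Q_n$; each box is a product of $r$ chains, so by the theorem of de~Bruijn, van~Ebbenhorst~Tengbergen and Kruiswijk~\cite{DeBruijnVETK1951} on products of chains it decomposes into symmetric chains, and since every $C_k$ is symmetric the box is symmetrically embedded, making these chains symmetric and saturated in $Q_n$. Fixing a decomposition of every box into symmetric chains (by the rule specified below) assembles into an SCD $\mathcal E^i$ of $Q_n$, so we obtain $s$ SCDs; the whole issue is to choose the box decompositions so that $\mathcal E^1,\dots,\mathcal E^s$ become almost-orthogonal.

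\emph{Reduction to the factors.} Suppose $E\in\mathcal E^i$ and $F\in\mathcal E^j$ with $i\ne j$ share two distinct elements $x\subsetneq y$. Since each chain of $\mathcal E^i$ lies in a single box, $E\subseteq C^i_1\times\dots\times C^i_r$ and $F\subseteq C^j_1\times\dots\times C^j_r$ for suitable $C^i_k\in\mathcal C^i_k$, $C^j_k\in\mathcal C^j_k$; writing $x=(x_k)_k$, $y=(y_k)_k$ gives $x_k,y_k\in C^i_k\cap C^j_k$ for all $k$, hence $\prod_k|C^i_k\cap C^j_k|\ge2$. So some coordinate $k$ has $|C^i_k\cap C^j_k|\ge2$, which by almost-orthogonality of the $k$-th given family forces $C^i_k$ and $C^j_k$ to be the two (distinct) size-$(n_k{+}1)$ longest chains of $Q_{n_k}$ with $C^i_k\cap C^j_k=\{\emptyset,[n_k]\}$, and at every coordinate where $x$ and $y$ differ this must happen, with $x_k=\emptyset$ and $y_k=[n_k]$. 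I would then run a case analysis, using that each $n_k$ is odd — so every chain of $Q_{n_k}$ has an even number of elements, the shortest being $2$-chains joining the two middle levels — together with a count of the levels that $E$ occupies inside its box, to conclude that the only conflicts that can survive are: (a) the two longest chains of $Q_n$; and (b) a conflict in which, in some coordinate $k$, $C^i_k$ and $C^j_k$ are a $2$-chain sharing a vertex, while in some other coordinate $\ell$ they are the two longest chains of $Q_{n_\ell}$.

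\emph{Resolving the two types; where \emph{good} enters.} For (a): decompose every ``all-longest'' box with the standard SCD of the grid $P_{n_1+1}\times\dots\times P_{n_r+1}$, whose unique longest chain avoids all corners of the grid other than its endpoints $\emptyset$ and $[n]$ (this uses $n_k\ge3$); for $i\ne j$ the two all-longest boxes are distinct and their intersection is exactly the set of corners, since the longest chains of $\mathcal C^i_k$ and $\mathcal C^j_k$ meet in $\{\emptyset,[n_k]\}$ for every $k$, so the longest chains $E,F$ meet only in $\{\emptyset,[n]\}$ — precisely the exception that almost-orthogonality permits. For (b): restricted to the coordinates $k,\ell$, the box is a grid $P_2\times P_{n_\ell+1}$, which has exactly two decompositions into symmetric chains; writing the $2$-chain as an edge $\{u,w\}$ between the middle levels of $Q_{n_k}$ with $u$ below $w$, call these $L$ (long chain $=\{u\}\times P_{n_\ell+1}$ plus the top element of $\{w\}\times P_{n_\ell+1}$) and $R$ (its mirror image). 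A direct computation shows that for the given $i\ne j$ the long chains of two such boxes share two elements exactly when the $2$-chains meet at their bottom vertex and are both decomposed by $L$, or meet at their top vertex and are both decomposed by $R$. So it suffices to label every $2$-chain appearing in any of the $s$ SCDs of $Q_{n_k}$ by $L$ or $R$ such that no two sharing a bottom vertex are both $L$ and no two sharing a top vertex are both $R$; orienting an $L$-labelled $2$-chain towards its bottom vertex and an $R$-labelled one towards its top vertex, this is precisely the requirement that the bipartite graph $G_k$ formed by all these $2$-chains carry an orientation with in-degree at most $1$ at every vertex, which is possible if and only if every component of $G_k$ has at most one cycle — i.e., if and only if the family of $Q_{n_k}$ is \emph{good}. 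Doing this for every $k$ and taking the standard decomposition of all remaining boxes yields $\mathcal E^1,\dots,\mathcal E^s$ in which no chain of one and chain of another share two elements, except that the two longest chains meet in $\{\emptyset,[n]\}$; that is, $s$ almost-orthogonal SCDs of $Q_n$.

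\emph{Main obstacle.} The crux is the reduction step: one has to actually carry out the case analysis over all shapes of two boxes that a common pair $x\subsetneq y$ can inhabit, use the oddness of the $n_k$ to rule out every conflict except (a) and (b), \emph{and} check that the ``anti-parallel'' split forced at one coordinate never clashes with what the other coordinates demand — which it cannot, since the only possible obstruction is the parity of a cycle in some $G_k$, and that is exactly what the \emph{good} hypothesis bounds. The remaining pieces — the corner-avoidance property of the standard grid SCD, the orientation lemma (``orientable with in-degree $\le 1$ iff every component has at most one cycle''), and the level bookkeeping — are routine.
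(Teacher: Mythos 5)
First, note that the paper does not prove Lemma~\ref{lem:prod-ortho} at all: it is imported verbatim from Spink~\cite[Theorem~3.5]{Spink2017}, so there is no in-paper proof to compare against, and what you have written is an attempted reconstruction of Spink's product construction. The skeleton of your plan is reasonable (boxes $C_1\times\dots\times C_r$, pushing a two-element conflict down to the factors, and reading goodness as the statement that the graph of $2$-chains admits an orientation with in-degree at most $1$), but the pivotal step --- ``the only conflicts that can survive are (a) and (b)'' --- is only asserted, and it is false for the construction you actually specify. If the ``remaining'' boxes are given the standard (hook) decomposition, then in a box $C\times D$ where $D$ is the longest chain of its SCD of $Q_{n_\ell}$ and $|C|\ge 4$, the outermost hook contains the entire fibre $\{v\}\times D$ at the exposed endpoint $v$ of $C$; if $v$ is also the exposed endpoint of the chain through $v$ in a second decomposition $j$, the two resulting chains of $Q_n$ share the two elements $(v,\emptyset)$ and $(v,[n_\ell])$, and nothing in almost-orthogonality prevents two chains from different SCDs from having a common endpoint. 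Thus the orientation problem you must solve involves \emph{all} chains, not just the $2$-chains, and goodness (which constrains only the $2$-chains) is then insufficient. To make your reduction to $2$-chains correct you would have to decompose every box whose $Q_{n_k}$-factor has at least $4$ elements by an SCD of the grid that contains no full fibre in the longest-chain directions; such decompositions are not the standard ones, and constructing them (and proving they exist in the required generality) is precisely the missing content, not ``routine bookkeeping''.

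Two further gaps of the same nature. For case (a) you invoke ``the standard SCD of the grid $P_{n_1+1}\times\dots\times P_{n_r+1}$, whose unique longest chain avoids all corners other than its endpoints''; this is false already for $r=2$, where the standard decomposition is the hook decomposition and its longest chain passes through an off-corner, so here too a special decomposition has to be built and verified. And for $r\ge 3$ the conflicts are indexed by arbitrary subsets $S$ of coordinates: the two chains must contain the two antipodal corners of a sub-box $\prod_{k\notin S}\{v_k\}\times\prod_{k\in S}C_k$ with all $C_k$, $k\in S$, longest chains. A chain of an $r$-dimensional box can contain such a corner pair without containing any two-dimensional full fibre, so your two-coordinate analysis of type (b) (and the fact that $P_2\times P_{n_\ell+1}$ has exactly two SCDs) does not control the $r$-dimensional boxes, and the claimed non-interaction between the choices forced by different sub-boxes of the same box is exactly what has to be proved. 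As it stands, the proposal is a plausible outline pointing in the direction of Spink's argument, but it is not a proof of the lemma.
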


The base case for applying Lemma~\ref{lem:prod-ortho} is the following result, which will be proved in Section~\ref{sec:small}.

\begin{lemma}
\label{lem:ortho}
The cubes~$Q_7$ and~$Q_{11}$ each have four good almost-orthogonal SCDs.
\end{lemma}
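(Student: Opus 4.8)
The plan is to settle Lemma~\ref{lem:ortho} by an explicit computer-assisted construction, exhibiting four concrete SCDs of~$Q_7$ and four of~$Q_{11}$ and verifying by hand (or by a short certified check) that they have all the required properties. Since both $7$ and $11$ are odd primes, the reduction technique via the necklace poset applies in its cleanest form: it suffices to find four suitable SCDs in the necklace poset $N_7$ (respectively $N_{11}$) together with the edge-multiplicity bookkeeping described in Section~\ref{sec:unroll}, and then unroll each one to an SCD of~$Q_7$ (resp.\ $Q_{11}$) by taking its $n$ cyclic copies. The conditions to be checked on the unrolled family are: (i) each unrolled family is a valid SCD, i.e.\ the cyclic copies of the necklace chains partition the levels of~$Q_n$ -- this is automatic when $n$ is prime and the necklace SCD is genuine; (ii) pairwise almost-orthogonality, i.e.\ any two chains from two different decompositions meet in at most one vertex, except the two longest chains which may meet only in $\emptyset$ and $[n]$; and (iii) goodness, i.e.\ the union over all four decompositions of the edges lying on chains of size~$2$ forms a unicyclic graph.

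The key steps, in order, are as follows. First, formulate the search for four edge-disjoint (hence, after post-processing, almost-orthogonal) SCDs of $N_n$ as a CNF as outlined in Section~\ref{sec:sat}: Boolean variables encode membership of nodes and edges of~$N_n$ in each of the four decompositions, hard clauses enforce the matching/partition structure of an SCD in each layer together with the edge-multiplicity constraints that guarantee unrollability, and further clauses enforce pairwise edge-disjointness. Second, run Glucose/MiniSat on this CNF for $n=7$ and $n=11$; to keep the formula small one can prescribe one of the four SCDs to be the standard decomposition and a second to be its complement (these are already known to be almost-orthogonal by Shearer-Kleitman), so the solver only needs to find two more decompositions compatible with those and with each other. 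Third, take the (partial) solution and use incremental CNF augmentation: unroll to $Q_n$, check almost-orthogonality vertex by vertex and goodness of the size-$2$ edge graph, and whenever a violation is found add a blocking clause and resolve, iterating until a feasible family emerges. Fourth, once a feasible family is obtained, record the four SCDs explicitly (they are tabulated in Figures~\ref{fig:Q7_4_ortho} and following, and provided electronically~\cite{www,preprint}), and present a direct, solver-independent verification that (i)--(iii) hold -- this is a finite check that can in principle be done by hand for $Q_7$ and by a short independent script for $Q_{11}$.

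The main obstacle I anticipate is not the SAT search itself but guaranteeing \emph{goodness} simultaneously with almost-orthogonality for all four decompositions at once. Almost-orthogonality is a pairwise, local condition that the solver handles naturally, but goodness is a global condition on the union of the short-chain edges across all four SCDs, and it is known (Remark~\ref{rem:lbounds}, and the fact that there are no three almost-orthogonal SCDs of~$Q_4$) that these extremal constructions are delicate; the unicyclicity requirement essentially forces the size-$2$ chains to be chosen very economically. Concretely, in $Q_{11}$ the middle two levels have $\binom{11}{5}=\binom{11}{6}=462$ vertices each and the short chains are the edges between levels $5$ and $6$ that are not covered by longer chains, so controlling how these edges glue together over four decompositions is the crux. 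I would handle this by adding the unicyclicity constraint incrementally -- detect any component containing two independent cycles in the unrolled size-$2$ edge graph and forbid it -- rather than encoding it up front, and by exploiting symmetry (fixing the standard SCD and its complement) to cut down the remaining degrees of freedom. A secondary, purely logistical obstacle is that $11$ is large enough that the unrolled object lives in a $2^{11}$-vertex poset, so the incremental checks must be implemented carefully; but since $11$ is prime the necklace-to-cube correspondence is a clean $n$-to-$1$ cover with no exceptional orbits, which keeps both the encoding and the verification manageable.
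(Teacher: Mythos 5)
Your plan is essentially the paper's own proof: the authors also fix one SCD of~$N_n$ and its complement to be a ``nice'' pair (namely the Griggs--Killian--Savage SCD, unrolled via Lemma~\ref{lem:unroll-GKS}), search for the remaining pair with an incremental SAT encoding in the necklace poset as in Section~\ref{sec:sat}, and then exhibit the resulting families (Figures~\ref{fig:Q7_4_ortho} and~\ref{fig:Q11_4_ortho}) with an explicit verification of almost-orthogonality and goodness, the latter in fact holding without extra clauses. The only imprecision is your appeal to Shearer--Kleitman for the fixed pair: inside the necklace framework one cannot fix the Greene--Kleitman standard decomposition itself (its chains are not rotation-invariant), and the unrolled GKS SCD together with its complement needs the paper's Lemma~\ref{lem:unroll-GKS} (via Proposition~\ref{prop:compl}) rather than the original Shearer--Kleitman result --- harmless here, since your final step verifies the whole family directly.
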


\begin{proof}[Proof of Theorem~\ref{thm:ortho}]
As every integer $n\geq (7-1)(11-1)=60$ is a non-negative integer combination of~10 and~11, we can apply Lemmas~\ref{lem:prod-ortho} and~\ref{lem:ortho} to obtain the desired SCDs.
\end{proof}

Spink~\cite[Theorem~3.6]{Spink2017} also proved that the goodness requirement in Lemma~\ref{lem:prod-ortho} can be omitted if the additional condition~$r\geq 6$ is added.
As every integer~$n\geq 60$ is a non-negative integer combination of~7 and~11 with coefficients that sum up to at least~6, we would not need the families of SCDs of~$Q_7$ and~$Q_{11}$ to be good to prove Theorem~\ref{thm:ortho}.
However, since proving this modified version of Lemma~\ref{lem:prod-ortho} is considerably harder, partially deferred to another paper~\cite{DavidSpinkTiba2018}, and since goodness is not hard to achieve on top of almost-orthogonality, we prefer to stick with Lemma~\ref{lem:prod-ortho} in its stated form.
Moreover, in this form the lemma also yields four almost-orthogonal SCDs for all non-negative integer combinations of~7 and~11 that are smaller than~60.

\subsection{Proof of Theorem~\ref{thm:edge}}
\label{sec:proof-edge}

The following product lemma for edge-disjoint SCDs was proved in~\cite{GregorJMSW18}.

\begin{lemma}[{\cite[Theorem~5]{GregorJMSW18}}]
\label{lem:prod-edge}
If~$Q_a$ and~$Q_b$ each have $s$ edge-disjoint SCDs, then $Q_{a+b}$ has $s$ edge-disjoint SCDs.
\end{lemma}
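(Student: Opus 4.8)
The plan is to use the classical product construction for symmetric chain orders, which realizes $Q_{a+b}$ as the product poset $Q_a\times Q_b$, and to keep careful track of edges through this construction. I would fix the identification $Q_{a+b}\cong Q_a\times Q_b$ obtained by splitting the ground set $[a+b]$ into its first $a$ and its last $b$ elements: a subset $S\subseteq[a+b]$ corresponds to the pair $(S\cap[a],\,S\setminus[a])$. Under this identification, every edge of $Q_{a+b}$ --- i.e.\ every cover relation, which adds exactly one element --- is of exactly one of two types: a \emph{horizontal} edge $\big((x,z),(y,z)\big)$ with $(x,y)$ an edge of $Q_a$ and $z$ fixed, or a \emph{vertical} edge $\big((x,z),(x,w)\big)$ with $(z,w)$ an edge of $Q_b$ and $x$ fixed; moreover, the type of the edge and the underlying edge of $Q_a$ or of $Q_b$ are uniquely determined by the edge of $Q_{a+b}$.

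Next I would recall the decomposition of a product of two chains into symmetric chains due to de~Bruijn, van~Ebbenhorst~Tengbergen, and Kruiswijk~\cite{DeBruijnVETK1951}: if $C$ and $D$ are (saturated, symmetric) chains in $Q_a$ and $Q_b$, then the subposet $C\times D$ of $Q_a\times Q_b$ --- a grid --- splits canonically into symmetric and saturated chains of $Q_{a+b}$. The point I would make explicit is that any saturated chain of $Q_{a+b}$ contained in $C\times D$ uses only cover relations of the subposet $C\times D$ (consecutive elements differ in rank by $1$, and the rank in the subposet agrees with the rank in $Q_{a+b}$), and the cover relations of a product of two saturated chains are exactly the \emph{grid edges}: either a horizontal edge whose $Q_a$-part is a chain edge of $C$, or a vertical edge whose $Q_b$-part is a chain edge of $D$. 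Given $s$ edge-disjoint SCDs $\mathcal C_1,\dots,\mathcal C_s$ of $Q_a$ and $s$ edge-disjoint SCDs $\mathcal D_1,\dots,\mathcal D_s$ of $Q_b$, I would define, for each $i\in[s]$, the SCD $\mathcal E_i$ of $Q_{a+b}$ as the union over all $C\in\mathcal C_i$ and all $D\in\mathcal D_i$ of the symmetric chains obtained from decomposing $C\times D$. Since $\{C\times D : C\in\mathcal C_i,\ D\in\mathcal D_i\}$ partitions $Q_a\times Q_b\cong Q_{a+b}$, each $\mathcal E_i$ is indeed an SCD.

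It then remains to show that $\mathcal E_1,\dots,\mathcal E_s$ are pairwise edge-disjoint, and this is where the bookkeeping pays off. Suppose $\mathcal E_i$ and $\mathcal E_j$ with $i\neq j$ shared an edge $e$ of $Q_{a+b}$. By the previous paragraph, every edge occurring in $\mathcal E_i$ is horizontal with $Q_a$-part a chain edge of some $C\in\mathcal C_i$, or vertical with $Q_b$-part a chain edge of some $D\in\mathcal D_i$; the same holds for $\mathcal E_j$. Since the type of $e$ and its underlying $Q_a$- or $Q_b$-edge are uniquely determined, if $e$ is horizontal then its $Q_a$-part is a chain edge used by both $\mathcal C_i$ and $\mathcal C_j$, contradicting the edge-disjointness of the SCDs of $Q_a$; and if $e$ is vertical, we likewise contradict the edge-disjointness of the SCDs of $Q_b$. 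Hence no such $e$ exists, which proves the lemma.

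The main obstacle --- really the only point that is not routine --- is the claim about which edges the grid decomposition of $C\times D$ can use. One can either invoke an explicit description of the symmetric chains in a product of two chains (for instance via ballot sequences / bracket matching), or, as sketched above, argue structurally that a saturated chain lying inside the grid subposet $C\times D$ of $Q_{a+b}$ can only traverse cover relations of that grid, each of which changes a single coordinate by one step along $C$ or along $D$. Everything else --- the identification $Q_{a+b}\cong Q_a\times Q_b$, the fact that the product construction yields an SCD, and the final contradiction --- is immediate.
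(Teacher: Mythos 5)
Your proof is correct: the identification $Q_{a+b}\cong Q_a\times Q_b$, the decomposition of each product $C\times D$ of symmetric chains into symmetric saturated chains, and the observation that every cover relation of $Q_{a+b}$ with both endpoints in $C\times D$ is a grid edge (so each chain edge projects to a chain edge of $C$ or of $D$) together give exactly the product construction by which \cite{GregorJMSW18} establishes this statement. Note that the present paper does not reprove Lemma~\ref{lem:prod-edge} but only cites \cite[Theorem~5]{GregorJMSW18}; your argument is essentially the same as the one given there, including the key point that a shared edge of $Q_{a+b}$ between two of the product SCDs would force a shared edge in $Q_a$ or in $Q_b$, contradicting edge-disjointness of the factors.
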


The base case for applying Lemma~\ref{lem:prod-edge} is the following result, which will be proved in Section~\ref{sec:small}.

\begin{lemma}
\label{lem:edge}
The cubes~$Q_{10}$ and~$Q_{11}$ each have five edge-disjoint SCDs.
\end{lemma}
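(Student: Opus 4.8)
The plan is not to search for the five SCDs directly in $Q_{10}$ and $Q_{11}$, whose cover graphs are far too large for an exhaustive search, but to find them in the much smaller necklace posets $N_{10}$ and $N_{11}$ and then unroll them, using the machinery developed in Sections~\ref{sec:unroll} and~\ref{sec:sat}. First I would set up the unrolling: an SCD of~$N_n$ equipped with suitable edge multiplicities lifts, by cyclically renaming the ground set $1\to 2\to \cdots \to n\to 1$, to an SCD of~$Q_n$, and a family of SCDs of~$N_n$ that is edge-disjoint in the multiplicity-weighted sense unrolls to an edge-disjoint family of SCDs of~$Q_n$. For the prime dimension $n=11$ this is essentially clean: apart from the single full chain, every symmetric saturated chain of~$N_{11}$ spawns exactly $11$ pairwise distinct copies, so it suffices to exhibit five SCDs of~$N_{11}$ whose chains pairwise share no edge. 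For the composite dimension $n=10$ one must be careful, since necklaces may have nontrivial cyclic stabilizers; here I would use the refined multiplicity bookkeeping of Section~\ref{sec:unroll} to track exactly which edges of~$Q_{10}$ get covered.

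Next I would encode the existence of five such SCDs of~$N_n$ as a propositional formula in CNF, following Section~\ref{sec:sat}. For each of the five decompositions I introduce Boolean variables recording, for every node and every edge of~$N_n$, whether it lies on a chain of that decomposition, together with the edge-multiplicity variables that make edge-disjointness an invariant of unrolling; the clauses then enforce that (i) each decomposition partitions the nodes of~$N_n$ into symmetric saturated chains (local flow/degree constraints at each node, symmetric endpoints, no skipped levels), (ii) every chain is unrollable, i.e.\ compatible with the cyclic action, and (iii) the five decompositions are pairwise edge-disjoint after unrolling. To shrink the formula I would prescribe one or two of the five SCDs to be particularly structured decompositions --- for instance the Griggs--Killian--Savage SCD~\cite{GriggsKillianSavage2004}, the Jordan SCD~\cite{Jordan2010}, the standard SCD restricted to~$N_n$, or the complement of one of these --- thereby fixing many variables in advance. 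Feeding this instance to an incremental SAT solver (Glucose, with MiniSat as a cross-check) should produce a satisfying assignment; I would unroll the resulting five SCDs of~$N_{10}$ and~$N_{11}$ explicitly, record them (they are provided electronically, cf.~\cite{www,preprint}), and verify independently that each is a genuine symmetric saturated chain decomposition of~$Q_{10}$, respectively~$Q_{11}$, and that the five are pairwise edge-disjoint.

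The hard part will be twofold. The conceptual difficulty is getting the multiplicity-weighted encoding right for the composite case $n=10$: edge-disjointness of the unrolled SCDs is not a purely local property of~$N_{10}$ unless the multiplicity variables and the clauses in~(ii) and~(iii) faithfully model the cyclic identifications, and a wrong convention would silently admit spurious solutions or none at all. The quantitative difficulty is that $5$ is essentially the best one can hope for via necklaces in this even dimension: by Lemma~\ref{lem:cyclic-bound} and Remark~\ref{rem:lbounds}, the necklace approach yields at most $b_n-1$ edge-disjoint SCDs of~$Q_n$ for even~$n$, and $b_{10}-1=5$, so the instance for~$N_{10}$ sits exactly at the feasibility boundary with no slack, and a slightly too restrictive encoding would turn it unsatisfiable. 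No unsatisfiability proof is needed for the lemma, but this tightness is what makes the search genuinely hard rather than routine.
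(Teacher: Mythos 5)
Your proposal follows essentially the same route as the paper: reduce to the necklace posets $N_{10}$ and $N_{11}$ with capacity-weighted edges, encode the search as a CNF with one decomposition prescribed to be a known construction (the paper fixes the Jordan SCD and its complement for $n=10$, and uses Lemma~\ref{lem:prime-unimodal} for the prime case $n=11$), resolve the unrollability issue for composite $n$ by incremental clause augmentation, and certify the explicitly exhibited SCDs by an independent computer check. The paper's actual proof then just consists of the concrete SCDs $X_{10},Y_{10},Z_{10}$ and $X_{11},Y_{11},Z_{11}$ (plus complements) found this way, so your plan matches it in all essentials.
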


\begin{proof}[Proof of Theorem~\ref{thm:edge}]
As every integer $n\geq (10-1)(11-1)=90$ is a non-negative integer combination of~10 and~11, we can apply Lemmas~\ref{lem:prod-edge} and~\ref{lem:edge} to obtain the desired SCDs.
\end{proof}

To complete the proofs of our main theorems, it remains to prove Lemma~\ref{lem:ortho} and Lemma~\ref{lem:edge}.
The corresponding SCDs are provided in Section~\ref{sec:small} below.

\section{Unrolling the necklace poset}
\label{sec:unroll}

Given a subset~$x\subseteq [n]$, we write~$\sigma(x)$ for the subset obtained from $x$ by cyclically renaming elements $1\rightarrow 2\rightarrow \cdots\rightarrow n\rightarrow 1$.
Moreover, we write~$\neck{x}$ for the family of all subsets obtained by repeatedly applying~$\sigma$ to~$x$, and we refer to $\neck{x}$ as a \emph{necklace}, and to any element of~$\neck{x}$ as a \emph{necklace representative}.
We say that the necklace $\neck{x}$ is \emph{full} if $|\neck{x}|=n$, and \emph{deficient} if $|\neck{x}|<n$.
For example, for $n=4$ the necklace $\neck{\{1,\!3,\!4\}}=\{\{1,\!3,\!4\},\{2,\!4,\!1\},\{3,\!1,\!2\},\{4,\!2,\!3\}\}$ is full, and the necklace $\neck{\{1,\!3\}}=\{\{1,\!3\},\{2,\!4\}\}$ is deficient.
Note that the cardinality of any necklace divides~$n$.
Consequently, if $n$ is a prime number, then~$\neck{\emptyset}$ and~$\neck{[n]}$ are the only deficient necklaces, and all other necklaces are full.
On the other hand, if $n$ is composite, then there are more than these two deficient necklaces.

The \emph{necklace poset~$N_n$} is the set of all necklaces~$\neck{x}$, $x\subseteq [n]$, and its cover relations are all pairs $(\neck{x},\neck{y})$ for which $x\subseteq y$ form a cover relation in the $n$-cube; see the left hand side of Figure~\ref{fig:NQ5}.
Similarly to the $n$-cube, we also refer to the cover relations in~$N_n$ as \emph{edges}.
As $\sigma$ preserves the set size, $N_n$ inherits the level structure from~$Q_n$, and notions such as symmetric chains and SCDs translate to~$N_n$ in the natural way.

As almost all necklaces of~$N_n$ are full, we have that $N_n$ is by a factor of~$n(1-o(1))$ smaller than~$Q_n$, which is vital for our computer searches for SCDs.
We now collect a few simple observations about transferring SCDs from~$N_n$ to~$Q_n$.
These observations are illustrated in Figure~\ref{fig:NQ5}.
Recall that all chains we consider are symmetric and saturated.

\begin{observation}
\label{obs:unroll-full}
Let $y=(y_1,\ldots,y_k)$ be a chain of full necklaces in~$N_n$.
Then there are necklace representatives $x=(x_1,\ldots,x_k)$ with $x_i\in y_i$ for $1\leq i\leq k$, such that $\sigma^i(x)=(\sigma^i(x_1),\ldots,\sigma^i(x_k))$ for $i=0,\ldots,n-1$ is a family of~$n$ disjoint chains in~$Q_n$ that visit exactly all elements from $y_1,\ldots,y_k$.
\end{observation}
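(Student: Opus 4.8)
\textbf{Proof plan for Observation~\ref{obs:unroll-full}.}
The plan is to build the representatives $x_1,\ldots,x_k$ one level at a time, following the chain $y$ from its bottom element $y_1$ upward, and to exploit the fact that all the necklaces involved are full so that there is no ``collision'' when we cyclically shift. First I would pick an arbitrary representative $x_1\in y_1$. Inductively, suppose $x_1\subseteq\cdots\subseteq x_i$ have been chosen with $x_j\in y_j$ for $j\le i$; since $(\neck{x_i},y_{i+1})$ is a cover relation in $N_n$, by definition there is a cover relation $x'\subseteq y'$ in $Q_n$ with $x'\in\neck{x_i}$ and $y'\in y_{i+1}$, say $x'=\sigma^t(x_i)$. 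Applying $\sigma^{-t}$ to the pair $x'\subseteq y'$ yields $x_i\subseteq\sigma^{-t}(y')$ with $\sigma^{-t}(y')\in y_{i+1}$; set $x_{i+1}:=\sigma^{-t}(y')$. This produces a saturated chain $x=(x_1,\ldots,x_k)$ in $Q_n$ with $x_i\in y_i$ for all $i$, and it is symmetric because the levels of $y_1,\ldots,y_k$ are symmetric around the middle (the level structure of $N_n$ being inherited from $Q_n$).

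Next I would apply $\sigma^i$ for $i=0,\ldots,n-1$ to this chain. Since $\sigma$ is an automorphism of $Q_n$ preserving set sizes, each $\sigma^i(x)=(\sigma^i(x_1),\ldots,\sigma^i(x_k))$ is again a symmetric saturated chain, and its elements of level $j$ lie in the necklace $\sigma^i(y_j)=y_j$ (a necklace is a $\sigma$-orbit). Hence all $n$ chains together visit only elements of $y_1\cup\cdots\cup y_k$. For the count: each full necklace $y_j$ has exactly $n$ representatives, so $y_1\cup\cdots\cup y_k$ has exactly $kn$ elements of $Q_n$; the $n$ chains $\sigma^0(x),\ldots,\sigma^{n-1}(x)$ together contain $kn$ elements counted with multiplicity, one from each $y_j$ in each chain. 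Thus to conclude that the chains are pairwise disjoint and cover $y_1\cup\cdots\cup y_k$ exactly, it suffices to show that the $n$ level-$j$ elements $\sigma^0(x_j),\ldots,\sigma^{n-1}(x_j)$ are pairwise distinct for each $j$ — but this is immediate from $|\neck{x_j}|=n$, i.e.\ fullness of $y_j$. Disjointness at a single level forces disjointness of the whole chains, and the counting identity upgrades ``pairwise disjoint'' to ``a partition of $\bigcup_j y_j$''.

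The only genuinely delicate point is the one just isolated: disjointness relies crucially on every $y_j$ being full, which is exactly the hypothesis of the observation (and is why deficient necklaces must be handled separately, presumably in a companion observation). Everything else is routine bookkeeping with the automorphism $\sigma$. I would therefore organize the write-up as: (i) construct $x$ by the level-by-level lifting argument; (ii) note $\sigma^i(x)$ is a symmetric saturated chain contained level-wise in the $y_j$'s; (iii) invoke fullness to get the $n$ shifts pairwise distinct at each level, hence the $n$ chains pairwise disjoint; (iv) a one-line cardinality check ($kn=kn$) to see they partition $y_1\cup\cdots\cup y_k$. The main obstacle is simply making sure the lifting in step (i) stays a \emph{saturated} chain — but since each step lifts a single cover relation of $N_n$ to a single cover relation of $Q_n$, consecutive sets differ by exactly one element, so saturation is automatic.
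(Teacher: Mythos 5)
Your proposal is correct and follows essentially the same route the paper indicates: the paper states this as an observation and only remarks that one picks $x_1\in y_1$ arbitrarily and then lifts each cover relation of~$N_n$ up the chain to a cover relation of~$Q_n$, which is exactly your step (i), and your use of fullness to get the $n$ cyclic shifts pairwise distinct at each level (hence disjoint chains covering $y_1\cup\cdots\cup y_k$) is the routine verification the paper leaves implicit.
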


The easiest way to pick necklace representatives satisfying those conditions is to move up the chain~$y$ from its minimal element~$y_1$ to its maximal element~$y_k$, starting with an arbitrary representative $x_1\in y_1$, and then arbitrarily picking $x_{j+1}\in y_{j+1}$ for $j=1,\ldots,k-1$ such that $(x_j,x_{j+1})$ is an edge in~$Q_n$.

We refer to the process of translating a chain from~$N_n$ to a family of $n$ chains in~$Q_n$ as described by Observation~\ref{obs:unroll-full} as \emph{unrolling}.
As an example, consider the chain $(y_1,\ldots,y_4)=\big(\neck{\{1\}},\neck{\{1,\!2\}},\neck{\{1,\!2,\!3\}},\neck{\{1,\!2,\!3,\!4\}}\big)$ in~$N_5$.
The necklace representatives $x=(x_1,\ldots,x_4)=(\{1\},\{1,\!2\},\{1,\!2,\!3\},\{1,\!2,\!3,\!4\})$ form a chain in~$Q_5$, and $\sigma^i(x)$, $i=0,\ldots,4$, is a family of five disjoint chains in~$Q_5$ that visit exactly all $5\cdot 4=20$ elements from $y_1,\ldots,y_4$.
It is crucial here to observe that the choice of necklace representatives in Observation~\ref{obs:unroll-full} is \emph{not unique}.
In the previous example, we could also choose $x=(x_1,\ldots,x_4)=(\{1\},\{1,\!5\},\{1,\!4,\!5\},\{1,\!2,\!4,\!5\})$ as necklace representatives, yielding a \emph{different} family of five disjoint chains in~$Q_5$.

The notion of unrolling can be extended straightforwardly from a chain of full necklaces to a chain that has one deficient necklace at each of its ends, as captured by the following observation.
The crucial insight here is that if a necklace~$\neck{x}$ is deficient and of size $d<n$, then $\neck{x}=\{\sigma^i(x)\mid i=0,\ldots,d-1\}$.

\begin{observation}
\label{obs:unroll-def}
Let $(y_0,\ldots,y_{k+1})$ be a chain of necklaces in~$N_n$ such that $y_1,\ldots,y_k$ are full and~$y_0$ and~$y_{k+1}$ are deficient and of the same size~$d<n$.
Then there are necklace representatives $(x_0,\ldots,x_{k+1})$ with $x_i\in y_i$ for $0\leq i\leq k+1$, such that $\sigma^i(x_0,\ldots,x_{k+1})$ for $i=0,\ldots,d-1$, and $\sigma^i(x_1,\ldots,x_k)$ for $i=d,\ldots,n-1$, is a family of $n$ disjoint chains in~$Q_n$ that visit exactly all elements from $y_0,\ldots,y_{k+1}$.
\end{observation}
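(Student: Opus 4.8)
The plan is to mimic the construction behind Observation~\ref{obs:unroll-full}, but to carefully track what happens at the two deficient ends of the chain. First I would pick necklace representatives by walking up the chain: choose an arbitrary $x_0 \in y_0$, and then for $j = 0, \dots, k$ pick $x_{j+1} \in y_{j+1}$ so that $(x_j, x_{j+1})$ is an edge of $Q_n$; such a choice always exists since $(\neck{x_j}, \neck{x_{j+1}})$ is an edge of $N_n$, meaning some representative of $\neck{x_{j+1}}$ covers some representative of $\neck{x_j}$, and applying a suitable power of $\sigma$ to that witnessing pair lets us assume the lower one is exactly $x_j$. This gives a saturated chain $(x_0, \dots, x_{k+1})$ in $Q_n$, and applying $\sigma^i$ to it for $i = 0, \dots, n-1$ yields $n$ saturated chains (each of length $k+2$); the same for the truncated chain $(x_1, \dots, x_k)$.

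The key structural fact to invoke is the one highlighted just before the statement: since $y_0 = \neck{x_0}$ is deficient of size $d < n$, we have $y_0 = \{\sigma^i(x_0) : i = 0, \dots, d-1\}$, and likewise $y_{k+1} = \{\sigma^i(x_{k+1}) : i = 0, \dots, d-1\}$ (note $|y_0| = |y_{k+1}| = d$ by hypothesis). In particular $\sigma^d(x_0) = x_0$ and $\sigma^d(x_{k+1}) = x_{k+1}$. Now I would argue that the stated family of $n$ chains --- namely $\sigma^i(x_0, \dots, x_{k+1})$ for $i = 0, \dots, d-1$ together with $\sigma^i(x_1, \dots, x_k)$ for $i = d, \dots, n-1$ --- partitions exactly the set $\bigcup_{j=0}^{k+1} y_j$. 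The natural way to see this is a counting argument combined with a disjointness check: the total number of elements covered, counted with multiplicity, is $d(k+2) + (n-d)k = nk + 2d = \sum_{j=0}^{k+1} |y_j|$, since the $k$ middle necklaces are full (contributing $nk$) and the two end necklaces contribute $d$ each. So it suffices to show the chains are pairwise disjoint; then they cover each target element exactly once.

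The main obstacle --- and the only place any real care is needed --- is verifying disjointness, and this is exactly where the truncation at level $i = d$ matters. Disjointness within the first block $\{\sigma^i(x_0,\dots,x_{k+1}) : 0 \le i < d\}$ at each fixed level $\ell$ reduces to the $\sigma^i(x_\ell)$ being distinct for $0 \le i < d$: for $1 \le \ell \le k$ this holds because $y_\ell$ is full (so its orbit has size $n \ge d$), and for $\ell \in \{0, k+1\}$ it holds because $y_\ell$ has size exactly $d$. Disjointness within the second block $\{\sigma^i(x_1,\dots,x_k) : d \le i < n\}$ is handled the same way using fullness of $y_1, \dots, y_k$. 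The cross-block disjointness is the delicate point: a first-block chain $\sigma^{i}(x_0,\dots,x_{k+1})$ and a second-block chain $\sigma^{i'}(x_1,\dots,x_k)$ with $0 \le i < d \le i' < n$ can only meet in the overlapping levels $1,\dots,k$, where we would need $\sigma^i(x_\ell) = \sigma^{i'}(x_\ell)$ for some such $\ell$; but then $\sigma^{i'-i}$ fixes a representative of the full necklace $y_\ell$, forcing $i' \equiv i \pmod n$, which is impossible since $0 \le i < d \le i' < n$. This is precisely why the first block is run only for $i < d$ and the second only for $i \ge d$: it is what prevents the full-necklace nodes $y_1,\dots,y_k$ from being visited twice while still covering the $d$-element necklaces $y_0$ and $y_{k+1}$ exactly once. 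Wrapping these observations together completes the proof.
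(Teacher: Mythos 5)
Your proposal is correct and follows exactly the route the paper intends (and leaves implicit, since the statement is only sketched as an observation): pick representatives greedily while moving up the chain, and use the crucial fact that a deficient necklace of size $d$ is precisely $\{\sigma^i(x)\mid i=0,\ldots,d-1\}$ together with the triviality of $\sigma$-stabilizers of full necklaces. Your counting-plus-disjointness bookkeeping is a clean way to make the "visit exactly all elements" claim precise, but it is not a different method.
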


As an example, consider the chain $y=(y_0,\ldots,y_4)=\big(\neck{\{1,\!5\}},\allowbreak \neck{\{1,\!2,\!5\}},\allowbreak \neck{\{1,\!2,\!3,\!5\}},\allowbreak \neck{\{1,\!2,\!3,\!5,\!6\}},\allowbreak \neck{\{1,\!2,\!3,\!5,\!6,\!7\}}\big)$ in~$N_8$.
It has one deficient necklace of size~$d=4$ at each of its ends, and all inner necklaces are full.
Taking $x=(x_0,\ldots,x_4)=(\{1,\!5\},\allowbreak \{1,\!2,\!5\},\allowbreak \{1,\!2,\!3,\!5\},\allowbreak \{1,\!2,\!3,\!5,\!6\},\allowbreak \{1,\!2,\!3,\!5,\!6,\!7\})$ as necklace representatives, unrolling yields four chains of size~5, namely $\sigma^i(x_0,\ldots,x_4)$ for $i=0,1,2,3$, and it yields four chains of size~3, namely $\sigma^i(x_1,\ldots,x_3)$ for $i=4,5,6,7$.

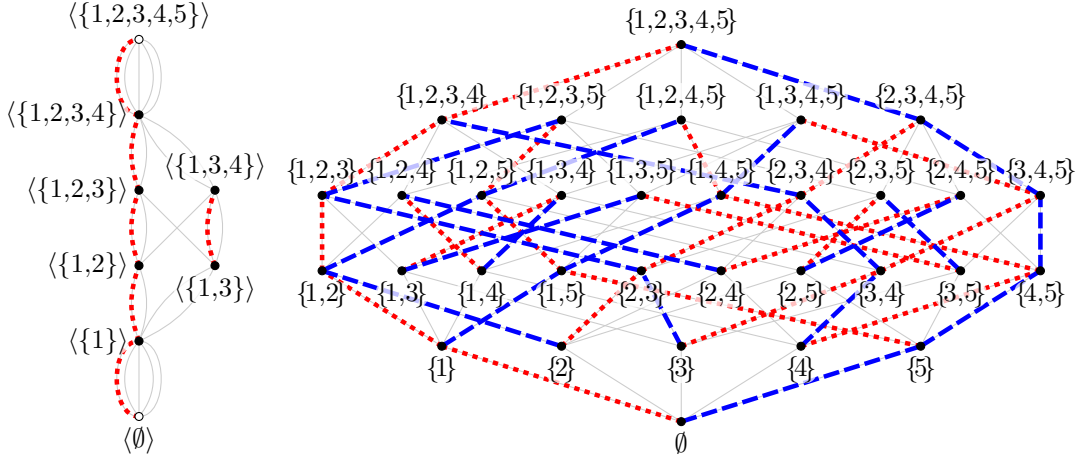
\begin{figure}
\centering
\begin{tikzpicture}
[xscale = 1, yscale = 1]

%Define the coordinates of the nodes
\coordinate (00000) at (0.000000,0);
\coordinate (00001) at (0.000000,1);
\coordinate (00011) at (0.000000,2);
\coordinate (00101) at (1.000000,2);
\coordinate (00111) at (0.000000,3);
\coordinate (01011) at (1.000000,3);
\coordinate (01111) at (0.000000,4);
\coordinate (11111) at (0.000000,5);

%Write the edges
\path[edge_cube_norm] (11111) to[out = -90.000000, in = 180 - -90.000000, relative] (01111);
\path[edge_cube_norm] (11111) to[out = -40.000000, in = 180 - -40.000000, relative] (01111);
\path[edge_cube_norm] (11111) to[out = 0.000000, in = 180 - 0.000000, relative] (01111);
\path[edge_cube_norm] (11111) to[out = 40.000000, in = 180 - 40.000000, relative] (01111);
\path[edge_cube_norm] (11111) to[out = 90.000000, in = 180 - 90.000000, relative] (01111);
\path[edge_cube_norm] (01111) to[out = -20.000000, in = 180 - -20.000000, relative] (01011);
\path[edge_cube_norm] (01111) to[out = 20.000000, in = 180 - 20.000000, relative] (01011);
\path[edge_cube_norm] (01111) to[out = -20.000000, in = 180 - -20.000000, relative] (00111);
\path[edge_cube_norm] (01111) to[out = 20.000000, in = 180 - 20.000000, relative] (00111);
\path[edge_cube_norm] (00001) to[out = -20.000000, in = 180 - -20.000000, relative] (00101);
\path[edge_cube_norm] (00001) to[out = 20.000000, in = 180 - 20.000000, relative] (00101);
\path[edge_cube_norm] (00001) to[out = -20.000000, in = 180 - -20.000000, relative] (00011);
\path[edge_cube_norm] (00001) to[out = 20.000000, in = 180 - 20.000000, relative] (00011);
\path[edge_cube_norm] (00000) to[out = -90.000000, in = 180 - -90.000000, relative] (00001);
\path[edge_cube_norm] (00000) to[out = -40.000000, in = 180 - -40.000000, relative] (00001);
\path[edge_cube_norm] (00000) to[out = 0.000000, in = 180 - 0.000000, relative] (00001);
\path[edge_cube_norm] (00000) to[out = 40.000000, in = 180 - 40.000000, relative] (00001);
\path[edge_cube_norm] (00000) to[out = 90.000000, in = 180 - 90.000000, relative] (00001);
\path[edge_cube_norm] (00111) to[out = 0.000000, in = 180 - 0.000000, relative] (00101);
\path[edge_cube_norm] (00111) to[out = -20.000000, in = 180 - -20.000000, relative] (00011);
\path[edge_cube_norm] (00111) to[out = 20.000000, in = 180 - 20.000000, relative] (00011);
\path[edge_cube_norm] (01011) to[out = -20.000000, in = 180 - -20.000000, relative] (00101);
\path[edge_cube_norm] (01011) to[out = 20.000000, in = 180 - 20.000000, relative] (00101);
\path[edge_cube_norm] (01011) to[out = 0.000000, in = 180 - 0.000000, relative] (00011);

\path[edge1] (00000) to[bend left=90] (00001);
\path[edge1] (00001) to[bend left=20] (00011);
\path[edge1] (00011) to[bend left=20] (00111);
\path[edge1] (00111) to[bend left=20] (01111);
\path[edge1] (01111) to[bend left=90] (11111);
\path[edge1] (00101) to[bend left=20] (01011);

%Write the nodes
\node[node_cube, deficient, label={[black, node_cube_label]above:\small$\neck{\{1,\!2,\!3,\!4,\!5\}}$}] at (11111) {};
\node[node_cube, full, label={[black, node_cube_label]left:\small$\neck{\{1,\!2,\!3,\!4\}}$}] at (01111) {};
\node[node_cube, full, label={[black, node_cube_label]left:\small$\neck{\{1\}}$}] at (00001) {};
\node[node_cube, deficient, label={[black, node_cube_label]below:\small$\neck{\emptyset}$}] at (00000) {};
\node[node_cube, full, label={[black, node_cube_label]left:\small$\neck{\{1,\!2,\!3\}}$}] at (00111) {};
\node[node_cube, full, label={[black, node_cube_label]left:\small$\neck{\{1,\!2\}}$}] at (00011) {};
\node[node_cube, full, label={[black, node_cube_label]below:\small$\neck{\{1,\!3\}}$}] at (00101) {};
\node[node_cube, full, label={[black, node_cube_label]above:\small$\neck{\{1,\!3,\!4\}}$}] at (01011) {};
\end{tikzpicture}
\begin{tikzpicture}[xscale = 1.05, yscale = 1]

\def\dist{.5ex}

%Define the coordinates of the nodes
\coordinate (10000) at (-3.00000,1);
\coordinate (01000) at (-1.50000,1);
\coordinate (00100) at (0.000000,1);
\coordinate (00010) at (1.500000,1);
\coordinate (00001) at (3.000000,1);

\coordinate (11110) at (-3.000000,4);
\coordinate (11101) at (-1.500000,4);
\coordinate (11011) at (0.000000,4);
\coordinate (10111) at (1.500000,4);
\coordinate (01111) at (3.000000,4);

\coordinate (00000) at (0.000000,0);

\coordinate (11000) at (-4.500000,2);
\coordinate (10100) at (-3.500000,2);
\coordinate (10010) at (-2.500000,2);
\coordinate (10001) at (-1.500000,2);
\coordinate (01100) at (-0.500000,2);
\coordinate (01010) at (0.500000,2);
\coordinate (01001) at (1.500000,2);
\coordinate (00110) at (2.500000,2);
\coordinate (00101) at (3.500000,2);
\coordinate (00011) at (4.500000,2);

\coordinate (11100) at (-4.500000,3);
\coordinate (11010) at (-3.500000,3);
\coordinate (11001) at (-2.500000,3);
\coordinate (10110) at (-1.500000,3);
\coordinate (10101) at (-0.500000,3);
\coordinate (10011) at (0.500000,3);
\coordinate (01110) at (1.500000,3);
\coordinate (01101) at (2.500000,3);
\coordinate (01011) at (3.500000,3);
\coordinate (00111) at (4.500000,3);

\coordinate (11111) at (0.000000,5);

%Write the edges
\path[edge_cube_norm] (00000) to[] (00001);
\path[edge_cube_norm] (00000) to[] (00010);
\path[edge_cube_norm] (00000) to[] (00100);
\path[edge_cube_norm] (00000) to[] (01000);
\path[edge_cube_norm] (00000) to[] (10000);
\path[edge_cube_norm] (00001) to[] (00011);
\path[edge_cube_norm] (00001) to[] (00101);
\path[edge_cube_norm] (00001) to[] (01001);
\path[edge_cube_norm] (00001) to[] (10001);
\path[edge_cube_norm] (00010) to[] (00011);
\path[edge_cube_norm] (00010) to[] (00110);
\path[edge_cube_norm] (00010) to[] (01010);
\path[edge_cube_norm] (00010) to[] (10010);
\path[edge_cube_norm] (00011) to[] (00111);
\path[edge_cube_norm] (00011) to[] (01011);
\path[edge_cube_norm] (00011) to[] (10011);
\path[edge_cube_norm] (00100) to[] (00101);
\path[edge_cube_norm] (00100) to[] (00110);
\path[edge_cube_norm] (00100) to[] (01100);
\path[edge_cube_norm] (00100) to[] (10100);
\path[edge_cube_norm] (00101) to[] (00111);
\path[edge_cube_norm] (00101) to[] (01101);
\path[edge_cube_norm] (00101) to[] (10101);
\path[edge_cube_norm] (00110) to[] (00111);
\path[edge_cube_norm] (00110) to[] (01110);
\path[edge_cube_norm] (00110) to[] (10110);
\path[edge_cube_norm] (00111) to[] (01111);
\path[edge_cube_norm] (00111) to[] (10111);
\path[edge_cube_norm] (01000) to[] (01001);
\path[edge_cube_norm] (01000) to[] (01010);
\path[edge_cube_norm] (01000) to[] (01100);
\path[edge_cube_norm] (01000) to[] (11000);
\path[edge_cube_norm] (01001) to[] (01011);
\path[edge_cube_norm] (01001) to[] (01101);
\path[edge_cube_norm] (01001) to[] (11001);
\path[edge_cube_norm] (01010) to[] (01011);
\path[edge_cube_norm] (01010) to[] (01110);
\path[edge_cube_norm] (01010) to[] (11010);
\path[edge_cube_norm] (01011) to[] (01111);
\path[edge_cube_norm] (01011) to[] (11011);
\path[edge_cube_norm] (01100) to[] (01101);
\path[edge_cube_norm] (01100) to[] (01110);
\path[edge_cube_norm] (01100) to[] (11100);
\path[edge_cube_norm] (01101) to[] (01111);
\path[edge_cube_norm] (01101) to[] (11101);
\path[edge_cube_norm] (01110) to[] (01111);
\path[edge_cube_norm] (01110) to[] (11110);
\path[edge_cube_norm] (01111) to[] (11111);
\path[edge_cube_norm] (10000) to[] (10001);
\path[edge_cube_norm] (10000) to[] (10010);
\path[edge_cube_norm] (10000) to[] (10100);
\path[edge_cube_norm] (10000) to[] (11000);
\path[edge_cube_norm] (10001) to[] (10011);
\path[edge_cube_norm] (10001) to[] (10101);
\path[edge_cube_norm] (10001) to[] (11001);
\path[edge_cube_norm] (10010) to[] (10011);
\path[edge_cube_norm] (10010) to[] (10110);
\path[edge_cube_norm] (10010) to[] (11010);
\path[edge_cube_norm] (10011) to[] (10111);
\path[edge_cube_norm] (10011) to[] (11011);
\path[edge_cube_norm] (10100) to[] (10101);
\path[edge_cube_norm] (10100) to[] (10110);
\path[edge_cube_norm] (10100) to[] (11100);
\path[edge_cube_norm] (10101) to[] (10111);
\path[edge_cube_norm] (10101) to[] (11101);
\path[edge_cube_norm] (10110) to[] (10111);
\path[edge_cube_norm] (10110) to[] (11110);
\path[edge_cube_norm] (10111) to[] (11111);
\path[edge_cube_norm] (11000) to[] (11001);
\path[edge_cube_norm] (11000) to[] (11010);
\path[edge_cube_norm] (11000) to[] (11100);
\path[edge_cube_norm] (11001) to[] (11011);
\path[edge_cube_norm] (11001) to[] (11101);
\path[edge_cube_norm] (11010) to[] (11011);
\path[edge_cube_norm] (11010) to[] (11110);
\path[edge_cube_norm] (11011) to[] (11111);
\path[edge_cube_norm] (11100) to[] (11101);
\path[edge_cube_norm] (11100) to[] (11110);
\path[edge_cube_norm] (11101) to[] (11111);
\path[edge_cube_norm] (11110) to[] (11111);

\path[edge1] (00000) to[] (10000);
\path[edge1] (10000) to[] (11000);
\path[edge1] (11000) to[] (11100);
\path[edge1] (11100) to[] (11110);
\path[edge1] (11110) to[] (11111);
\path[edge1] (01000) to[] (01100);
\path[edge1] (01100) to[] (01110);
\path[edge1] (01110) to[] (01111);
\path[edge1] (00100) to[] (00110);
\path[edge1] (00110) to[] (00111);
\path[edge1] (00111) to[] (10111);
\path[edge1] (00010) to[] (00011);
\path[edge1] (00011) to[] (10011);
\path[edge1] (10011) to[] (11011);
\path[edge1] (00001) to[] (10001);
\path[edge1] (10001) to[] (11001);
\path[edge1] (11001) to[] (11101);
\path[edge1] (00101) to[] (10101);
\path[edge1] (10010) to[] (11010);
\path[edge1] (01001) to[] (01101);
\path[edge1] (10100) to[] (10110);
\path[edge1] (01010) to[] (01011);
\path[edge2] (00000) to[] (00001);
\path[edge2] (00001) to[] (00011);
\path[edge2] (00011) to[] (00111);
\path[edge2] (00111) to[] (01111);
\path[edge2] (01111) to[] (11111);
\path[edge2] (10000) to[] (10001);
\path[edge2] (10001) to[] (10011);
\path[edge2] (10011) to[] (10111);
\path[edge2] (01000) to[] (11000);
\path[edge2] (11000) to[] (11001);
\path[edge2] (11001) to[] (11011);
\path[edge2] (00100) to[] (01100);
\path[edge2] (01100) to[] (11100);
\path[edge2] (11100) to[] (11101);
\path[edge2] (00010) to[] (00110);
\path[edge2] (00110) to[] (01110);
\path[edge2] (01110) to[] (11110);
\path[edge2] (01010) to[] (11010);
\path[edge2] (00101) to[] (01101);
\path[edge2] (10010) to[] (10110);
\path[edge2] (01001) to[] (01011);
\path[edge2] (10100) to[] (10101);

%Write the nodes
\node[node_cube, label={[black, node_cube_label]below:\small$\emptyset$}] at (00000) {};
\node[node_cube, label={[black, node_cube_label]below:\small$\{\!5\!\}$}] at (00001) {};
\node[node_cube, label={[black, node_cube_label]below:\small$\{\!4\!\}$}] at (00010) {};
\node[node_cube, label={[black, node_cube_label]below:\small$\{\!4,\!5\!\}$}] at (00011) {};
\node[node_cube, label={[black, node_cube_label]below:\small$\{\!3\!\}$}] at (00100) {};
\node[node_cube, label={[black, node_cube_label]below:\small$\{\!3,\!5\!\}$}] at (00101) {};
\node[node_cube, label={[black, node_cube_label]below:\small$\{\!3,\!4\!\}$}] at (00110) {};
\node[node_cube, label={[black, node_cube_label]above:\small$\{\!3,\!4,\!5\!\}$}] at (00111) {};
\node[node_cube, label={[black, node_cube_label]below:\small$\{\!2\!\}$}] at (01000) {};
\node[node_cube, label={[black, node_cube_label]below:\small$\{\!2,\!5\!\}$}] at (01001) {};
\node[node_cube, label={[black, node_cube_label]below:\small$\{\!2,\!4\!\}$}] at (01010) {};
\node[node_cube, label={[black, node_cube_label]above:\small$\{\!2,\!4,\!5\!\}$}] at (01011) {};
\node[node_cube, label={[black, node_cube_label]below:\small$\{\!2,\!3\!\}$}] at (01100) {};
\node[node_cube, label={[black, node_cube_label]above:\small$\{\!2,\!3,\!5\!\}$}] at (01101) {};
\node[node_cube, label={[black, node_cube_label]above:\small$\{\!2,\!3,\!4\!\}$}] at (01110) {};
\node[node_cube, label={[black, node_cube_label]above:\small$\{\!2,\!3,\!4,\!5\!\}$}] at (01111) {};
\node[node_cube, label={[black, node_cube_label]below:\small$\{\!1\!\}$}] at (10000) {};
\node[node_cube, label={[black, node_cube_label]below:\small$\{\!1,\!5\!\}$}] at (10001) {};
\node[node_cube, label={[black, node_cube_label]below:\small$\{\!1,\!4\!\}$}] at (10010) {};
\node[node_cube, label={[black, node_cube_label]above:\small$\{\!1,\!4,\!5\!\}$}] at (10011) {};
\node[node_cube, label={[black, node_cube_label]below:\small$\{\!1,\!3\!\}$}] at (10100) {};
\node[node_cube, label={[black, node_cube_label]above:\small$\{\!1,\!3,\!5\!\}$}] at (10101) {};
\node[node_cube, label={[black, node_cube_label]above:\small$\{\!1,\!3,\!4\!\}$}] at (10110) {};
\node[node_cube, label={[black, node_cube_label]above:\small$\{\!1,\!3,\!4,\!5\!\}$}] at (10111) {};
\node[node_cube, label={[black, node_cube_label]below:\small$\{\!1,\!2\!\}$}] at (11000) {};
\node[node_cube, label={[black, node_cube_label]above:\small$\{\!1,\!2,\!5\!\}$}] at (11001) {};
\node[node_cube, label={[black, node_cube_label]above:\small$\{\!1,\!2,\!4\!\}$}] at (11010) {};
\node[node_cube, label={[black, node_cube_label]above:\small$\{\!1,\!2,\!4,\!5\!\}$}] at (11011) {};
\node[node_cube, label={[black, node_cube_label]above:\small$\{\!1,\!2,\!3\!\}$}] at (11100) {};
\node[node_cube, label={[black, node_cube_label]above:\small$\{\!1,\!2,\!3,\!5\!\}$}] at (11101) {};
\node[node_cube, label={[black, node_cube_label]above:\small$\{\!1,\!2,\!3,\!4\!\}$}] at (11110) {};
\node[node_cube, label={[black, node_cube_label]above:\small$\{\!1,\!2,\!3,\!4,\!5\!\}$}] at (11111) {};
\end{tikzpicture}
\caption{Unrolling an SCD of the necklace poset~$N_5$ (left) to an SCD of the 5-cube (right).
The SCD is highlighted by dotted lines, full necklaces are indicated by filled bullets, and deficient necklaces are indicated by empty bullets.
Complementing the resulting SCD of~$Q_5$ yields another SCD (dashed lines), which is edge-disjoint from the first one.
The capacities of the edges of~$N_n$ are visualized by multiple parallel edges.}
\label{fig:NQ5}
\end{figure}

We say that a chain in~$N_n$ is \emph{unimodal} if its minimal and maximal element are necklaces of the same size (possibly deficient), and all other elements are full necklaces.
Moreover, we say that an SCD of~$N_n$ is \emph{unimodal} if all of its chains are unimodal.
Combining Observations~\ref{obs:unroll-full} and~\ref{obs:unroll-def} yields the following fact, which allows us to translate an entire SCD from~$N_n$ to~$Q_n$.

\begin{observation}
\label{obs:unroll-unimodal}
Given a unimodal SCD of~$N_n$, $n\geq 1$, unrolling each of its chains yields an SCD of~$Q_n$.
\end{observation}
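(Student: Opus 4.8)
The plan is to verify the statement chain by chain. Let $\mathcal{C}$ be a unimodal SCD of~$N_n$, and fix any chain $C=(y_0,\dots,y_m)$ of~$\mathcal{C}$; since all chains are symmetric and saturated, $y_0$ and~$y_m$ sit at symmetric levels~$\ell$ and~$n-\ell$, and by unimodality $y_0$ and~$y_m$ have a common cardinality~$d$ while every $y_i$ with $0<i<m$ is full. First I would dispatch $C$ to one of the two preceding observations: if $d=n$, then $C$ is a chain of full necklaces and Observation~\ref{obs:unroll-full} unrolls it into $n$ pairwise disjoint chains of~$Q_n$ that visit exactly the elements of $y_0,\dots,y_m$; if $d<n$, then Observation~\ref{obs:unroll-def}, applied with $y_1,\dots,y_{m-1}$ as the full interior necklaces, does the same, producing $d$ chains of the full length together with $n-d$ shorter chains. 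The degenerate case $m=0$, which can occur only when $n$ is even and $y_0$ lies at the middle level, is handled directly: the single necklace $\neck{x}$ unrolls to the $|\neck{x}|$ singleton chains $\{\sigma^i(x)\}$, $0\le i<|\neck{x}|$, consistent with the same count. In every case the resulting chains of~$Q_n$ are symmetric and saturated: symmetric because $C$ runs between the symmetric levels~$\ell$ and~$n-\ell$ (respectively between~$\ell+1$ and~$n-\ell-1$ for the shorter chains in the deficient case) and $\sigma$ preserves levels, and saturated because consecutive chosen representatives form a cover relation of~$Q_n$ (by the recipe following Observation~\ref{obs:unroll-full}) and $\sigma$ maps cover relations to cover relations.

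It then remains to check that the union, taken over all chains of~$\mathcal{C}$, of these unrolled families is a partition of the elements of~$Q_n$. The key point is that the necklaces $\{\neck{x}:x\subseteq[n]\}$ themselves partition~$Q_n$, since every subset belongs to exactly one necklace. As $\mathcal{C}$ partitions the set of all necklaces into chains, every $x\in Q_n$ lies in a necklace $\neck{x}$ belonging to exactly one chain $C$ of~$\mathcal{C}$, and unrolling $C$ visits exactly the elements of the necklaces on~$C$; in particular $x$ is visited, which gives coverage. For disjointness, two chains obtained by unrolling the same $C$ are disjoint by Observation~\ref{obs:unroll-full} or~\ref{obs:unroll-def}, whereas two chains obtained by unrolling distinct $C\neq C'$ live on disjoint sets of necklaces and hence on disjoint sets of elements of~$Q_n$. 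Thus the unrolled families together form an SCD of~$Q_n$, as claimed.

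There is essentially no obstacle beyond this bookkeeping: the observation is a direct consequence of Observations~\ref{obs:unroll-full} and~\ref{obs:unroll-def} together with the fact that necklaces partition~$Q_n$. The one point that deserves a moment's care is why unimodality is exactly the right hypothesis. A general symmetric saturated chain of~$N_n$ has its endpoints at symmetric levels, but those endpoints could be deficient necklaces of different orbit sizes, or a deficient necklace could appear in the interior of the chain; in either situation neither Observation~\ref{obs:unroll-full} nor Observation~\ref{obs:unroll-def} applies directly. Unimodality excludes precisely these configurations, so that every chain of~$\mathcal{C}$ falls into the scope of one of the two observations and the argument above goes through.
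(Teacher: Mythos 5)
Your proposal is correct and follows exactly the route the paper intends: the paper treats this observation as an immediate consequence of combining Observations~\ref{obs:unroll-full} and~\ref{obs:unroll-def} (applied chain by chain, with coverage and disjointness across chains coming from the fact that necklaces partition~$Q_n$), which is precisely the bookkeeping you carry out. Your explicit handling of the size-one chains and your remark on why unimodality is the right hypothesis are consistent with the paper's discussion and add nothing that conflicts with it.
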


This observation is illustrated in Figure~\ref{fig:NQ5}.
We refer to the process of unrolling all chains of an SCD of~$N_n$ to an SCD of~$Q_n$ as \emph{unrolling the SCD}.
Recall that in this unrolling process there may be several choices for picking necklace representatives for each chain.

We now want to simultaneously unroll multiple SCDs from~$N_n$ to edge-disjoint SCDs of~$Q_n$.
This motivates the following definitions:
For any edge $e=(\neck{x},\neck{y})$ of~$N_n$ where~$\neck{x}$ is on level $k\leq (n-1)/2$, we define the \emph{capacity}~$c(e)$ as the number of distinct elements from~$[n]$ that can be added to~$x$ to reach an element in~$\neck{y}$.
For any edge $e=(\neck{y},\neck{x})$ of~$N_n$ where~$\neck{x}$ is on level $k\geq (n+1)/2$, we define the capacity~$c(e)$ symmetrically as the number of distinct elements from~$[n]$ that can be removed from~$x$ to reach an element in~$\neck{y}$.
We can think of the cover graph of~$N_n$ with those capacities on its edges~$e$ as a multigraph with edge multiplicities~$c(e)$; see the left hand side of Figure~\ref{fig:NQ5}.
It is easy to see that the sum of capacities of all edges $e=(\neck{x},\neck{y})$ for fixed $\neck{x}$ on level~$k\leq (n-1)/2$ is~$n-k$, which is equal to the number of neighbors of~$x$ in level~$k+1$ of the cover graph of~$Q_n$.
We say that a family of unimodal SCDs of~$N_n$ is \emph{edge-disjoint} if for every edge~$e$ in~$N_n$, there are at most $c(e)$ chains in those SCDs containing this edge.

For even $n\geq 4$, the middle level of~$N_n$ contains the deficient necklace $\neck{\{1,3,5,\ldots,\allowbreak n-1\}}$.
Consequently, any unimodal chain containing this necklace has size~1.
It follows that the edges incident to this necklace cannot be used by any chain, so that the upper bound~$b_n$ for the maximum number of edge-disjoint SCDs given in the introduction (for $Q_n$) can be improved by~1, yielding the following lemma (see~\cite{Wille2018} for a formal proof).

\begin{lemma}
\label{lem:cyclic-bound}
For even $n\geq 4$, there are at most $b_n-1=n/2$ unimodal SCDs of~$N_n$ that are edge-disjoint.
\end{lemma}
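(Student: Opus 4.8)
The plan is to locate a single necklace near the middle of $N_n$ whose local edge capacities already force the bound, avoiding any global argument. Fix an even $n\geq 4$, let $m:=\{1,3,5,\dots,n-1\}$ be the set of odd elements of $[n]$, and put $M:=\neck{m}$. Since $\sigma^2$ fixes $m$ we have $M=\{m,\sigma(m)\}$, which has size $2<n$; thus $M$ is deficient, and as $|m|=n/2$ it lies on the middle level of $N_n$. The first step is to show that in \emph{every} unimodal SCD of $N_n$ the node $M$ forms a chain by itself. Indeed, let $C$ be a unimodal chain containing $M$. By definition of unimodal, every element of $C$ other than its two endpoints is a full necklace, so the deficient necklace $M$ must be the minimal or the maximal element of $C$. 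But $C$ is symmetric and saturated: if it spans the levels from $\ell$ to $n-\ell$, then it meets each of these levels exactly once. Since $M$ is an endpoint and lies on level $n/2$, this forces $\ell=n-\ell=n/2$ and hence $C=\{M\}$. Consequently no edge of $N_n$ incident to $M$ is used by any chain of any unimodal SCD.

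The second step is a counting argument at a neighbour of $M$ one level below. Put $w:=\{3,5,\dots,n-1\}=m\setminus\{1\}$ and $P:=\neck{w}$; then $P$ lies on level $n/2-1$, and $(P,M)$ is an edge of $N_n$ since $w\subseteq m$ with $|m|=|w|+1$. Let $D_1,\dots,D_t$ be an edge-disjoint family of unimodal SCDs of $N_n$. In each $D_i$ the node $P$ lies on a unique chain $C_i$; since $C_i$ is symmetric and contains a node on level $n/2-1$, which is strictly below the middle, $C_i$ also contains a node on level $n/2$, and being saturated it uses \emph{exactly one} edge $e_i$ going up from $P$. By the first step $e_i\neq(P,M)$, since otherwise $M\in C_i$, whereas $M$ lies on the singleton chain $\{M\}\neq C_i$. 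Counting the pairs $(i,e_i)$ in two ways and using that at most $c(e)$ of the SCDs contain any given edge $e$, we obtain
\[
t \;=\; \sum_{\substack{e=(P,\neck{y})\\ e\neq(P,M)}} \bigl|\{\,i:e_i=e\,\}\bigr| \;\le\; \sum_{\substack{e=(P,\neck{y})\\ e\neq(P,M)}} c(e).
\]
Recalling the identity that the capacities of all edges leaving a level-$k$ necklace upward sum to $n-k$ (valid for $k\le(n-1)/2$), the right-hand side equals $\bigl(n-(n/2-1)\bigr)-c(P,M)=(n/2+1)-c(P,M)$. Finally $c(P,M)\geq1$, because adding the element $1$ to $w$ yields $m\in M$. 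Hence $t\leq n/2=b_n-1$, as claimed.

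\textbf{The main obstacle.} The structural first step is routine once one spots the necklace $M$ of the odd-element set. The delicate point is the second step: one must verify that the chain $C_i$ through $P$ is not a singleton and really uses precisely one up-edge at $P$ --- this is where symmetry together with $n/2-1$ being \emph{strictly} below the middle level is used --- and one must run the capacity bookkeeping correctly, checking that each $D_i$ contributes exactly one up-edge at $P$, always distinct from $(P,M)$. A conceptually cleaner, technically equivalent alternative, which I would keep in reserve, is to unroll $D_1,\dots,D_t$ into edge-disjoint SCDs of $Q_n$; by the first step both $m$ and $\sigma(m)$ are then singleton chains of each unrolled SCD, so the level-$(n/2-1)$ vertex $w$ of $Q_n$ has only $n/2$ usable up-neighbours rather than $n/2+1$, and the standard degree count at $w$ gives $t\leq n/2$.
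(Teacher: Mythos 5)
Your proof is correct and follows essentially the same route as the paper: the deficient necklace $\neck{\{1,3,5,\dots,n-1\}}$ on the middle level can only lie on a size-1 unimodal chain, so its incident edges are never used, and the capacity count at a necklace one level below then drops from $n/2+1$ to $n/2$ --- exactly the sketch the paper gives (deferring the formal bookkeeping to Wille's thesis), which you carry out correctly with the capacity identity and the bound $c(P,M)\geq 1$. One caution about your ``reserve'' alternative: unrolling edge-disjoint unimodal SCDs of $N_n$ to edge-disjoint SCDs of $Q_n$ is not guaranteed for composite $n$ (which includes every even $n\geq 4$; see Figure~\ref{fig:not-unrollable}), so that variant should not be relied on, but your main argument inside $N_n$ does not need it.
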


Lemma~\ref{lem:cyclic-bound} shows that our approach via the necklace poset~$N_n$ yields at most four edge-disjoint SCDs of~$N_8$ and at most five edge-disjoint SCDs of~$N_{10}$; recall Remark~\ref{rem:lbounds}.
By considering the deficient necklace~$\neck{\{1,4,7\}}$ and its complement in~$N_9$, one can similarly show that $N_9$ has at most $b_9-1=4$ edge-disjoint SCDs (see~\cite{Wille2018}).

The following lemma was stated and proved in~\cite{GregorJMSW18} in slightly different form.

\begin{lemma}[{\cite[Lemma~7]{GregorJMSW18}}]
\label{lem:prime-unimodal}
Let $n\geq 2$ be a prime number.
Every family of $s\leq b_n$ unimodal SCDs of~$N_n$ that are edge-disjoint can be unrolled to $s$ edge-disjoint SCDs of~$Q_n$.
\end{lemma}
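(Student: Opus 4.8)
The plan is to unroll the $s$ given unimodal SCDs of $N_n$ one at a time, using Observation~\ref{obs:unroll-unimodal}, but being careful to choose the necklace representatives so that the resulting SCDs of $Q_n$ are pairwise edge-disjoint. The key structural fact to exploit is that $n$ is prime: the only deficient necklaces are $\neck{\emptyset}$ and $\neck{[n]}$, every other necklace has size exactly $n$, and every edge $e=(\neck{x},\neck{y})$ of $N_n$ with $\neck{x}$ on level $k\le (n-1)/2$ that joins two full necklaces has capacity $c(e)=n$ except that the edge leaving $\neck{\emptyset}$ has capacity $n$ as well (there are $n$ singletons), and similarly at the top. In other words, apart from the cover relations incident to $\neck{\emptyset}$ and $\neck{[n]}$, every edge of $N_n$ has the full capacity $n$.

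First I would set up the counting. When we unroll a single chain $y=(y_1,\dots,y_k)$ of full necklaces via Observation~\ref{obs:unroll-full}, the $n$ cyclic shifts $\sigma^i(x_1,\dots,x_k)$, $i=0,\dots,n-1$, use, for each edge $e=(y_j,y_{j+1})$ of the chain, exactly $n$ distinct edges of $Q_n$ lying above $e$ — namely all $n$ of them, since $c(e)=n$. Hence an edge $e$ of $N_n$ of capacity $n$ that is used by $t$ chains of the combined SCDs of $N_n$ forces all $n\cdot t$ of the $Q_n$-edges above it to be used, but there are only $n$ such edges, so we must have $t\le 1$; edge-disjointness of the family in $N_n$ gives exactly this, and in that case the $n$ edges above $e$ are covered once each, automatically with no conflict between the (at most one) chain using $e$ in decomposition $\ell$ and any chain in decomposition $\ell'$, because there simply are no other chains using $e$. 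The only place where several chains of the $N_n$-family may share an edge is at an edge of capacity $< n$, i.e.\ an edge incident to $\neck{\emptyset}$ or $\neck{[n]}$ — but for $n$ prime these edges have capacity $n$ too (there are $n$ singletons to add to $\emptyset$, and $n$ elements to remove from $[n]$), so in fact \emph{every} edge of $N_n$ has capacity $n$, and edge-disjointness in $N_n$ already means each edge is used at most once across the whole family.

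Consequently the unrolling is essentially forced to be edge-disjoint: I would argue that for each edge $e$ of $N_n$, the set of $n$ edges of $Q_n$ lying above (or below) $e$ is used, in total over all $s$ unrolled SCDs, at most once for each shift class, and since at most one $N_n$-chain across the family uses $e$, the $n$ $Q_n$-edges over $e$ are each used at most once. Two $Q_n$-edges coming from \emph{different} $N_n$-edges $e\neq e'$ are themselves distinct, because the projection $Q_n\to N_n$ sending an edge to the necklace-edge it covers is well defined and maps them to $e$ and $e'$ respectively. Therefore distinct unrolled chains — whether from the same or different SCDs of $N_n$ — never share a $Q_n$-edge. Combining this with Observation~\ref{obs:unroll-unimodal}, which guarantees each unrolled family is itself an SCD of $Q_n$, yields the claim.

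The main obstacle, and the only genuinely delicate point, is making precise the claim that when a single $N_n$-chain uses an edge $e$ of capacity $n$, its $n$ cyclic shifts cover \emph{all $n$} of the edges of $Q_n$ above $e$, each exactly once — this uses that $n$ is prime to rule out a shift $\sigma^i$ ($0<i<n$) fixing one of the representatives, which would collapse the orbit and reuse edges. I would handle this by noting that a full necklace $\neck{x}$ with $|x|\notin\{0,n\}$ has trivial stabilizer under $\sigma$ precisely when $|\neck{x}|=n$, which holds for all non-extreme necklaces when $n$ is prime, so each orbit of chains (and of their edges) has full size $n$ and the $n$ edge-slots above $e$ are filled bijectively. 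Once this is in hand, the edge-disjointness bookkeeping above is routine, so I would keep that part brief.
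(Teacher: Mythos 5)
Your argument rests on a false premise about capacities, and this premise is doing all the work. The capacity $c(e)$ of an edge $e=(\neck{x},\neck{y})$ is the number of elements that can be added to a fixed representative $x$ to land in $\neck{y}$; as noted in the paper, the capacities of all up-edges at a level-$k$ necklace sum to $n-k$, so a typical edge has small capacity even when $n$ is prime. Concretely, in $N_5$ the edge from $\neck{\{1\}}$ to $\neck{\{1,\!2\}}$ has capacity $2$ (add $2$ or $5$), not $5$; see Figure~\ref{fig:NQ5}. Likewise, the number of $Q_n$-edges projecting onto an edge $e$ between two full necklaces is $n\cdot c(e)$, not $n$, and an unrolled chain does not use all of them: it uses exactly one $\sigma$-orbit of size $n$ (determined by which element is flipped), out of the $c(e)$ such orbits. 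So your counting step ``$t$ chains on $e$ force $n\cdot t$ edges above $e$, but there are only $n$, hence $t\le 1$'' is incorrect on both counts, and its conclusion contradicts the definition in the paper: a family of SCDs of $N_n$ is called edge-disjoint precisely when up to $c(e)$ chains may share the edge $e$.

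Because of this, your proposal never engages with the actual content of the lemma. The real task is to show that when several chains (possibly from different SCDs, possibly from the same one at a deficient endpoint) legitimately share an $N_n$-edge $e$, the necklace representatives can be chosen so that these chains occupy pairwise distinct orbits above $e$, simultaneously for all edges of all chains; the freedom in Observation~\ref{obs:unroll-full} is exactly what must be exploited, not dismissed as ``essentially forced.'' This coordination is where primality enters in an essential way (only $\neck{\emptyset}$ and $\neck{[n]}$ are deficient, all orbits have full size $n$, and the orbit choices along a chain of full necklaces are independent), and it is exactly what can fail for composite $n$, as Figure~\ref{fig:not-unrollable} shows: there, two edge-disjoint chains in $N_8$ admit no compatible choice of representatives. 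Your only use of primality (trivial stabilizers of full necklaces) is correct but nowhere near sufficient; a valid proof must include the assignment argument sketched above, which your write-up omits entirely.
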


In Section~\ref{sec:small} we will apply Lemma~\ref{lem:prime-unimodal} to prove the case~$n=11$ of Lemma~\ref{lem:edge}.

\begin{figure}
\begin{tikzpicture}[xscale = .75, yscale = 1]

\def\xa{2}
\def\xb{3}
\def\xc{4}
\def\xd{5}
\def\xe{6}

%Define the coordinates of the nodes
\coordinate (00010001) at (0,\xa);
\coordinate (00010011) at (0,\xb);
\coordinate (00010111) at (-.5,\xc);
\coordinate (01010011) at (.5,\xc);
\coordinate (00110111) at (0,\xd);
\coordinate (01110111) at (0,\xe);

%Write the edges
\path[edge_cube_norm] (00010001) to[bend left = 25] (00010011);
\path[edge_cube_norm] (00010001) to[bend right = 25] (00010011);
\path[edge_cube_norm] (00010011) to[] (00010111);
\path[edge_cube_norm] (00010011) to[] (01010011);
\path[edge_cube_norm] (00110111) to[] (00010111);
\path[edge_cube_norm] (01110111) to[bend left = 25] (00110111);
\path[edge_cube_norm] (01110111) to[bend right = 25] (00110111);

\path[edge1] (00010001) to[bend left = 25] (00010011);
\path[edge1] (00010011) to[] (00010111);
\path[edge1] (00010111) to[] (00110111);
\path[edge1] (00110111) to[bend left = 25] (01110111);
\path[edge2] (00010001) to[bend right = 25] (00010011);
\path[edge2] (00010011) to[] (01010011);
\path[edge2] (01010011) to[] (00110111);
\path[edge2] (00110111) to[bend right = 25] (01110111);

%Write the nodes
\node[node_cube, deficient, label={[black, node_cube_label]below:\tiny$\neck{\{1,\!5\}}$}] at (00010001) {};
\node[node_cube, full, label={[black, node_cube_label]below:\tiny$\neck{\{1,\!2,\!5\}}$}] at (00010011) {};
\node[node_cube, full, label={[black, node_cube_label]below:\tiny$\neck{\{1,\!2,\!3,\!5\}}$}] at (00010111) {};
\node[node_cube, full, label={[black, node_cube_label]above:\tiny$\neck{\{1,\!2,\!5,\!7\}}$}] at (01010011) {};
\node[node_cube, full, label={[black, node_cube_label]above:\tiny$\neck{\{1,\!2,\!3,\!5,\!6\}}$}] at (00110111) {};
\node[node_cube, deficient, label={[black, node_cube_label]above:\tiny$\neck{\{1,\!2,\!3,\!5,\!6,\!7\}}$}] at (01110111) {};
\end{tikzpicture}
\hfill
\input{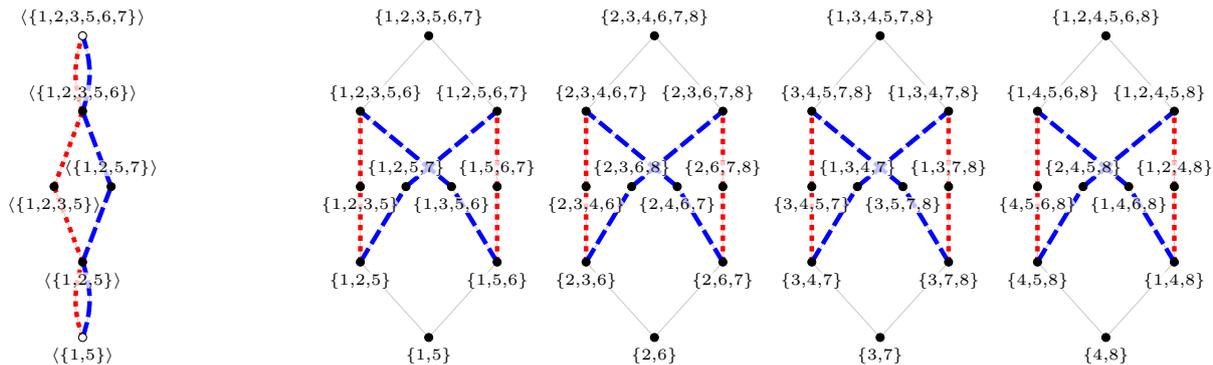}
\caption{Given the two unimodal chains in~$N_8$ (left; the chains are dashed and dotted), it is impossible to unroll them so that the resulting sets of chains are edge-disjoint in~$Q_8$ (right).
In each of the four blocks on the right, extending one of the two dashed chains to include the two extreme elements prevents both dotted chains to be extended.
}
\label{fig:not-unrollable}
\end{figure}

Note that the conclusion of Lemma~\ref{lem:prime-unimodal} does not hold if the dimension~$n$ is not prime, but composite.
The example in Figure~\ref{fig:not-unrollable} shows that even two chains between two deficient necklaces in~$N_n$ cannot always be unrolled so that the resulting sets of chains are edge-disjoint in~$Q_n$.
Consequently, in general it may not be possible to unroll two edge-disjoint SCDs of~$N_n$ to two edge-disjoint SCDs of~$Q_n$.
Nevertheless, the next two lemmas show that unrolling is possible for two known constructions of SCDs of~$N_n$, yielding not only two edge-disjoint SCDs, but even two almost-orthogonal SCDs of~$Q_n$.
Specifically, these constructions are due to Griggs, Killian, and Savage~\cite{GriggsKillianSavage2004} for prime~$n$, and due to Jordan~\cite{Jordan2010} for all~$n$, and they will be explained in the next section.
It is worth to mention that both constructions in general yield different SCDs for prime~$n$; see Figure~\ref{fig:gksj}.

\begin{lemma}
\label{lem:unroll-GKS}
For every prime~$n\geq 2$, the unimodal SCD of~$N_n$ constructed as in~\cite{GriggsKillianSavage2004} and its complement can be unrolled to two almost-orthogonal SCDs of~$Q_n$.
\end{lemma}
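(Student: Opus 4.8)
The plan is to pick a convenient unrolling of $\DGKS$, identify the complement of the resulting SCD with an unrolling of $\ol{\DGKS}$, and then reduce almost-orthogonality to a single substantial claim about pairs of chains of~$Q_n$ plus a handful of elementary computations. First I would record, from the description of $\DGKS$ to be given in Section~\ref{sec:proof-unroll}, that for prime~$n$ the only deficient necklaces are $\neck{\emptyset}$ and $\neck{[n]}$, so $\DGKS$ is automatically unimodal and has exactly one non-full chain, namely $L=\big(\neck{\emptyset}\subset\neck{\{1\}}\subset\neck{\{1,2\}}\subset\dotsb\subset\neck{[n]}\big)$, the image in~$N_n$ of the principal maximal chain of the standard SCD $\DGK$; its full necklaces are precisely the \emph{cyclic-interval necklaces} $\neck{\{1,\dotsc,k\}}$ (the necklace $\neck{\{1,\dotsc,k\}}$ being the set of all cyclic intervals of length~$k$), $1\le k\le n-1$. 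Since $\DGKS$ is a partition, no other chain of $\DGKS$ contains a cyclic-interval necklace, and since complements of cyclic-interval necklaces are again such, the same holds for $\ol{\DGKS}$. Then I would fix the unrolling $D=\RGKS$ as in Observation~\ref{obs:unroll-unimodal} using the natural representatives $\emptyset,\{1\},\{1,2\},\dotsc,[n]$ along~$L$; this unrolls $L$ to the principal maximal chain $M=\big(\emptyset,\{1\},\dotsc,[n]\big)$ together with the $n-1$ truncated chains $S_i:=\sigma^i\big(\{1\}\subset\{1,2\}\subset\dotsb\subset\{1,\dotsc,n-1\}\big)$, $1\le i\le n-1$, while each remaining (full) chain $y$ of $\DGKS$ is unrolled with some valid representative chain $x_y$, giving its $n$ translates $\sigma^i(x_y)$, $i\in\mathbb{Z}_n$. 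By Proposition~\ref{prop:compl}, $\ol D$ is again an SCD, namely the unrolling of $\ol{\DGKS}$ with the complemented representatives, whose chains are $\ol M$, the $\ol{S_i}$, and the translates $\sigma^i(\ol{x_y})$.

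Next comes the reduction. A set $w$ lies in $C\in D$ and in $C'\in\ol D$ exactly when $w\in C$ and the complementary set $\ol w$ lies in $\ol{C'}\in D$; writing the chains in unrolled form and applying $\sigma^{-i}$ shows that the number of sets common to $\sigma^i(x_y)$ and $\sigma^j(\ol{x_z})$ equals the number common to $x_y$ and $\sigma^{\,j-i}(\ol{x_z})$, and analogously for pairs involving the chains coming from~$L$. Hence it suffices to prove: (i) for all full chains $y,z$ of $\DGKS$ and all $t\in\mathbb{Z}_n$, the chains $x_y$ and $\sigma^t(\ol{x_z})$ of~$Q_n$ have at most one set in common; and (ii) every chain among $M,S_1,\dotsc,S_{n-1}$ and every chain of $\ol D$ have at most one set in common, with the sole exception $M\cap\ol M=\{\emptyset,[n]\}$ (permitted, as $M$ and $\ol M$ both have size $n+1$), and symmetrically with the roles of $D$ and $\ol D$ exchanged. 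Part (ii) is elementary: $M$ and the $S_i$ consist only of cyclic intervals (together with $\emptyset$ and $[n]$ for~$M$), so each of them is disjoint from every chain $\sigma^i(\ol{x_z})$ of $\ol D$ (whose sets have necklaces lying on $\ol z$, hence not cyclic-interval), and a short computation with cyclic intervals settles the finitely many remaining pairs, $S_i$ vs $\ol M$, $S_i$ vs $\ol{S_j}$, and $M$ vs $\ol M$, $M$ vs $\ol{S_j}$.

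The substance is part~(i). Here I would use that, as for all necklace-poset constructions mentioned in the introduction, each chain of $\DGKS$ is the image under the quotient map $Q_n\to N_n$ of a sub-chain of a chain of the standard SCD; concretely I would take $x_y$ to be such a sub-chain of a chain $c_y\ne M$ of $\DGK$, and likewise $\ol{x_z}$ a sub-chain of the complement of a chain $c_z\ne M$ of $\DGK$. Two common sets $w\subsetneq w'$ of $x_y$ and $\sigma^t(\ol{x_z})$ would then have to lie on $c_y$ and, after undoing the shift and complementing, the sets $\sigma^{-t}(\ol{w'})\subsetneq\sigma^{-t}(\ol w)$ would have to lie on $c_z$; translating both memberships into the parenthesis-matching description of $\DGK$, the position set $w'\setminus w$ would have to be simultaneously the collection of the first $|w'\setminus w|$ unmatched opening brackets of~$w$ and the $\sigma^t$-image of the corresponding collection for $\sigma^{-t}(\ol{w'})$, and combined with the normalization that singles out the canonical sub-chains I expect this to force $w=w'$. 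The main obstacle is exactly this step, and within it the cyclic shift~$t$: since $\DGK$ is not invariant under~$\sigma$, one cannot simply invoke the Shearer--Kleitman fact that $\DGK$ and $\ol{\DGK}$ are almost-orthogonal, so one must control directly how the matched/unmatched structure of a set and of a rotated complement of a set can overlap, via a case analysis on the positions and lengths of the relevant \emph{active} blocks. Part~(ii), the cases where one of $y,z$ equals~$L$, and the final bookkeeping that assembles everything into almost-orthogonality of $D$ and $\ol D$ are then routine.
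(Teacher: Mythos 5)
Your overall architecture is sound and in fact parallels the paper's: unroll the canonical GKS representative chains, observe that the complemented decomposition is the unrolling of the complemented chains, quotient out the common rotation, and reduce almost-orthogonality to the claim that a representative chain and a rotated complement of a representative chain meet in at most one set (plus the easy bookkeeping for the cyclic-interval chains $M$, $S_i$, $\ol M$, $\ol{S_j}$, which you handle correctly, including the exceptional intersection $M\cap\ol M=\{\emptyset,[n]\}$). But the heart of the matter --- your step (i) --- is not proved. You reduce it to: if $w\subsetneq w'$ lie on one chain of $D_n$, then $\sigma^{-t}(\ol w)$ and $\sigma^{-t}(\ol{w'})$ cannot lie on one chain of $D_n$; and at exactly this point you write that a case analysis on the ``active blocks'' should work and that you \emph{expect} it to force $w=w'$, explicitly calling it the main obstacle. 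That is a statement of intent, not an argument, and it is precisely the content of Proposition~\ref{prop:compl}, whose proof in the paper is a genuine (if short) piece of work: one takes $i=\min U_0(x)$ and $j=\max U_1(y)$, shows $\ol{x}_{i-1}\ol{x}_i=01$, $\ol{y}_{i-1}\ol{y}_i=00$, $\ol{x}_j\ol{x}_{j+1}=11$, $\ol{y}_j\ol{y}_{j+1}=01$, and then uses that matched adjacent pairs $(\ell,\ell+1)$ correspond exactly to the substring $01$ (Lemma~\ref{lem:compl-aux}), so the matchings $M(\sigma^k(\ol x))$ and $M(\sigma^k(\ol y))$ differ for every rotation $k$, and hence the rotated complements lie on different chains of $D_n$.

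Two further remarks. First, you do cite Proposition~\ref{prop:compl}, but only for the trivial assertion that $\ol D$ is again an SCD (which is immediate and has nothing to do with that proposition); had you instead invoked it where it is actually needed --- in step (i) --- your proof would essentially be the paper's, but to do so you must also supply the hypothesis that the GKS representatives other than $0^n,1^n$ have finite block code (Lemma~\ref{lem:finite-gks}), which your write-up never uses. Second, your observation that unimodality is automatic for prime $n$ matches Lemma~\ref{lem:unimod-gks}. So as written the proposal has a genuine gap at the one step that carries the mathematical substance of Lemma~\ref{lem:unroll-GKS}; everything surrounding that step is correct and routine.
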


\begin{lemma}
\label{lem:unroll-J}
For every $n \geq 1$, the unimodal SCD of~$N_n$ constructed as in~\cite{Jordan2010} and its complement can be unrolled to two almost-orthogonal SCDs of~$Q_n$.
\end{lemma}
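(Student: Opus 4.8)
The plan is to fix a concrete unrolling of Jordan's SCD and of its complement, and to verify almost-orthogonality by reducing, via the cyclic symmetry, to a statement about shared cover edges of the standard SCD $\mathcal S$ of~$Q_n$ and of rotated copies of its complement. I recall that $\mathcal S$ is described by the Greene--Kleitman parenthesis-matching rule of Section~\ref{sec:proof-unroll}: writing $x\subseteq[n]$ as the bracket word with a closing bracket in position~$i$ exactly when $i\in x$ and matching brackets non-crossingly, one moves up the chain of~$\mathcal S$ through~$x$ by flipping the leftmost unmatched opening bracket and down by flipping the rightmost unmatched closing bracket. I would use three facts: (i)~all sets on a chain of~$\mathcal S$ share the same matched positions and pairing, so a chain is determined by this matched structure together with the range of the free positions; (ii)~$\mathcal S$ is invariant under simultaneously reversing the coordinate order $1,\dots,n\mapsto n,\dots,1$ and complementing, so $\overline{\mathcal S}$ is exactly the SCD obtained from~$\mathcal S$ by reading bracket words from right to left; (iii)~by~(ii) and the Shearer--Kleitman theorem, no chain of~$\mathcal S$ shares a cover edge with a chain of~$\overline{\mathcal S}$, and the two size-$(n+1)$ chains meet only in~$\emptyset$ and~$[n]$.

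For the unrolling, recall from Section~\ref{sec:proof-unroll} that the chains of Jordan's unimodal SCD~$J$ of~$N_n$ are the $\sigma$-orbits of certain sub-paths of chains of~$\mathcal S$, cut out with respect to a canonical necklace representative. I would unroll each chain $\neck{P}$ of~$J$ along that canonical representative (Observations~\ref{obs:unroll-full}, \ref{obs:unroll-def}, \ref{obs:unroll-unimodal}) to obtain an SCD $\DJo$ of~$Q_n$; for a necklace chain between two deficient necklaces of size~$d$ this yields the $d$ full rotations $\sigma^i(P)$, $0\le i<d$, and $n-d$ rotations of~$P$ with both endpoints deleted. Since $\sigma$ commutes with complementation and with the unrolling recipe, $\RJo:=\overline{\DJo}$ is a valid unrolling of the complement~$\overline J$ of~$J$, and $\sigma(\DJo)=\DJo$, $\sigma(\RJo)=\RJo$ as families of chains. (If the proof is organized so that a complement statement at the level of~$N_n$ is needed, it can be provided by Proposition~\ref{prop:compl}.)

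For the reduction, since chains of~$\DJo$ have the form $\sigma^i(\tilde P)$ and chains of~$\RJo$ the form $\sigma^j(\overline{\tilde P'})$, with $\tilde P,\tilde P'$ sub-paths of chains of~$\mathcal S$ coming from~$J$ (possibly with endpoints trimmed), applying~$\sigma^{-i}$ reduces almost-orthogonality of~$\DJo$ and~$\RJo$ to the claim that $|\tilde P\cap\sigma^k(\overline{\tilde P'})|\le1$ for all such $\tilde P,\tilde P'$ and all $0\le k<n$, the only exception being $\tilde P=\tilde P'=$ size-$(n+1)$ chain and $k=0$, where the intersection is $\{\emptyset,[n]\}$. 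Suppose not: then, both chains being saturated, their intersection is an interval of levels and contains a cover edge~$e$, which is an edge of~$\mathcal S$ lying on~$\tilde P$ and an edge of~$\sigma^k(\overline{\mathcal S})$ lying on~$\sigma^k(\overline{\tilde P'})$. The core step is to analyse the edge set $\mathcal S\cap\sigma^k(\overline{\mathcal S})$: using the Greene--Kleitman rule and fact~(ii), the condition that a set~$x$ has the same next element in~$\mathcal S$ and in~$\sigma^k(\overline{\mathcal S})$ forces the circular pattern of unmatched brackets of~$x$ to be degenerate in a precise way, and one concludes that every such edge is incident to an endpoint of its chain in~$\mathcal S$ or of its chain in~$\sigma^k(\overline{\mathcal S})$.

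It then remains to see that Jordan's cutting rule discards exactly these dangerous edges. For every chain of~$J$ other than the one through~$\neck{\emptyset}$ and~$\neck{[n]}$, the chosen sub-path~$\tilde P$ omits the chain-end edges flagged above from each $\mathcal S$-chain it meets; for the size-$(n+1)$ chain, the rotations $\sigma^i(P)$ with $i\ge1$ have their endpoints $\emptyset,[n]$ removed by Observation~\ref{obs:unroll-def}, so the remaining comparison is $k=0$ between the two size-$(n+1)$ chains, which is fact~(iii). Hence no such edge~$e$ exists, a contradiction, and the lemma follows. I expect the main obstacle to be precisely this last matching step together with the preceding edge analysis: one has to carry out the parenthesis-matching computation for~$x$ under the combined effect of the rotation~$\sigma^k$ and the reverse-complement of fact~(ii) — awkward because neither operation respects bracket matching — and, for composite~$n$, to handle the deficient necklaces at interior levels of~$N_n$, which Jordan's construction turns into extra short chains whose few nontrivial rotations must also be checked to avoid the flagged edges. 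A workable approach for the matching computation is to track the circular list of unmatched brackets of~$x$ directly and argue that the leftmost unmatched opening bracket of~$x$ and of the reversal of~$\sigma^{-k}(x)$ can coincide only when no matched pair of~$x$ crosses the relevant cut, which forces chain-end incidence of~$e$.
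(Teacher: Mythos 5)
Your overall skeleton (unroll along chosen representatives, rotate back by a power of $\sigma$, and compare subchains of the standard Greene--Kleitman SCD with rotated complements of such subchains, using that Jordan's representatives avoid the $D_n$-chain endpoints) is the same reduction the paper uses. However, there are two genuine gaps in the execution. First, your passage from ``two common elements'' to ``a common cover edge'' is invalid: if two saturated chains both contain $u\subset v$ with $|v|-|u|\geq 2$, the two saturated paths they follow from $u$ to $v$ may be entirely different, so their intersection need not be an interval of levels and need not contain any cover edge. Almost-orthogonality is strictly stronger than edge-disjointness (the paper states this explicitly), so an analysis of the shared edges of~$\mathcal S$ and $\sigma^k(\overline{\mathcal S})$ cannot by itself yield the lemma. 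The paper's argument works at the level of \emph{pairs of elements}: for two distinct bitstrings $x,y$ with finite block code on a common chain of~$D_n$, the matched-parenthesis sets $M(\sigma^k(\ol{x}))$ and $M(\sigma^k(\ol{y}))$ are shown to differ (Lemma~\ref{lem:compl-aux}), so $\sigma^k(\ol{x})$ and $\sigma^k(\ol{y})$ lie on different $D_n$-chains (Proposition~\ref{prop:compl}); combined with Lemma~\ref{lem:finite-jordan} (every representative in $\RJo_n$ other than $0^n,1^n$ starts with~$1$ and ends with~$0$) this directly forbids two common elements, with the longest chains handled separately.

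Second, even for the weaker edge statement, your ``core step'' --- that every edge of $\mathcal S\cap\sigma^k(\overline{\mathcal S})$ is incident to a chain endpoint in one of the two decompositions --- is asserted rather than proved, and you yourself flag it as the main obstacle; it is not a consequence of the Shearer--Kleitman result (which concerns only $k=0$) nor of your reverse-complement symmetry, and it is not obviously equivalent to Proposition~\ref{prop:compl}, whose conclusion is phrased via finite block codes rather than chain-end incidence (elements with infinite block code need not be chain endpoints). So the technical heart of the proof is missing. Two smaller points: the unimodality of Jordan's SCD, which you take for granted when invoking Observations~\ref{obs:unroll-full}--\ref{obs:unroll-unimodal}, is itself a nontrivial claim (Lemma~\ref{lem:unimod-jordan} in the paper, needed especially for composite~$n$ because of interior deficient necklaces); and the fact that Jordan's trimmed subchains avoid the $D_n$-chain endpoints is exactly the content of Lemma~\ref{lem:finite-jordan}, which your appeal to ``Jordan's cutting rule'' does not establish. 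With Proposition~\ref{prop:compl}, Lemma~\ref{lem:unimod-jordan} and Lemma~\ref{lem:finite-jordan} in hand, the paper's proof is the chain of implications given for Lemma~\ref{lem:unroll-GKS}, applied verbatim.
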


% Open question: Are those SCDs together with their complements good?
% This is never the case for even $n$, and unclear for odd $n$.
% Here is an example for $n=11$ and the Jordan SCD together with its complement:
% The resulting graph given by the union of the chains of size~2 has 44 single edges, 44 paths of length~2 (two edges each), 1 cycle of length~22 and 1 cycle of length~110.

In Section~\ref{sec:small} we will apply Lemma~\ref{lem:unroll-GKS} to prove the cases~$n=7$ and~$n=11$ in Lemma~\ref{lem:ortho} and we apply Lemma~\ref{lem:unroll-J} to settle the case~$n=10$ in Lemma~\ref{lem:edge}.
Of course, we could simply check by computer whether these concrete small instances can be unrolled, but we still think that the preceding two lemmas are interesting general facts that have not appeared in the literature before.

The proof of Lemmas~\ref{lem:unroll-GKS} and~\ref{lem:unroll-J} is rather long and technical, and will be given in the next section.
It is followed by Section~\ref{sec:sat}, where we describe our computer search for SCDs of the necklace poset using a SAT solver.
The reader may want to skip these parts for the moment, and continue in Section~\ref{sec:small} with the proofs of Lemmas~\ref{lem:ortho} and \ref{lem:edge}.

\subsection{Proofs of Lemmas~\ref{lem:unroll-GKS} and~\ref{lem:unroll-J}}
\label{sec:proof-unroll}

In the remainder of this section we represent subsets of~$[n]$ by their characteristic $\{0,1\}$-strings of length~$n$.
The $i$th entry of a bitstring~$x$ is denoted by~$x_i$.
For instance, the set $x=\{1,3,5,6\}\subseteq [6]$ is represented by the bitstring $x=x_1\ldots x_6=101011\in\{0,1\}^6$.
The operation~$\sigma(x)$ on the set~$x$ translates to a cyclic right-rotation of the bitstring~$x$.
Moreover, we write $|x|$ for the number of~1s in~$x$, which is the same as the level of~$x$ in~$Q_n$.
Also, for any bitstring~$x$ and any integer~$r\geq 0$, we write $x^r$ for the concatenation of $r$ copies of~$x$.

We begin by recapitulating the SCD constructions in the $n$-cube and the necklace poset described in the three papers~\cite{GreeneKleitman1976,GriggsKillianSavage2004,Jordan2010}.
The first construction by Greene and Kleitman is used as an auxiliary construction for the other two constructions, which we need for proving Lemmas~\ref{lem:unroll-GKS} and~\ref{lem:unroll-J}.
For the reader's convenience, the Greene-Kleitman construction is illustrated in Figure~\ref{fig:paren} for one particular chain, and the other two constructions are illustrated in Figure~\ref{fig:gksj} for~$n=7$.

\paragraph{The Greene-Kleitman construction in~$Q_n$}

Greene and Kleitman~\cite{GreeneKleitman1976} proposed the following explicit construction of an SCD of~$Q_n$.
Given any bitstring~$x$ of length~$n$, we think of every 0-bit as an opening parenthesis, and every 1-bit as a closing parenthesis, and we match closest pairs of opening and closing parentheses in the natural way; see Figure~\ref{fig:paren}.
We let $M(x)$ be the set of all index pairs corresponding to matched parentheses in~$x$, and we let $U_0(x)$ and~$U_1(x)$ be the index sets of unmatched opening and closing parentheses, respectively.
The length of~$x$ clearly satisfies $n=2|M(x)|+|U_0(x)|+|U_1(x)|$.
For any~$x$ with $U_0(x) \neq \emptyset$, we let $\tau(x)$ be the bitstring obtained from $x$ by flipping the leftmost unmatched~0 to a~1.
The union of all chains $(x, \tau(x), \ldots, \tau^k(x))$, where $x\in\{0,1\}^n$ with $U_1(x) = \emptyset$ and $k = |U_0(x)|$, forms an SCD of the $n$-cube for all~$n\geq 1$.
This follows easily from the observation that we have $M(x)=M(\tau(x))=\cdots=M(\tau^k(x))$ and $U_0(\tau^k(x))=\emptyset$ along each such chain, so the chain is uniquely determined by its matched pairs of parentheses.
We denote this SCD by~$D_n$.
This is exactly the standard SCD of the $n$-cube mentioned in the introduction.

\begin{figure}
\centering
\begin{tikzpicture}[every node/.style={gray, inner sep=1pt}]
 \pgfsetmatrixcolumnsep{0pt}
 \pgfsetmatrixrowsep{0pt}
 \matrix (m) [matrix of nodes] {
  |[black]|1&|[black]|1&|[black]|1&|[black]|1&|[black]|1&|[black]|0&|[black]|1&|[black]|1&|[black]|0&|[black]|1&|[black]|1&|[black]|0&|[black]|1&|[black]|0&|[black]|0&|[black]|1&|[black]|1&|[black]|1&|[black]|1&|[black]|1&|[black]|0&|[black]|1 \\[-1pt]
  |[black]|\footnotesize\texttt)&|[black]|\footnotesize\texttt)&|[black]|\footnotesize\texttt)&|[black]|\footnotesize\texttt)&|[black]|\footnotesize\texttt)&|[black]|\footnotesize\texttt(&|[black]|\footnotesize\texttt)&|[black]|\footnotesize\texttt)&|[black]|\footnotesize\texttt(&|[black]|\footnotesize\texttt)&|[black]|\footnotesize\texttt)&|[black]|\footnotesize\texttt(&|[black]|\footnotesize\texttt)&|[black]|\footnotesize\texttt(&|[black]|\footnotesize\texttt(&|[black]|\footnotesize\texttt)&|[black]|\footnotesize\texttt)&|[black]|\footnotesize\texttt)&|[black]|\footnotesize\texttt)&|[black]|\footnotesize\texttt)&|[black]|\footnotesize\texttt(&|[black]|\footnotesize\texttt) \\[5pt]
  1&1&1&1&1&0&1&1&0&1&1&0&1&0&0&1&1&1&|[black]|1&|[black]|0&0&1 \\
  1&1&1&1&1&0&1&1&0&1&1&0&1&0&0&1&1&|[black]|1&|[black]|0&0&0&1 \\
  1&1&1&1&1&0&1&1&0&1&|[black]|1&0&1&0&0&1&1&|[black]|0&0&0&0&1 \\
  1&1&1&1&1&0&1&|[black]|1&0&1&|[black]|0&0&1&0&0&1&1&0&0&0&0&1 \\
  1&1&1&1&|[black]|1&0&1&|[black]|0&0&1&0&0&1&0&0&1&1&0&0&0&0&1 \\
  1&1&1&|[black]|1&|[black]|0&0&1&0&0&1&0&0&1&0&0&1&1&0&0&0&0&1 \\
  1&1&|[black]|1&|[black]|0&0&0&1&0&0&1&0&0&1&0&0&1&1&0&0&0&0&1 \\
  1&|[black]|1&|[black]|0&0&0&0&1&0&0&1&0&0&1&0&0&1&1&0&0&0&0&1 \\
  |[black]|1&|[black]|0&0&0&0&0&1&0&0&1&0&0&1&0&0&1&1&0&0&0&0&1 \\[5pt]
  |[black]|0&|[black]|0&|[black]|0&|[black]|0&|[black]|0&|[black]|0&|[black]|1&|[black]|0&|[black]|0&|[black]|1&|[black]|0&|[black]|0&|[black]|1&|[black]|0&|[black]|0&|[black]|1&|[black]|1&|[black]|0&|[black]|0&|[black]|0&|[black]|0&|[black]|1 \\[-1pt]
  |[black]|\footnotesize\texttt(&|[black]|\footnotesize\texttt(&|[black]|\footnotesize\texttt(&|[black]|\footnotesize\texttt(&|[black]|\footnotesize\texttt(&|[black]|\footnotesize\texttt(&|[black]|\footnotesize\texttt)&|[black]|\footnotesize\texttt(&|[black]|\footnotesize\texttt(&|[black]|\footnotesize\texttt)&|[black]|\footnotesize\texttt(&|[black]|\footnotesize\texttt(&|[black]|\footnotesize\texttt)&|[black]|\footnotesize\texttt(&|[black]|\footnotesize\texttt(&|[black]|\footnotesize\texttt)&|[black]|\footnotesize\texttt)&|[black]|\footnotesize\texttt(&|[black]|\footnotesize\texttt(&|[black]|\footnotesize\texttt(&|[black]|\footnotesize\texttt(&|[black]|\footnotesize\texttt) \\
 };
 \draw ($(m-13-6.south)+(0,0.1)$) -- ($(m-13-6.south)-(0,0.15)$) -- ($(m-13-7.south)-(0,0.15)$) -- ($(m-13-7.south)+(0,0.1)$);
 \draw ($(m-13-9.south)+(0,0.1)$) -- ($(m-13-9.south)-(0,0.15)$) -- ($(m-13-10.south)-(0,0.15)$) -- ($(m-13-10.south)+(0,0.1)$);
 \draw ($(m-13-12.south)+(0,0.1)$) -- ($(m-13-12.south)-(0,0.15)$) -- ($(m-13-13.south)-(0,0.15)$) -- ($(m-13-13.south)+(0,0.1)$);
 \draw ($(m-13-15.south)+(0,0.1)$) -- ($(m-13-15.south)-(0,0.15)$) -- ($(m-13-16.south)-(0,0.15)$) -- ($(m-13-16.south)+(0,0.1)$);
 \draw ($(m-13-14.south)+(0,0.1)$) -- ($(m-13-14.south)-(0,0.25)$) -- ($(m-13-17.south)-(0,0.25)$) -- ($(m-13-17.south)+(0,0.1)$); 
 \draw ($(m-13-21.south)+(0,0.1)$) -- ($(m-13-21.south)-(0,0.15)$) -- ($(m-13-22.south)-(0,0.15)$) -- ($(m-13-22.south)+(0,0.1)$);
 \draw (m-12-1.north west) rectangle (m-13-22.south east);
 \draw (m-1-1.north west) rectangle (m-2-22.south east);
 
\begin{scope}[on background layer]
 \fill[red!50] (m-9-3.north west) rectangle (m-10-3.south east);
 \fill[red!50] (m-10-2.north west) rectangle (m-11-2.south east);
 \fill[red!50] (m-11-1.north west) rectangle (m-13-1.south east);
 \fill[red!50] (m-8-4.north west) rectangle (m-9-4.south east);
 \fill[red!50] (m-7-5.north west) rectangle (m-8-5.south east);
 \fill[red!50] (m-6-8.north west) rectangle (m-7-8.south east);
 \fill[red!50] (m-5-11.north west) rectangle (m-6-11.south east);
 \fill[red!50] (m-4-18.north west) rectangle (m-5-18.south east);
 \fill[red!50] (m-3-19.north west) rectangle (m-4-19.south east);
 \fill[red!50] (m-1-20.north west) rectangle (m-3-20.south east);
 \foreach \i in {6,7,9,10,12,13,14,15,16,17,21,22}
  \fill[gray!20] (m-1-\i.north west) rectangle (m-13-\i.south east);
\end{scope}

\node[left,anchor=east,black,xshift=-2mm] at (m-12-1) {\small $x={}$};
\node[left,anchor=east,black,xshift=-2mm] at (m-11-1) {\small $\tau(x)={}$};
\node[left,anchor=east,black,xshift=-2mm] at (m-1-1) {\small $\tau^{10}(x)={}$};
\node[right,black] at (3.4,-1) {\small $M(x)=\{(6,\!7),\!(9,\!10),\!(12,\!13),\!(14,\!17),\!(15,\!16),\!(21,\!22)\}$};
\node[right,black] at (3.4,-1.5) {\small $U_0(x)=\{1,2,3,4,5,8,11,18,19,20\}$};
\node[right,black] at (3.4,-2) {\small $U_1(x)=\emptyset$};
\node[right,black] at (3.4,2) {\small $M(\tau^{10}(x))=M(x)$};
\node[right,black] at (3.4,1.5) {\small $U_0(\tau^{10}(x))=\emptyset$};
\node[right,black] at (3.4,1) {\small $U_1(\tau^{10}(x))=U_0(x)$};
\end{tikzpicture}
\caption{The parenthesis matching approach for constructing the symmetric chain containing a bitstring~$x\in Q_{22}$.}
\label{fig:paren}
\end{figure}
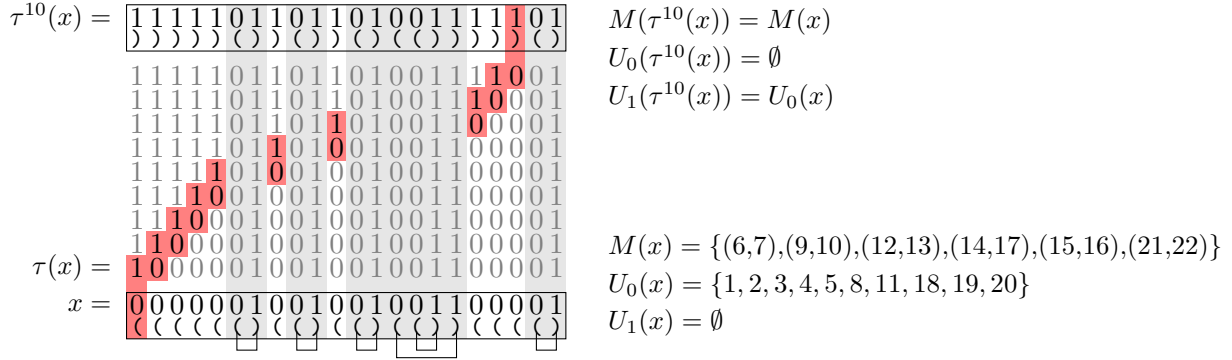

\paragraph{The Griggs-Killian-Savage construction in~$N_n$ for prime~$n$}

\begin{figure}[t!]
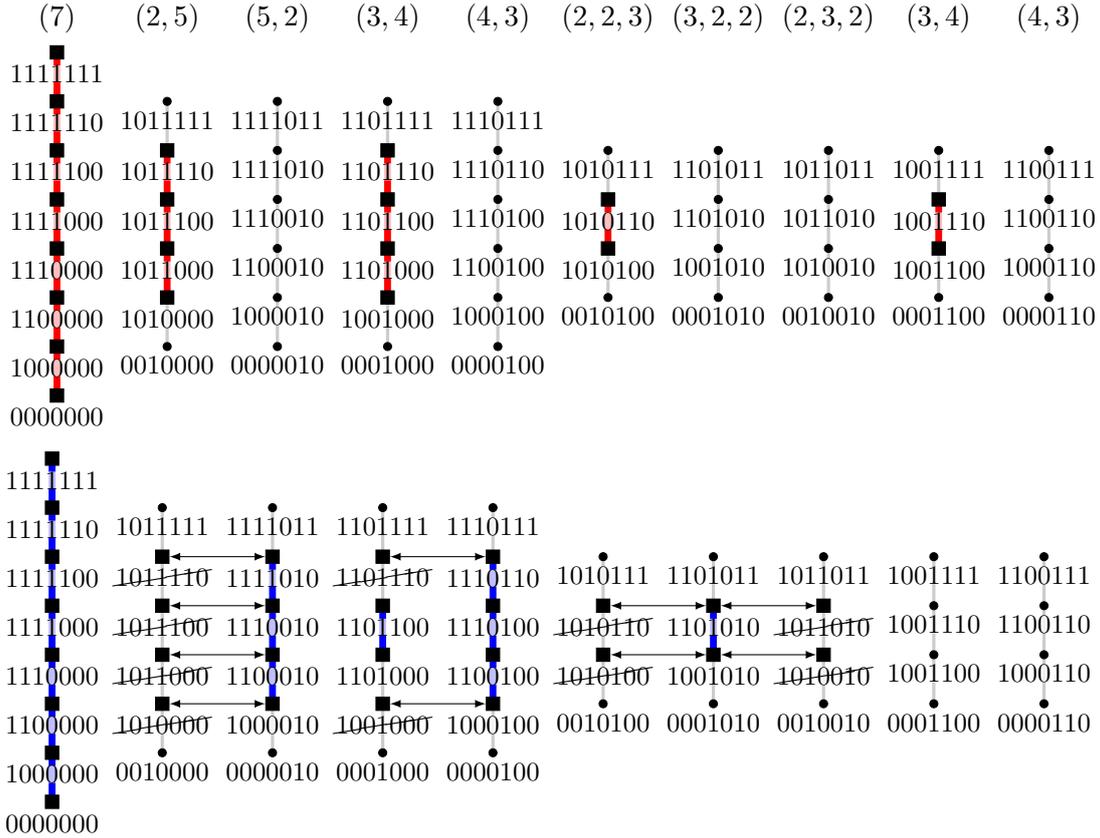

\centering
\input{figures/gks.tex}\\[.8em]
\input{figures/jordan.tex}
\caption{Illustration of the Griggs-Killian-Savage construction (top) and the Jordan construction (bottom) of an SCD of~$N_n$ for $n=7$, by trimming the Greene-Kleitman SCD of~$Q_n$.
The figure shows only the chains from the standard SCD~$D_n$ that contain a bitstring with finite block code, all other chains from~$D_n$ are omitted as they do not contribute to either construction (see Lemmas~\ref{lem:finite-gks} and \ref{lem:finite-jordan}).
The block codes of the inner bitstrings of each chain are shown at the very top; the block codes of all chain endpoints are~$(\infty)$.
Top: We select the bitstrings with lexicographically minimal block code in their necklace (marked by squares) as representatives~$\RGKS_n$.
Bottom: We select all bitstrings with the maximum number of unmatched 1s in their necklace (marked by squares) as representatives~$\RJo_n$.
Representatives of the same necklace with the same number of unmatched 1s are highlighted by double arrows.
If this occurs on two chains of the same length, then the trimming procedure applied in the Jordan construction is not unique.
Note that by the trimming (struck through bitstrings), some chains disappear entirely.
Also observe that the SCDs of~$N_n$ resulting from the two constructions (bold edges) are distinct.
}
\label{fig:gksj}
\end{figure}

To construct an SCD of~$N_n$ for prime~$n$, Griggs, Killian, and Savage~\cite{GriggsKillianSavage2004} use the standard SCD~$D_n$ in~$Q_n$ as a starting point and select subchains of~$D_n$, such that exactly one representative of each necklace is contained in one of the subchains.

For this purpose we define, for any $x\in\{0,1\}^n$, the \emph{block code~$\beta(x)$} as follows:
If $x$ has the form $x = 1^{a_1}0^{b_1}1^{a_2}0^{b_2}\cdots 1^{a_r}0^{b_r}$ with $r\geq 1$ and $a_i,b_i\geq 1$ for all $i=1,\ldots,r$, then $\beta(x) := (a_1 + b_1, a_2 + b_2, \ldots, a_r + b_r)$.
Otherwise we define $\beta(x) := (\infty)$.
If $\beta(x)\neq (\infty)$, then we say that the block code of~$x$ is \emph{finite}.
Note that the block code is finite if and only if $x$ starts with~1 and ends with~0.
Observe also that for all chains from the standard SCD~$D_n$, the block code of all chain endpoints is~$(\infty)$, whereas along the inner bitstrings of each chain, we see the same finite block code along the chain; see Figure~\ref{fig:gksj}.
For prime~$n$, we let $\RGKS_n\subseteq \{0,1\}^n$ be the set of all necklace representatives whose block code is lexicographically minimal in their necklace.
As $n$ is prime, this gives exactly one representative per necklace.\footnote{If $n$ is composite, the method fails, as there may be several necklace representatives with the same block code, e.g., $x=100110$ and $y=110100$ with $\beta(x)=\beta(y)=(3,3)$.}
It was shown in~\cite{GriggsKillianSavage2004} that the representatives $\RGKS_n$ induce symmetric and saturated subchains of~$D_n$, and we denote these subchains by~$\DGKS_n$.
Clearly, the corresponding chains in the necklace poset form an SCD of~$N_n$.

\paragraph{The Jordan construction in~$N_n$ for arbitrary~$n$}

Jordan's construction~\cite{Jordan2010} of an SCD of~$N_n$ for arbitrary~$n$ also uses the standard SCD~$D_n$ in~$Q_n$ as a starting point, but selects subchains in a different fashion.
We let $\RJo_n$ be the set of all necklace representatives that have the maximum number of unmatched~1s in their necklace.
(It is easy to see that those also have finite block code---see Lemma~\ref{lem:finite-jordan}---but this is irrelevant for the moment.)
Note that $\RJo_n$ may contain several representatives from the same necklace; see Figure~\ref{fig:gksj}.
It was shown in~\cite{Jordan2010} that these representatives $\RJo_n$ induce symmetric and saturated subchains of~$D_n$.
We now search for pairs of chains that contain two representatives of the same necklace.
Jordan showed in her paper that these duplicates always lie symmetrically at the ends of both chains, so we may trim the shorter of the two chains symmetrically at both ends.
If both chains have the same size, then we trim any of the two, yielding different resulting subchains.
We repeat this trimming process until each necklace has only a single representative left, and we denote the remaining subchains of~$D_n$ by~$\DJo_n$.
Clearly, the corresponding chains in the necklace poset form an SCD of~$N_n$.
We emphasize again that the outcome of the trimming procedure is not unique, but could be made unique by some lexicographic tie-breaking rule.

\paragraph{Proof of Lemmas~\ref{lem:unroll-GKS} and~\ref{lem:unroll-J}}

The following statements are the main steps for proving Lemmas~\ref{lem:unroll-GKS} and~\ref{lem:unroll-J}.
The proofs of these statements are deferred to the next subsection.

Our first proposition will be used to show that the cyclic rotations of the complement of any subchain of a chain in the standard decomposition~$D_n$ are almost-orthogonal to all other chains in~$D_n$, and it can thus be seen as a generalization of the results of Shearer and Kleitman~\cite{ShearerKleitman1979}.

\begin{proposition}
\label{prop:compl}
Consider two distinct bitstrings~$x$ and $y$ with finite block code that lie on the same chain of~$D_n$.
Then for every $k\geq 0$, the bitstrings $\sigma^{k}(\ol{x})$ and $\sigma^{k}(\ol{y})$ do \emph{not} lie on the same chain of~$D_n$.
\end{proposition}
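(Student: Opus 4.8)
The plan is to argue by contradiction after translating the hypotheses into the parenthesis-matching language used to build~$D_n$. Recall that two bitstrings lie on a common chain of~$D_n$ if and only if they have the same set of matched pairs, and that a bitstring has finite block code exactly when it starts with a~$1$ and ends with a~$0$. So, setting $I:=\{p\in[n]:x_p\neq y_p\}$ and assuming without loss of generality $|x|<|y|$ (the two are distinct and comparable), the hypothesis says that $M(x)=M(y)$, that $x$ carries $0$s and $y$ carries $1$s on all of~$I$, that $I$ is the set of the $|I|$ leftmost unmatched $0$-positions of~$x$ (equivalently the $|I|$ rightmost unmatched $1$-positions of~$y$), and that $x_1=y_1=1$ and $x_n=y_n=0$; in particular $1\notin I$ and $n\notin I$. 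It is also worth recording, as a structural input, that flipping the leftmost unmatched $0$ always turns the first $0$ of some maximal $0$-run $0^{b}$ with $b\geq 2$ into a~$1$ and hence never crosses a block boundary, so $x$ and~$y$ in fact have the same block code and $I$ meets no block boundary of~$x$. The task then becomes to show that $M(\sigma^{k}(\ol x))\neq M(\sigma^{k}(\ol y))$ for every $k\geq 0$.

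The case $k=0$ is, in effect, the Shearer--Kleitman theorem~\cite{ShearerKleitman1979}: if $M(\ol x)=M(\ol y)$ held, then the chain of~$D_n$ through $\ol x,\ol y$ and the complement of the chain of~$D_n$ through $x,y$ would share the two distinct elements $\ol x,\ol y$, which is impossible for one chain of~$D_n$ and one chain of~$\ol{D_n}$ unless the shared elements are $\emptyset$ and~$[n]$; but $x$ starts with a~$1$ and ends with a~$0$, so $x\notin\{\emptyset,[n]\}$ and therefore $\ol x,\ol y\notin\{\emptyset,[n]\}$. Thus the genuinely new content of the proposition is for $k\geq 1$, and this is where I expect the real work to lie.

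For arbitrary~$k$ I would pass to the bi-infinite periodic picture. Extend a bitstring~$z$ to a period-$n$ sequence and let $h^{z}\colon\mathbb{Z}\to\mathbb{Z}$ be the lattice walk that steps $+1$ on each~$0$ and $-1$ on each~$1$, so that $h^{z}_{t+n}=h^{z}_{t}+d^{z}$ with $d^{z}$ the number of $0$s minus the number of $1$s in~$z$. In this picture the matched pairs of the linear word $\sigma^{k}(z)$ are exactly the pairs $(i,j)$ with $k<i<j\leq k+n$, $h^{z}_{i-1}=h^{z}_{j}$ and $h^{z}_{\ell}>h^{z}_{i-1}$ for all $i\leq\ell<j$, while the unmatched parentheses of $\sigma^{k}(z)$ are read off from the running extrema of $h^{z}$ over the window $[k,k+n]$; complementation simply negates the walk, $h^{\ol z}=-h^{z}$. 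Now assume for contradiction that $M(\sigma^{k}(\ol x))=M(\sigma^{k}(\ol y))$. Then $J:=(I+k)\bmod n$ is the set of positions where $\sigma^{k}(\ol x)$ and $\sigma^{k}(\ol y)$ differ, and, exactly as for $k=0$, $J$ must be the $|I|$ leftmost unmatched $0$-positions of $\sigma^{k}(\ol y)$ and the $|I|$ rightmost unmatched $1$-positions of $\sigma^{k}(\ol x)$. Through the periodic picture this pins down the set~$I$, viewed inside~$\ol y$ and inside~$\ol x$, as a prescribed set of window-extrema of $-h^{y}$ and of $-h^{x}$ over the length-$n$ window beginning at~$k$. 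On the other hand, the hypothesis $M(x)=M(y)$ pins down the same set~$I$ as a set of window-extrema of $h^{x}$ and of $h^{y}$ over the window $[0,n]$. Comparing $h^{x}$ with $h^{y}$ — they agree up to the first position of~$I$, and beyond it $h^{y}$ is $h^{x}$ lowered by twice the number of $I$-positions already passed, modulo the global drift over full periods — one should be able to conclude that these two descriptions of~$I$ can be reconciled only if $I$ touches position~$1$ or position~$n$, or a block boundary of~$x$, or $I=\emptyset$, all of which are excluded; this is the desired contradiction.

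The main obstacle, as I see it, is making this last comparison airtight, and it is genuinely about the interplay of rotation with the $n$-periodicity: the walks of~$y$ and~$\ol y$ have opposite, and generically nonzero, drift, so a cyclic shift by~$k$ changes which period's copy of~$I$ is encountered first, and one must rule out any accidental alignment for \emph{every}~$k$. One natural way to tame the bookkeeping is to reduce first to the case $|I|=1$, using that every bitstring strictly between $x$ and~$y$ on the chain still has finite block code (because $1,n\notin I$), but some care is needed since ``lying on a common chain'' is not literally inherited by such refinements; carrying out this reduction and the ensuing window-extremum comparison, rather than any single clever step, is what I expect to make the proof long.
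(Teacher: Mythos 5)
Your setup is fine (the translation into matchings, the fact that $x_1=y_1=1$, $x_n=y_n=0$, and that the flipped positions are the leftmost unmatched $0$s of~$x$ and the rightmost unmatched $1$s of~$y$), and your $k=0$ case via Shearer--Kleitman is correct. But the actual content of the proposition is the claim for \emph{all} rotations $k$, and there your argument stops at a plan: the decisive step is literally ``one should be able to conclude'' that the two descriptions of~$I$ as window extrema of the height functions cannot be reconciled, and you yourself flag that making this comparison airtight is the open obstacle. As written, nothing rules out an accidental alignment for some $k\geq 1$, so this is a genuine gap, not a detail. The auxiliary reduction you propose does not repair it either: proving the statement for consecutive pairs ($|I|=1$) does not imply it for $x$ and $y$, because the chain of~$D_n$ that would hypothetically contain $\sigma^{k}(\ol{x})$ and $\sigma^{k}(\ol{y})$ need not pass through the complements of the intermediate elements $\tau^{t}(x)$, so ``lying on a common chain'' cannot be chained through the refinement.

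For comparison, the paper closes exactly this step with a short local argument rather than a global walk comparison. With $i=\min U_0(x)$ and $j=\max U_1(y)$ one has $x_{i-1}x_i=10$, $y_{i-1}y_i=11$, $x_jx_{j+1}=00$, $y_jy_{j+1}=10$ (using $x_1=y_1=1$, $x_n=y_n=0$ to ensure $1<i\leq j<n$), hence after complementation $\ol{x}_{i-1}\ol{x}_i=01$, $\ol{y}_{i-1}\ol{y}_i\neq 01$, $\ol{y}_j\ol{y}_{j+1}=01$, $\ol{x}_j\ol{x}_{j+1}\neq 01$. Since $(\ell,\ell+1)$ is a matched pair of a string $z$ exactly when $z_\ell z_{\ell+1}=01$, and a cyclic rotation by $k$ can destroy at most one of these two adjacent pairs (the one straddling the cut), the matchings $M(\sigma^{k}(\ol{x}))$ and $M(\sigma^{k}(\ol{y}))$ differ for every $k\geq 0$ (this is Lemma~\ref{lem:compl-aux}), and the proposition follows because each chain of~$D_n$ is determined by its matching. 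If you want to salvage your approach, you would need to carry out the window-extremum comparison in full, uniformly in $k$ and in the drifts of the two walks; the two-adjacent-pairs observation above is the much cheaper route.
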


The next two lemmas capture crucial properties of the subchains~$\DGKS_n$ of~$D_n$ obtained from the Griggs-Killian-Savage construction.

\begin{lemma}
\label{lem:unimod-gks}
For every prime~$n\geq 2$ and every chain from $\DGKS_n$, the corresponding necklaces form a unimodal chain in~$N_n$.
\end{lemma}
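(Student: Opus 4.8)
The plan is to deduce the statement from two facts that are already on the table: that $\DGKS_n$ consists of symmetric and saturated subchains of the standard decomposition~$D_n$ (the result of Griggs, Killian and Savage~\cite{GriggsKillianSavage2004} recalled just above), and that for prime~$n$ the only deficient necklaces of~$N_n$ are $\neck{\emptyset}$ and $\neck{[n]}$, all other necklaces being full (recorded in Section~\ref{sec:unroll}). Once these are granted, unimodality is essentially forced by the coarse structure of~$N_n$.

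First I would fix a chain $C\in\DGKS_n$ and write it as $C=(z_k,z_{k+1},\ldots,z_{n-k})$, where $z_i$ is the bitstring of~$C$ on level~$i$; here $0\le k\le\lfloor n/2\rfloor$, the index set is an interval because $C$ is saturated, and it is centred at~$n/2$ because $C$ is symmetric. Next I would check that passing to necklaces neither collapses nor shortens~$C$: the map $z\mapsto\neck{z}$ is injective on~$C$ since $\sigma$ preserves the level and the~$z_i$ lie on pairwise distinct levels, and each pair $z_i\subseteq z_{i+1}$ is a cover relation of~$Q_n$, hence $(\neck{z_i},\neck{z_{i+1}})$ is a cover relation of~$N_n$ by the definition of the necklace poset. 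Therefore $\neck{C}:=(\neck{z_k},\ldots,\neck{z_{n-k}})$ is a saturated chain of~$N_n$ spanning exactly the levels $k$ through $n-k$, and it remains to show it is unimodal.

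The verification then splits into two cases according to whether the lower endpoint level~$k$ is positive. If $k\ge 1$, then every level occurring in~$C$ lies in $\{1,\ldots,n-1\}$, so none of the~$z_i$ equals $0^n$ or $1^n$; since~$n$ is prime, every such $\neck{z_i}$ is full, so $\neck{C}$ consists entirely of full necklaces and is unimodal (its two endpoints are full, hence of the common size~$n$). If $k=0$, then $C$ is a maximal chain of~$Q_n$, so $z_0=0^n$ and $z_n=1^n$; the two endpoints $\neck{\emptyset}$ and $\neck{[n]}$ are deficient of the common size~$1$, while every interior $z_i$ with $1\le i\le n-1$ gives a full necklace as before. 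In both cases $\neck{C}$ is unimodal, as desired.

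I do not expect a genuine obstacle in this lemma. The only points that deserve a word of care are the injectivity/cover-relation check that makes $\neck{C}$ an honest saturated chain of the same length as~$C$, and the observation that symmetry of~$C$ is precisely what pins the two endpoint levels to~$k$ and~$n-k$, so that a deficient endpoint on one side ($k=0$) forces a deficient endpoint of the same size on the other side ($n-k=n$)---without symmetry one could imagine a saturated subchain running from level~$0$ up to some level $<n$, whose image in~$N_n$ would fail to be unimodal. (It is also reassuring, though not needed for the proof, that the unique maximal chain $0^n,10^{n-1},\ldots,1^{n-1}0,1^n$ of~$D_n$ does lie in~$\DGKS_n$, since each $1^j0^{n-j}$ is the unique necklace representative of finite block code in its necklace; this is the single chain of~$\DGKS_n$ whose endpoints are deficient.)
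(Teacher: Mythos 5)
Your proof is correct and rests on exactly the same observation as the paper's (one-line) argument: for prime~$n$ the only deficient necklaces are $\neck{\emptyset}$ and $\neck{[n]}$, so a symmetric saturated chain either consists entirely of full necklaces or runs from level~$0$ to level~$n$ with both deficient endpoints of size~$1$. You simply spell out the routine details (injectivity of $z\mapsto\neck{z}$ along the chain, the cover relations, and the case split on~$k$) that the paper leaves implicit.
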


\begin{lemma}
\label{lem:finite-gks}
For every prime~$n\geq 2$, all necklace representatives in~$\RGKS_n$ except $0^n$ and $1^n$ have finite block code.
\end{lemma}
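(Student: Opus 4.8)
The plan is to reduce the statement to two elementary facts about block codes. First, recall that $\beta(x)$ is finite precisely when $x$ starts with a $1$ and ends with a $0$. Second, in the lexicographic order used to define $\RGKS_n$, the value $(\infty)$ is larger than every finite block code, since the first entry $a_1+b_1\ge 2$ of a finite block code is a finite positive integer. From these two facts it follows that, within any fixed necklace, if one of its representatives has finite block code then the lexicographically minimal block code over the necklace is finite, and hence every representative of that necklace lying in $\RGKS_n$ has finite block code. Thus the lemma reduces to the claim that every necklace different from $\neck{0^n}$ and $\neck{1^n}$ contains a representative with finite block code.

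To prove this claim I would argue as follows. Let $\neck{x}$ be such a necklace; then $x$ contains at least one $0$ and at least one $1$, so the cyclic string $x$ is non-constant and therefore contains a maximal run of consecutive $1$s, say at coordinates $p,\dots,p+a-1$ modulo $n$, with $x_{p-1}=0$. Rotating $x$ so that coordinate $p$ becomes the first coordinate produces a representative $y\in\neck{x}$ that begins with $1^a$ and ends with $x_{p-1}=0$; hence $y$ has finite block code. For the two exceptional necklaces $\neck{0^n}$ and $\neck{1^n}$ (which are exactly the deficient necklaces, since $n$ is prime), the unique representatives $0^n$ and $1^n$ have block code $(\infty)$, matching the exception allowed in the statement.

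I do not expect any real obstacle: the result is a direct unwinding of the definitions. The only point that should be made explicit is the lexicographic comparison, namely that $(\infty)$ is treated as larger than every finite block code — this is precisely what forces the representative selected for $\RGKS_n$ to be a \emph{nice} one, i.e., of the form $1^{a_1}0^{b_1}\cdots 1^{a_r}0^{b_r}$. Primality of $n$ plays no role in the core argument (it only ensures that the selection yields a single representative per necklace and that no further necklaces are deficient), but we retain the hypothesis since $\RGKS_n$ is defined only for prime $n$.
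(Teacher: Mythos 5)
Your proof is correct and follows essentially the same route as the paper: the paper also locates a cyclic occurrence of $01$ (you phrase it via a maximal run of $1$s preceded by a $0$) and rotates so the string starts with $1$ and ends with $0$, giving a representative with finite block code, whence the lexicographically minimal block code in the necklace is finite. Your explicit remark that $(\infty)$ is treated as larger than every finite block code is left implicit in the paper but is exactly the right point to spell out.
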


The next two lemmas are the analogous statements for the subchains $\DJo_n$ obtained from the Jordan construction.

\begin{lemma}
\label{lem:unimod-jordan}
For every $n\geq 1$ and every chain from $\DJo_n$, the corresponding necklaces form a unimodal chain in~$N_n$.
\end{lemma}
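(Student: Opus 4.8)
The plan is to mimic the proof of the Griggs--Killian--Savage case (Lemma~\ref{lem:unimod-gks}), but to work harder: for composite~$n$ the necklace poset~$N_n$ has many deficient necklaces, not only~$\neck{0^n}$ and~$\neck{1^n}$. First the setup. Fix a chain~$\Gamma$ of~$\DJo_n$. By definition it is a contiguous subchain of some chain $C=(x^{(0)},\dotsc,x^{(k)})$ of the standard decomposition~$D_n$, where $x^{(i)}=\tau^i(x^{(0)})$, $U_1(x^{(0)})=\emptyset$, $k=|U_0(x^{(0)})|$, the set of matched pairs $M:=M(x^{(i)})$ is the same for all~$i$, and $x^{(i)}$ has exactly~$i$ unmatched~$1$s. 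Since restricting~$D_n$ to~$\RJo_n$ yields (by Jordan) subchains symmetric around the middle of~$C$, and the trimming step removes equally many elements from the two ends, we have $\Gamma=(x^{(a)},\dotsc,x^{(k-a)})$ for some $a\ge 0$, so the associated chain $(\neck{x^{(a)}},\dotsc,\neck{x^{(k-a)}})$ in~$N_n$ is automatically saturated and symmetric. It remains to show: \textup{(a)} every \emph{interior} necklace $\neck{x^{(i)}}$ with $a<i<k-a$ is full; and \textup{(b)} $|\neck{x^{(a)}}|=|\neck{x^{(k-a)}}|$. If $a=k-a$ then~$\Gamma$ has length~$1$ and there is nothing to prove, so assume $a<k-a$.

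For~\textup{(b)}, write $w^R$ for the reversal of a bitstring~$w$ and set $\rho(w):=\ol{w^{\,R}}$, an involution on~$\{0,1\}^n$. One checks directly that $\rho$ maps~$D_n$ onto itself and reverses each of its chains: it turns a matched pair $(p,q)$ into the matched pair $(n{+}1{-}q,\,n{+}1{-}p)$ and interchanges unmatched~$0$s with unmatched~$1$s, hence it carries~$C$ onto the $D_n$-chain~$\rho(C)$ and sends~$x^{(i)}$ to the element of~$\rho(C)$ at level $n-(|M|+i)=|M|+(k-i)$. One then verifies, from the explicit description of the chains of~$D_n$ in terms of their matched pairs, that~$C$ and~$\rho(C)$ pass through the same necklaces level by level, i.e.\ $\rho(x^{(i)})\in\neck{x^{(k-i)}}$ for all~$i$. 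Since~$\rho$ is the composition of complementation (which commutes with the cyclic shift~$\sigma$) and reversal (which conjugates~$\sigma$ to~$\sigma^{-1}$), it preserves necklace cardinalities, so $|\neck{x^{(i)}}|=|\neck{\rho(x^{(i)})}|=|\neck{x^{(k-i)}}|$; taking $i=a$ gives~\textup{(b)}.

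For~\textup{(a)}, assume for contradiction that $\neck{x^{(i)}}$ is deficient for some interior index $a<i<k-a$, say $x^{(i)}=u^{m}$ with $m\ge 2$ and period $p:=n/m<n$. Since the number of unmatched~$1$s of a bitstring equals its number of~$1$s minus its number of matched pairs, and the number of~$1$s is a rotation invariant, a representative lies in~$\RJo_n$ precisely when it \emph{minimizes} the number of matched pairs over its necklace; in particular $x^{(i-1)},x^{(i)},x^{(i+1)}\in\RJo_n$ all realize the common value~$|M|$ as the minimum number of matched pairs in their necklaces. Now $x^{(i+1)}$ arises from the $\sigma^{p}$-invariant word~$x^{(i)}$ by flipping a single $0$-bit (the leftmost unmatched~$0$) to~$1$, whereas the rotation~$\sigma^{p}(x^{(i+1)})$ arises from the \emph{same} word~$x^{(i)}$ by flipping to~$1$ a $0$-bit that is~$p$ positions further along. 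Comparing the parenthesis matchings of these two rotations of~$x^{(i+1)}$ — which differ from the periodic word~$x^{(i)}$ in a single, but differently placed, bit — one shows that one of them has fewer than~$|M|$ matched pairs, contradicting $x^{(i+1)}\in\RJo_n$ (and a symmetric argument applies to~$x^{(i-1)}$). Intuitively, a deficient representative lying in~$\RJo_n$ already realizes the maximum possible number of unmatched~$1$s in its necklace, and advancing one step along~$C$ breaks the periodicity so that a suitable rotation recovers an extra matched pair.

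I expect step~\textup{(a)} to be the crux: it rests on a careful but elementary analysis of how the Greene--Kleitman parenthesis matching of a nearly periodic bitstring changes under a single bit flip and under a rotation by the period, and it is here that Lemma~\ref{lem:finite-jordan} (every element of~$\RJo_n$ other than~$0^n$ and~$1^n$ has finite block code) is convenient for bookkeeping the relevant cases in terms of block codes. Step~\textup{(b)} and the setup are routine once the action of the reverse-complement involution on~$D_n$ is pinned down, and the whole argument should be read in parallel with the much shorter proof of Lemma~\ref{lem:unimod-gks}, of which it is the composite-$n$ analogue.
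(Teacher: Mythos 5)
Your reduction to the two claims (a) and (b), and the idea of contradicting membership in~$\RJo_n$ by comparing a chain-neighbour of a periodic word with its rotation by the period, are in the right spirit (the latter is indeed the engine of the paper's proof), but both of your key intermediate claims fail as stated. For (b), the assertion that the reverse-complement involution~$\rho$ carries each chain of~$D_n$ through the same necklaces level by level is false. Take $n=7$ and the chain $C=(0100100,0110100,0110110,0110111)$ of~$D_7$ (matched pairs $\{(1,2),(4,5)\}$). Then $\rho(x^{(1)})=\ol{0010110}=1101001$, whose cyclic gap sequence between consecutive $1$s is $(1,2,3,1)$, while $x^{(2)}=0110110$ has gap sequence $(1,2,1,3)$; these are not cyclic shifts of one another, so $\rho(x^{(1)})\notin\neck{x^{(2)}}$. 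What is true is that $\rho$ maps $D_n$ onto itself reversing chains, but it sends $C$ to the generally \emph{different} chain $\rho(C)$, so at best you relate the endpoints of one chain of~$\DJo_n$ to those of another chain, not the two endpoints of the same chain as (b) requires. The paper gets (b) by a direct computation you do not have: writing the deficient endpoint as $x=v^r$ with $|v|\le d/2$, it shows that the mirrored endpoint $\tau^{n-2|x|}(x)$ equals $w^r$ with $w=\tau^{d-2|v|}(v)$, hence is again $d$-periodic (and of size exactly~$d$ by exchanging the roles of the two ends).

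For (a), the inequality you lean on (``one of $x^{(i+1)}$, $\sigma^{p}(x^{(i+1)})$ has fewer than $|M|$ matched pairs'') cannot hold in the form described: $x^{(i+1)}$ has exactly $|M|$ matched pairs, and the rotation can have \emph{more}. Concretely, for $n=6$ take the $3$-periodic word $x^{(i)}=100100$ ($|M|=1$, leftmost unmatched $0$ at position~$2$); then $x^{(i+1)}=110100$ has one matched pair, while $\sigma^{3}(x^{(i+1)})=100110$ has two, and in fact $110100\in\RJo_6$. So stepping \emph{up} from a deficient representative need not leave~$\RJo_n$, and no contradiction is available in the direction you emphasize. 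The comparison that does work, after normalizing to $|x^{(i)}|\le n/2$, is the downward one: since $|U_0(v)|\ge|U_1(v)|$, all unmatched $1$s of $x^{(i)}=v^r$ lie in its first copy of~$v$, hence $z:=\tau^{-1}(x^{(i)})=\tau^{-1}(v)v^{r-1}$ while $\sigma^{-d}(z)=v^{r-1}\tau^{-1}(v)$ has strictly more unmatched $1$s, so $z\notin\RJo_n$ and $x^{(i)}$ must be an endpoint (with the mirror-image argument for $|x^{(i)}|\ge n/2$). This is exactly what you defer to ``(a symmetric argument applies to $x^{(i-1)}$)'' without proof, so the crux of (a) is missing, and the step you did spell out is the one that fails.
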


\begin{lemma}
\label{lem:finite-jordan}
For every $n\geq 1$, all necklace representatives in~$\RJo_n$ except $0^n$ and $1^n$ have finite block code.
\end{lemma}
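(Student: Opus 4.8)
The plan is to recast the problem via the \emph{height function} of a bitstring and to show that any rotation of a necklace maximizing the number of unmatched~$1$s (i.e.\ any element of $\RJo_n$), other than $0^n$ and $1^n$, begins with~$1$ and ends with~$0$; by the characterization recalled above, this is exactly the statement that its block code is finite. For $x\in\{0,1\}^n$ I would set $S_0(x):=0$ and $S_i(x):=(\#\,1\text{s})-(\#\,0\text{s})$ in $x_1\dots x_i$. Reading $x$ from left to right, a~$1$ is unmatched precisely when no unmatched~$0$ is available at that step, and a one-line check gives $|U_1(x)|=\max_{0\le i\le n}S_i(x)$, the maximum being attained at the index of the last unmatched~$1$. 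From this I would read off the effect of adjoining a single bit at an end: $|U_1(0w)|=\max(0,|U_1(w)|-1)$, $|U_1(w0)|=|U_1(w)|$, $|U_1(1w)|=|U_1(w)|+1$, and $|U_1(w1)|=\max(|U_1(w)|,\,S_{|w|}(w)+1)$, the last quantity being at most $|U_1(w)|+1$, with equality if and only if $S_{|w|}(w)=|U_1(w)|$, i.e.\ the running maximum of~$w$ is attained at its final position. These identities are routine and are all the computation the argument needs.

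For the claim $x_1=1$ (when $x\ne 0^n$): suppose $x_1=0$ and write $x=0w$, so $\sigma^{-1}(x)=w0$. Maximality of $|U_1(x)|$ over the necklace $\neck{x}$ together with the identities gives $|U_1(w)|=|U_1(w0)|=|U_1(\sigma^{-1}(x))|\le|U_1(x)|=|U_1(0w)|=\max(0,|U_1(w)|-1)$, which is possible only if $|U_1(w)|=0$, hence $|U_1(x)|=0$. But $x\ne 0^n$ has a $1$-entry, and the rotation of~$x$ beginning at that entry has first symbol~$1$ and hence $|U_1|\ge 1$; this contradicts $|U_1(x)|=0$ being maximal over $\neck{x}$. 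Therefore $x_1=1$.

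For the claim $x_n=0$ (when $x\ne 1^n$): let $M$ be the maximum of $|U_1|$ over $\neck{x}$, and suppose some rotation $r\in\neck{x}$ attaining~$M$ ends in~$1$, say $r=w1$, so $\sigma(r)=1w$. Then $|U_1(\sigma(r))|=|U_1(w)|+1$ while $|U_1(r)|=|U_1(w1)|\le|U_1(w)|+1$, so maximality of~$M$ forces both quantities to equal $|U_1(w)|+1$; in particular $\sigma(r)$ also attains~$M$, and the equality $|U_1(w1)|=|U_1(w)|+1$ forces $S_{|w|}(w)=|U_1(w)|$, which makes the last entry of~$w$ equal~$1$ (the only alternative, $w$ empty, gives $r=1^n$). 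Hence $\sigma(r)$ again attains~$M$ and ends in~$1$. Iterating, $\sigma^k(r)$ attains~$M$ and ends in~$1$ for every $k\ge 0$; since the final symbols of $\sigma^0(r),\sigma^1(r),\dots,\sigma^{n-1}(r)$ are precisely the $n$ symbols of~$r$, it follows that $r=1^n$, whence $x=1^n$ --- excluded. Therefore no maximizing rotation ends in~$1$; in particular $x_n=0$, and combined with the previous step $x$ starts with~$1$ and ends with~$0$, so $\beta(x)$ is finite.

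I expect the height-function dictionary and the argument for $x_1=1$ to be short; the genuine obstacle is the claim $x_n=0$, where the obvious one-step rotation $w1\mapsto 1w$ yields only the \emph{weak} inequality $|U_1(w1)|\le|U_1(1w)|$ and so cannot by itself contradict maximality. The key observation is that the equality case reproduces itself under~$\sigma$, and the only configuration of rotations closed under this self-reproduction is the all-ones necklace, which is what closes the argument.
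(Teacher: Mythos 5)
Your proof is correct, and it arrives at the same two target facts as the paper---every $x\in\RJo_n\setminus\{0^n,1^n\}$ has $x_1=1$ and $x_n=0$---but by a noticeably different mechanism. The paper's proof uses two \emph{multi-bit} rotations with a direct strict-increase claim: if $x_1=0$, rotating the entire leading block of 0s to the back strictly increases the number of unmatched 1s (the first 1 becomes unmatched and no new matches are created), and symmetrically, if $x_n=1$, moving the whole trailing block of 1s to the front strictly increases it (the first trailing 1 was matched to the final 0 and becomes unmatched); maximality is contradicted in one shot in both cases. You instead work with \emph{single-bit} rotations via the prefix-sum characterization $|U_1(x)|=\max_{0\le i\le n}S_i(x)$ and the four adjunction identities, all of which are correct. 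For $x_1=1$ this still gives an immediate contradiction (maximality forces $|U_1(w)|=0$, impossible once some rotation starts with a 1). For $x_n=0$, as you note, one step only yields the weak inequality $|U_1(w1)|\le|U_1(1w)|$, and your additional idea---the equality case forces $S_{|w|}(w)=|U_1(w)|$, hence $w$ ends in 1, so the property ``attains the maximum and ends in 1'' propagates under $\sigma$ and forces $r=1^n$---is exactly what closes that half; I checked the propagation step and it is sound. What each route buys: the paper's block-rotation argument is shorter because strictness is available directly, while your version reduces every inequality to explicit, checkable identities, with the iteration argument substituting for the strictness that a one-step rotation cannot supply.
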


With these lemmas in hand, the proof of Lemmas~\ref{lem:unroll-GKS} and~\ref{lem:unroll-J} is straightforward.

\begin{proof}[Proof of Lemma~\ref{lem:unroll-GKS}]
We first consider the SCD of~$N_n$ for prime $n\geq 2$ obtained via the Griggs-Killian-Savage construction described before, specified by the chains of necklace representatives~$\DGKS_n$.
We let~$U_n$ be the SCD of~$Q_n$ obtained by unrolling each chain from this SCD.
Furthermore, we let~$\ol{U_n}$ be the SCD of~$Q_n$ obtained by unrolling each chain from the complement of this SCD, or equivalently, by taking the complement of~$U_n$.
In both cases, unrolling is possible because of Lemma~\ref{lem:unimod-gks} (recall Observations~\ref{obs:unroll-full} and \ref{obs:unroll-def}), where we also use that complementation preserves unimodality.

It remains to show that $U_n$ and $\ol{U_n}$ are almost-orthogonal SCDs of~$Q_n$.
For this consider two distinct bitstrings~$x'$ and $y'$ on the same chain in~$U_n$ that are neither~$0^n$ nor~$1^n$.
There is a unique~$k\geq 0$, such that $x'=\sigma^k(x)$ and $y'=\sigma^k(y)$ for two bitstrings~$x$ and~$y$ on the same chain in~$\DGKS_n$.
Consider the following chain of implications:
\begin{itemize}[leftmargin=3ex,itemsep=0ex,parsep=0ex,partopsep=0ex,topsep=1mm]
\item $x'$ and $y'$ lie on the same chain in~$\ol{U_n}$.
\item $\ol{x'}$ and $\ol{y'}$ lie on the same chain in~$U_n$.
\item There is a unique~$\ell\geq 0$, so that $\sigma^\ell(\ol{x'})$ and $\sigma^\ell(\ol{y'})$ lie on the same chain in~$\DGKS_n$.
\item $\sigma^{k+\ell}(\ol{x})$ and $\sigma^{k+\ell}(\ol{y})$ lie on the same chain in~$\DGKS_n$.
\end{itemize}
From Lemma~\ref{lem:finite-gks} we know that~$x$ and~$y$ have finite block code.
Clearly, if two elements lie on the same chain in~$\DGKS_n$, then they also lie on the same chain in~$D_n$.
Consequently, applying Proposition~\ref{prop:compl} falsifies the last of the above statements, so the first one is also false, i.e., we obtain that $x'$ and $y'$ do not lie on the same chain in~$\ol{U_n}$.
To complete the proof that~$U_n$ and~$\ol{U_n}$ are almost-orthogonal, we can verify directly that the unique longest chains in~$U_n$ and~$\ol{U_n}$, namely $(0^n,1^10^{n-1},1^20^{n-2},\ldots,1^{n-2}0^2,1^{n-1}0^1,1^n)$ and its complement, intersect only in~$0^n$ and~$1^n$.
\end{proof}

\begin{proof}[Proof of Lemma~\ref{lem:unroll-J}]
This proof proceeds in an analogous fashion as the proof of Lemma~\ref{lem:unroll-GKS} presented before, using Lemmas~\ref{lem:unimod-jordan} and \ref{lem:finite-jordan} instead of Lemmas~\ref{lem:unimod-gks} and \ref{lem:finite-gks}.
\end{proof}

It remains to prove Proposition~\ref{prop:compl} and Lemmas~\ref{lem:unimod-gks}--\ref{lem:finite-jordan}, which will be done in the next three subsections.

\paragraph{Proof of Proposition~\ref{prop:compl}}

For the proof we will need the following auxiliary lemma.

\begin{lemma}
\label{lem:compl-aux}
Let $x,y\in\{0,1\}^n$, and let $i$ and $j$ be two distinct indices such that $x_i x_{i+1}=01$ and $y_i y_{i+1}\neq 01$, and $x_j x_{j+1}\neq 01$ and $y_j y_{j+1}=01$.
Then the sets $M(\sigma^k(x))$ and $M(\sigma^k(y))$ are distinct for all~$k\geq 0$.
\end{lemma}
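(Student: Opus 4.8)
The plan is to reduce the statement to one elementary fact about parenthesis matchings, namely that two consecutive bits $01$ always form a matched pair. Concretely, I would first record the \emph{auxiliary claim}: for any $z\in\{0,1\}^n$ and any $a\in\{1,\dots,n-1\}$ we have $(a,a+1)\in M(z)$ if and only if $z_az_{a+1}=01$. The ``only if'' direction is immediate, since in a matched pair $(p,q)$ the index $p$ carries an opening parenthesis ($0$) and $q$ a closing parenthesis ($1$); in particular $q=p+1$ forces $z_pz_{p+1}=01$. For the ``if'' direction I would invoke the ``closest pairs'' description of the matching: an adjacent $01$ encloses no other position, so in the non-crossing matching the closing parenthesis at $a+1$ cannot be matched to anything other than the opening parenthesis at $a$.

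Next I would examine how a cyclic rotation acts on consecutive positions. Writing $\sigma(z)=z_nz_1\cdots z_{n-1}$, the linear string $\sigma^k(z)$ lists the original positions $1,\dots,n$ in cyclic order starting from some offset; hence at most one of the $n-1$ adjacent pairs $(1,2),(2,3),\dots,(n-1,n)$ of $z$ is split across the two ends of $\sigma^k(z)$, and any non-split pair $(p,p+1)$ reappears as a linear adjacency $(b,b+1)$ of $\sigma^k(z)$ with $\sigma^k(z)_b\sigma^k(z)_{b+1}=z_pz_{p+1}$. Crucially, which pair (if any) is split depends only on $k$, not on the bits of $z$, so for every fixed $k\ge 0$ the same pair is split in $\sigma^k(x)$ and in $\sigma^k(y)$. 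Since $i\neq j$, the two distinct adjacent pairs $(i,i+1)$ and $(j,j+1)$ cannot both be the split one, so at least one of them survives as a linear adjacency in both $\sigma^k(x)$ and $\sigma^k(y)$ simultaneously.

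To finish, fix $k\ge 0$ and suppose $(i,i+1)$ survives, occurring at linear positions $(b,b+1)$ of the rotated strings. Then $\sigma^k(x)_b\sigma^k(x)_{b+1}=x_ix_{i+1}=01$, so $(b,b+1)\in M(\sigma^k(x))$ by the auxiliary claim, while $\sigma^k(y)_b\sigma^k(y)_{b+1}=y_iy_{i+1}\neq 01$, so $(b,b+1)\notin M(\sigma^k(y))$; hence $M(\sigma^k(x))\neq M(\sigma^k(y))$. If instead $(j,j+1)$ survives, the symmetric argument with the roles of $x$ and $y$ interchanged (using $y_jy_{j+1}=01$ and $x_jx_{j+1}\neq 01$) again exhibits a consecutive pair lying in exactly one of the two matching sets. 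As $k\ge 0$ was arbitrary, this proves the lemma; note one may even restrict to $k\in\{0,\dots,n-1\}$ since $\sigma^n(z)=z$.

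I expect the main obstacle to be purely notational: setting up an index convention for the rotation that makes ``at most one adjacent pair is split by $\sigma^k$, and all others reappear verbatim'' precise and painless. The parenthesis-matching auxiliary claim is standard, and the concluding step is a one-liner once the first two are in place.
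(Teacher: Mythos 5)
Your proposal is correct and follows essentially the same route as the paper: the key fact that $(a,a+1)\in M(z)$ if and only if $z_az_{a+1}=01$, combined with the observation that a rotation by $k$ can destroy at most one of the two adjacencies $(i,i+1)$, $(j,j+1)$ (the paper phrases this via the conditions $k\neq -i$ and $k\neq -j$ with indices taken modulo $n$), so the surviving pair distinguishes $M(\sigma^k(x))$ from $M(\sigma^k(y))$. The only difference is presentational — your "split pair" bookkeeping versus the paper's modular indexing — and both are fine.
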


\begin{proof}
Clearly, for any bitstring $z$ we have that $(\ell,\ell+1)\in M(z)$ if and only if~$z_\ell z_{\ell+1}=01$.
In the following we consider all indices in~$x$ and~$y$ modulo~$n$, with $1,\ldots,n$ as representatives.
As a consequence of our first observation, if $k\neq -i$, then $(k+i,k+i+1)$ is in~$M(\sigma^k(x))$ but not in~$M(\sigma^k(y))$.
Similarly, if $k\neq -j$, then $(k+j,k+j+1)$ is in~$M(\sigma^k(y))$ but not in~$M(\sigma^k(x))$.
As $i\neq j$, the two sets are distinct in any case.
\end{proof}

\begin{proof}[Proof of Proposition~\ref{prop:compl}]
We assume without loss of generality that~$|x|<|y|$, i.e., $y$ is obtained from $x$ by repeatedly applying $\tau$.
Furthermore, let~$i$ and~$j$ be the indices of the leftmost unmatched~0 in~$x$ and the rightmost unmatched~1 in~$y$, respectively.
More formally, we have $i=\min U_0(x)$ and $j=\max U_1(y)$.
As~$x$ and~$y$ have finite block code, we have $x_1=y_1=1$ and $x_n=y_n=0$.
In particular, these positions are unmatched, so $i>1$ and $j<n$ are well-defined.
Moreover, we clearly have~$i\leq j$.
Note that $x_{i-1}$ is either matched or an unmatched~1.
However, as every block of matched parentheses ends with~1, we have $x_{i-1}=y_{i-1}=1$ in any case.
A similar argument shows that $y_{j+1}=x_{j+1}=0$.
Summarizing, the situation looks as follows:
\begin{center}
\begin{tikzpicture}[xscale = 1,yscale=.75]

\def\first{.5}
\def\firstdot{1}
\def\pone{1.5}
\def\p{1.8}
\def\middot{2.3}
\def\q{2.8}
\def\qone{3.1}
\def\lastdot{3.6}
\def\last{4.1}

\def\xoffset{.15}%
\def\yoffsetm{.35}%

\node[] (i) at (\first,.75) {$1$};
\node[] (i) at (\p,.75) {$i$};
\node[] (i) at (\q,.75) {$j$};
\node[] (i) at (\last,.75) {$n$};

\begin{scope}[yshift = 0cm]
\node[] at (0,0) {$\strut y = $};
\node[] at (\first,0) {$\strut1$};
\node[] at (\firstdot,0) {$*\!*\!*$};
\node[] at (\pone,0) {$\strut1$};
\node[] at (\p,0) {$\strut1$};
\node[] at (\middot,0) {$*\!*\!*$};
\node[] at (\q,0) {$\strut1$};
\node[] at (\qone,0) {$\strut0$};
\node[] at (\lastdot,0) {$*\!*\!*$};
\node[] at (\last,0) {$\strut0$};
\end{scope}

\begin{scope}[yshift = -0.7cm]
\node[] at (0,0) {$\strut x = $};
\node[] at (\first,0) {$\strut1$};
\node[] at (\firstdot,0) {$*\!*\!*$};
\node[] at (\pone,0) {$\strut1$};
\node[] at (\p,0) {$\strut0$};
\node[] at (\middot,0) {$*\!*\!*$};
\node[] at (\q,0) {$\strut0$};
\node[] at (\qone,0) {$\strut0$};
\node[] at (\lastdot,0) {$*\!*\!*$};
\node[] at (\last,0) {$\strut0$};
\end{scope}

\begin{scope}[xshift = 7cm, yshift= 0cm]
\node[] (i) at (\first,.75) {$1$};
\node[] (i) at (\p,.75) {$i$};
\node[] (i) at (\q,.75) {$j$};
\node[] (i) at (\last,.75) {$n$};
\end{scope}

\begin{scope}[xshift = 7cm]
\begin{scope}[yshift = 0cm]
\path[draw = black!20!white, fill=black!20!white, thick, rounded corners =1, line cap = round] (\pone - \xoffset,-1) to[] (\pone - \xoffset,\yoffsetm) to[] (\p + \xoffset,\yoffsetm) to[] (\p + \xoffset,-1) -- cycle;
\path[draw = black!20!white, fill=black!20!white, thick, rounded corners =1, line cap = round] (\q - \xoffset,-1) to[] (\q - \xoffset,\yoffsetm) to[] (\qone + \xoffset,\yoffsetm) to[] (\qone + \xoffset,-1) -- cycle;
\node[] at (0,0) {$\strut \ol{y} = $};
\node[] at (\first,0) {$\strut0$};
\node[] at (\firstdot,0) {$*\!*\!*$};
\node[] at (\pone,0) {$\strut0$};
\node[] at (\p,0) {$\strut0$};
\node[] at (\middot,0) {$*\!*\!*$};
\node[] at (\q,0) {$\strut0$};
\node[] at (\qone,0) {$\strut1$};
\node[] at (\lastdot,0) {$*\!*\!*$};
\node[] at (\last,0) {$\strut1$};
\end{scope}

\begin{scope}[yshift= -0.7cm]
\node[] at (0,0) {$\strut \ol{x} = $};
\node[] at (\first,0) {$\strut0$};
\node[] at (\firstdot,0) {$*\!*\!*$};
\node[] at (\pone,0) {$\strut0$};
\node[] at (\p,0) {$\strut1$};
\node[] at (\middot,0) {$*\!*\!*$};
\node[] at (\q,0) {$\strut1$};
\node[] at (\qone,0) {$\strut1$};
\node[] at (\lastdot,0) {$*\!*\!*$};
\node[] at (\last,0) {$\strut1$};
\end{scope}
\end{scope}

\end{tikzpicture}
\end{center}
From these observations it follows that $\ol{x_{i-1}x_i}=01$ and $\ol{y_{i-1}y_i}=00$, and similarly $\ol{x_j x_{j+1}}=11$ and $\ol{y_j y_{j+1}}=01$.
Applying Lemma~\ref{lem:compl-aux} to the indices~$i-1$ and~$j$ in~$\ol{x}$ and~$\ol{y}$ hence shows that $M(\sigma^k(\ol{x}))$ and $M(\sigma^k(\ol{y}))$ are distinct for all~$k\geq 0$.
As each chain of~$D_n$ is uniquely described by its matched pairs of parentheses, we obtain that $\sigma^k(\ol{x})$ and $\sigma^k(\ol{y})$ do not lie on the same chain, proving the proposition.
\end{proof}

\paragraph{Proofs of Lemmas~\ref{lem:unimod-gks} and \ref{lem:finite-gks}}

\begin{proof}[Proof of Lemma~\ref{lem:unimod-gks}]
This is trivial, as there are only two deficient necklaces for prime~$n$, namely $\neck{0^n}$ and $\neck{1^n}$.
\end{proof}

\begin{proof}[Proof of Lemma~\ref{lem:finite-gks}]
Each bitstring $x$ other than $0^n$ and $1^n$ has two consecutive bits $x_ix_{i+1}=01$.
Consequently, the rotated bitstring $\sigma^{-i}(x)$ starts with~1 and ends with~0 and therefore has finite block code. 
\end{proof}

\paragraph{Proofs of Lemmas~\ref{lem:unimod-jordan} and \ref{lem:finite-jordan}}

\begin{proof}[Proof of Lemma~\ref{lem:unimod-jordan}]
This proof was suggested in Wille's thesis~\cite{Wille2018} on edge-disjoint SCDs in the $n$-cube, and is reproduced here with her permission.

Consider a chain from~$\DJo_n$ with a bitstring~$x$ such that $\neck{x}$ is deficient and of size~$d<n$.
We will show that~$x$ is an endpoint of this chain and that the other endpoint~$y$ corresponds to a deficient necklace~$\neck{y}$ of the same size~$d$.

The following argument is illustrated in Figure~\ref{fig:unimod-jordan}.
Define $r := n/d$ and let $v\in\{0,1\}^d$ be such that $x = v^r$.
We assume without loss of generality that~$|x|\leq n/2$, implying that $|v| \leq d/2$.
As every matched pair of parentheses involves exactly one~0 and one~1, it follows that $|U_0(v)|\geq |U_1(v)|$.
This ensures that we can match every unmatched~1 in the $i$th copy of~$v$ in~$x$ with an unmatched~0 in the $(i-1)$th copy of~$v$ for all $i=2,\ldots,r$, implying that $U_1(x)=U_1(v)$.

\begin{figure}
\centering
\begin{tikzpicture}[xscale = 1,yscale=1,
bracketintra/.style={draw, thick, solid},
bracketinter/.style={draw, thick, dash pattern={{on 1pt off 1pt}}}]

\def\first{.5}
\def\offset{.3}

\def\ycenter{4pt}
\def\yoffset{6pt}

\newcommand{\DrawBrac}[4][]{%
    \draw [#1, rounded corners=2pt]  ([yshift=-\ycenter]#3.center)%
           -- ([yshift=-\yoffset*#2]#3.center)%
           -- ([yshift=-\yoffset*#2]#4.center)%
           -- ([yshift=-\ycenter]#4.center)%
}

\node[] at (0,0) {$\strut y = $};
\foreach \v in {0,1,2} {
\foreach \b [count=\i from 0] in {1,1,1,1,0,0,0,1,0,1,1,0} {
  \node[] (y\v\i) at (\first + \v * 12 * \offset + \i * \offset, 0) {$\strut\b$};
}
}

\node[] at (0,-1) {$\strut x = $};
\foreach \v in {0,1,2} {
\foreach \b [count=\i from 0] in {1,1,0,0,0,0,0,1,0,1,1,0} {
  \node[] (x\v\i) at (\first + \v * 12 * \offset + \i * \offset, -1) {$\strut\b$};
}
}

\node[] at (0,-2) {$\strut z = $};
\foreach \v in {0} {
\foreach \b [count=\i from 0] in {1,0,0,0,0,0,0,1,0,1,1,0} {
  \node[] (z\v\i) at (\first + \v * 12 * \offset + \i * \offset, -2) {$\strut\b$};
}
}
\foreach \v in {1,2} {
\foreach \b [count=\i from 0] in {1,1,0,0,0,0,0,1,0,1,1,0} {
  \node[] (z\v\i) at (\first + \v * 12 * \offset + \i * \offset, -2) {$\strut\b$};
}
}

\draw[thick] (\first + 12 * \offset -\offset/2, -2.5) to[] (\first + 12 * \offset -\offset/2, .5);
\draw[thick] (\first + 2 * 12 * \offset -\offset/2, -2.5) to[] (\first + 2 * 12 * \offset -\offset/2, .5);

\DrawBrac[bracketintra]{1}{y06}{y07};
\DrawBrac[bracketintra]{1}{y08}{y09};
\DrawBrac[bracketintra]{2}{y05}{y010};
\DrawBrac[bracketinter]{1}{y011}{y10};
\DrawBrac[bracketinter]{3}{y04}{y11};

\DrawBrac[bracketintra]{1}{y16}{y17};
\DrawBrac[bracketintra]{1}{y18}{y19};
\DrawBrac[bracketintra]{2}{y15}{y110};
\DrawBrac[bracketinter]{1}{y111}{y20};
\DrawBrac[bracketinter]{3}{y14}{y21};

\DrawBrac[bracketintra]{1}{y26}{y27};
\DrawBrac[bracketintra]{1}{y28}{y29};
\DrawBrac[bracketintra]{2}{y25}{y210};

\DrawBrac[bracketintra]{1}{x06}{x07};
\DrawBrac[bracketintra]{1}{x08}{x09};
\DrawBrac[bracketintra]{2}{x05}{x010};
\DrawBrac[bracketinter]{1}{x011}{x10};
\DrawBrac[bracketinter]{3}{x04}{x11};

\DrawBrac[bracketintra]{1}{x16}{x17};
\DrawBrac[bracketintra]{1}{x18}{x19};
\DrawBrac[bracketintra]{2}{x15}{x110};
\DrawBrac[bracketinter]{1}{x111}{x20};
\DrawBrac[bracketinter]{3}{x14}{x21};

\DrawBrac[bracketintra]{1}{x26}{x27};
\DrawBrac[bracketintra]{1}{x28}{x29};
\DrawBrac[bracketintra]{2}{x25}{x210};

\DrawBrac[bracketintra]{1}{z06}{z07};
\DrawBrac[bracketintra]{1}{z08}{z09};
\DrawBrac[bracketintra]{2}{z05}{z010};
\DrawBrac[bracketinter]{1}{z011}{z10};
\DrawBrac[bracketinter]{3}{z04}{z11};

\DrawBrac[bracketintra]{1}{z16}{z17};
\DrawBrac[bracketintra]{1}{z18}{z19};
\DrawBrac[bracketintra]{2}{z15}{z110};
\DrawBrac[bracketinter]{1}{z111}{z20};
\DrawBrac[bracketinter]{3}{z14}{z21};

\DrawBrac[bracketintra]{1}{z26}{z27};
\DrawBrac[bracketintra]{1}{z28}{z29};
\DrawBrac[bracketintra]{2}{z25}{z210};

\draw [decorate,decoration={brace,raise=30pt}](\first -\offset/2,0) -- node[above=32pt] {$n = 36$} (\first + 3*12*\offset -\offset/2,0);
\draw [decorate,decoration={brace,raise=10pt}](\first -\offset/2,0) -- node[above=12pt] {$d=12$} (\first + 12*\offset -\offset/2,0);
\end{tikzpicture}
\caption{Illustration of the proof of Lemma~\ref{lem:unimod-jordan} for $x = v^r$ with $v=110000010110$ and $r=3$.
The brackets show matched pairs of parentheses, where solid brackets are matches within each copy of~$v$ in~$x$, and dotted brackets are matches across different copies of~$v$.}
\label{fig:unimod-jordan}
\end{figure}

We proceed to show that~$x$ is the starting point of its chain in~$\DJo_n$.
If $U_1(x) = \emptyset$, then~$x$ is the starting point of its chain in~$D_n$ by definition, and consequently also the starting point of its chain in~$\DJo_n$.
Otherwise $U_1(x)\neq \emptyset$, and we show that then $z:=\tau^{-1}(x) \notin \RJo_n$.
By our observation from before, all unmatched~1s of~$x$ lie in the first copy of~$v$, so we have $z=\tau^{-1}(x) = \tau^{-1}(v)v^{r-1}$.
Together with the fact that $|U_1(\tau^{-1}(v))|=|U_1(v)|-1$, we obtain $U_1(z)=U_1(x)-1$ and $U_1(\sigma^{-d}(z))=U_1(x)$.
This implies $U_1(z)<U_1(\sigma^{-d}(z))$, i.e., $z$ does not have the maximum number of unmatched~1s among the representatives of its necklace, so indeed $z \notin \RJo_n$.

We now show that $y:=\tau^{n-2|x|}(x)$ has the form $y = w^r$ for some $w\in\{0,1\}^d$.
This implies $|\neck{y}|\leq d$, which is sufficient to prove that actually $|\neck{y}|=d$, as otherwise we could reverse the roles of~$x$ and~$y$ in the proof, yielding a contradiction.
For $i=1,\ldots,r-1$, the number of~0s in the $i$th copy of~$v$ in~$x$ that are unmatched in~$x$ is $|U_0(v)|-|U_1(v)| = d - 2|v| = (n-2|x|)/r$.
Consequently, by applying $\tau^{n-2|x|}$ to $x$, we arrive at $\tau^{n-2|x|}(x) = w^r$ with $w = \tau^{d-2|v|}(v)$.
\end{proof}

\begin{proof}[Proof of Lemma~\ref{lem:finite-jordan}]
Let $x\in \RJo_n\setminus\{0^n,1^n\}$, i.e., $x$ has the maximum number of unmatched~1s among the representatives of its necklace.
Note that~$x_1=1$, as otherwise we could rotate~$x$ to the left until the first 1-bit reaches the first position, which would strictly increase the number of unmatched~1s.
A similar argument shows that~$x_n=0$, as otherwise we could rotate~$x$ to the right until the last 0-bit reaches the last position, which would strictly increase the number of unmatched~1s.
These two observations imply that $x$ has finite block code.
\end{proof}

\section{SAT based computer search}
\label{sec:sat}

In this section we describe our computer search for SCDs in cubes of small dimension using a SAT solver.

\subsection{The reduced necklace graph}

We let~$N_n^-$ denote the multigraph obtained as follows:
We consider the cover graph of~$N_n$, where the edge multiplicities are given by the capacities (as defined before Lemma~\ref{lem:cyclic-bound}), and we remove all edges between a full necklace and a deficient necklace, whenever the deficient necklace is closer to the middle level(s); see Figure~\ref{fig:N6m}.
Note here that even though $N_n^-$ is a (multi)graph, it inherits the level structure from the poset~$N_n$, so all the poset notions (chain, SCD, etc.) from before translate to~$N_n^-$ in the natural way.
The aforementioned edge removals enforce that a chain containing a deficient necklace must either start or end at this necklace.
Informally speaking, removing those edges does not harm us when searching for unimodal chains and SCDs, as they must not be contained in any unimodal chain anyway.

\subsection{SAT formula for edge-disjoint SCDs of~$N_n^-$}

In this section we describe a propositional formula~$\Phi(n,s)$ in conjunctive normal form (CNF), whose solutions correspond to $s$ edge-disjoint unimodal SCDs of~$N_n^-$.
In the later sections we show how to modify those solutions, so that they can be unrolled to $s$ edge-disjoint (and good almost-orthogonal) SCDs of~$Q_n$.
Throughout this section, the integers~$n\geq 1$ and~$s\geq 2$ are fixed.

We first compute the level sizes of~$N_n^-$, and, based on this, the number~$c_n$ of chains and the chain sizes that an SCD must have.
Different SCDs will be indexed by $i=1,\ldots,s$, and different chains in the $i$th SCD will be indexed by $j=1,\ldots,c_n$.
We also assume that the chains of the $i$th SCD are indexed in decreasing order of their size, so chain~$j=1$ is the unique longest chain, and chain~$j=c_n$ is a shortest chain.

We use Boolean variables $X_{i,j,e}$ to indicate that edge~$e$ of~$N_n^-$ is contained in chain~$j$ of decomposition~$i$.
Moreover, Boolean variables $Y_{i,j,u}$ are used to indicate that node~$u$ of~$N_n^-$ is contained in chain~$j$ of decomposition~$i$.
Clearly, we introduce these variables only for pairs~$(j,e)$ and~$(j,u)$ in the relevant levels.
For instance, the node $u=\neck{\emptyset}$ can only be contained in the longest chain~1, so we only have a single variable~$Y_{i,j,u}$ for fixed~$i$ and~$u$, namely~$Y_{i,1,u}$.

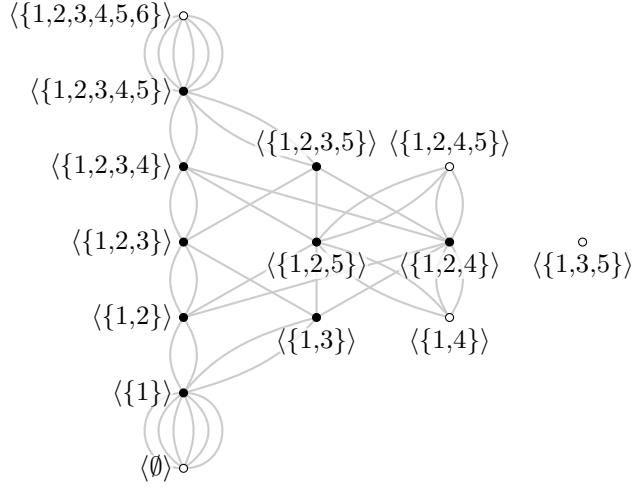
\begin{figure}
\centering
\begin{tikzpicture}[xscale = 1.75,
edgegraph/.style={draw=black!20!white, thick}]

%Define the coordinates of the nodes
\coordinate (000000) at (0.000000,0);
\coordinate (000001) at (0.000000,1);
\coordinate (000011) at (0.000000,2);
\coordinate (000101) at (1.000000,2);
\coordinate (000111) at (0.000000,3);
\coordinate (001001) at (2.000000,2);
\coordinate (010011) at (1.000000,3);
\coordinate (001011) at (2.000000,3);
\coordinate (001111) at (0.000000,4);
\coordinate (010101) at (3.000000,3);
\coordinate (010111) at (1.000000,4);
\coordinate (011011) at (2.000000,4);
\coordinate (011111) at (0.000000,5);
\coordinate (111111) at (0.000000,6);

%Write the edges
\path[edgegraph] (111111) to[out = -90.000000, in = 180 - -90.000000, relative] (011111);
\path[edgegraph] (111111) to[out = -40.000000, in = 180 - -40.000000, relative] (011111);
\path[edgegraph] (111111) to[out = -15.000000, in = 180 - -15.000000, relative] (011111);
\path[edgegraph] (111111) to[out = 15.000000, in = 180 - 15.000000, relative] (011111);
\path[edgegraph] (111111) to[out = 40.000000, in = 180 - 40.000000, relative] (011111);
\path[edgegraph] (111111) to[out = 90.000000, in = 180 - 90.000000, relative] (011111);
\path[edgegraph] (011111) to[out = -20.000000, in = 180 - -20.000000, relative] (001111);
\path[edgegraph] (011111) to[out = 20.000000, in = 180 - 20.000000, relative] (001111);
\path[edgegraph] (011111) to[out = -20.000000, in = 180 - -20.000000, relative] (010111);
\path[edgegraph] (011111) to[out = 20.000000, in = 180 - 20.000000, relative] (010111);
\path[edgegraph] (011011) to[out = -20.000000, in = 180 - -20.000000, relative] (010011);
\path[edgegraph] (011011) to[out = 20.000000, in = 180 - 20.000000, relative] (010011);
\path[edgegraph] (011011) to[out = -20.000000, in = 180 - -20.000000, relative] (001011);
\path[edgegraph] (011011) to[out = 20.000000, in = 180 - 20.000000, relative] (001011);
\path[edgegraph] (010111) to[out = 0.000000, in = 180 - 0.000000, relative] (001011);
\path[edgegraph] (010111) to[out = 0.000000, in = 180 - 0.000000, relative] (000111);
\path[edgegraph] (010111) to[out = 0.000000, in = 180 - 0.000000, relative] (010011);
\path[edgegraph] (001111) to[out = 0.000000, in = 180 - 0.000000, relative] (010011);
\path[edgegraph] (001111) to[out = -20.000000, in = 180 - -20.000000, relative] (000111);
\path[edgegraph] (001111) to[out = 20.000000, in = 180 - 20.000000, relative] (000111);
\path[edgegraph] (001111) to[out = 0.000000, in = 180 - 0.000000, relative] (001011);
\path[edgegraph] (000001) to[out = -20.000000, in = 180 - -20.000000, relative] (000011);
\path[edgegraph] (000001) to[out = 20.000000, in = 180 - 20.000000, relative] (000011);
\path[edgegraph] (000001) to[out = -20.000000, in = 180 - -20.000000, relative] (000101);
\path[edgegraph] (000001) to[out = 20.000000, in = 180 - 20.000000, relative] (000101);
\path[edgegraph] (000000) to[out = -90.000000, in = 180 - -90.000000, relative] (000001);
\path[edgegraph] (000000) to[out = -40.000000, in = 180 - -40.000000, relative] (000001);
\path[edgegraph] (000000) to[out = -15.000000, in = 180 - -15.000000, relative] (000001);
\path[edgegraph] (000000) to[out = 15.000000, in = 180 - 15.000000, relative] (000001);
\path[edgegraph] (000000) to[out = 40.000000, in = 180 - 40.000000, relative] (000001);
\path[edgegraph] (000000) to[out = 90.000000, in = 180 - 90.000000, relative] (000001);
\path[edgegraph] (000011) to[out = 0.000000, in = 180 - 0.000000, relative] (001011);
\path[edgegraph] (000011) to[out = -20.000000, in = 180 - -20.000000, relative] (000111);
\path[edgegraph] (000011) to[out = 20.000000, in = 180 - 20.000000, relative] (000111);
\path[edgegraph] (000011) to[out = 0.000000, in = 180 - 0.000000, relative] (010011);
\path[edgegraph] (000101) to[out = 0.000000, in = 180 - 0.000000, relative] (010011);
\path[edgegraph] (000101) to[out = 0.000000, in = 180 - 0.000000, relative] (001011);
\path[edgegraph] (000101) to[out = 0.000000, in = 180 - 0.000000, relative] (000111);
\path[edgegraph] (001001) to[out = -20.000000, in = 180 - -20.000000, relative] (001011);
\path[edgegraph] (001001) to[out = 20.000000, in = 180 - 20.000000, relative] (001011);
\path[edgegraph] (001001) to[out = -20.000000, in = 180 - -20.000000, relative] (010011);
\path[edgegraph] (001001) to[out = 20.000000, in = 180 - 20.000000, relative] (010011);

%Write the nodes
\node[node_cube, deficient, label={[black, node_cube_label]left:\small$\neck{\{1,\!2,\!3,\!4,\!5,\!6\}}$}] at (111111) {};
\node[node_cube, full, label={[black, node_cube_label]left:\small$\neck{\{1,\!2,\!3,\!4,\!5\}}$}] at (011111) {};
\node[node_cube, deficient, label={[black, node_cube_label]above:\small$\neck{\{1,\!2,\!4,\!5\}}$}] at (011011) {};
\node[node_cube, full, label={[black, node_cube_label]above:\small$\neck{\{1,\!2,\!3,\!5\}}$}] at (010111) {};
\node[node_cube, deficient, label={[black, node_cube_label]below:\small$\neck{\{1,\!3,\!5\}}$}] at (010101) {};
\node[node_cube, full, label={[black, node_cube_label]left:\small$\neck{\{1,\!2,\!3,\!4\}}$}] at (001111) {};
\node[node_cube, full, label={[black, node_cube_label]below:\small$\neck{\{1,\!2,\!4\}}$}] at (001011) {};
\node[node_cube, full, label={[black, node_cube_label]below:\small$\neck{\{1,\!2,\!5\}}$}] at (010011) {};
\node[node_cube, full, label={[black, node_cube_label]left:\small$\neck{\{1\}}$}] at (000001) {};
\node[node_cube, deficient, label={[black, node_cube_label]left:\small$\neck{\emptyset}$}] at (000000) {};
\node[node_cube, full, label={[black, node_cube_label]left:\small$\neck{\{1,\!2,\!3\}}$}] at (000111) {};
\node[node_cube, full, label={[black, node_cube_label]left:\small$\neck{\{1,\!2\}}$}] at (000011) {};
\node[node_cube, full, label={[black, node_cube_label]below:\small$\neck{\{1,\!3\}}$}] at (000101) {};
\node[node_cube, deficient, label={[black, node_cube_label]below:\small$\neck{\{1,\!4\}}$}] at (001001) {};
\end{tikzpicture}
\caption{The multigraph $N_6^-$. As before, full necklaces are indicated by filled bullets, and deficient necklaces are indicated by empty bullets.}
\label{fig:N6m}
\end{figure}

In the following we describe the clauses of our CNF formula~$\Phi(n,s)$ in verbal form.
\begin{description}[leftmargin=3ex]
\item[Link edge to node variables:]
If some edge variable $X_{i,j,e}$ is satisfied and the edge~$e$ connects nodes $u$ and $v$, then both corresponding node variables $Y_{i,j,u}$ and $Y_{i,j,v}$ must be satisfied.
Moreover, if some node variable~$Y_{i,j,u}$ is satisfied and chain~$j$ extends above the level of~$u$, then at least one edge variable~$X_{i,j,e}$ for an edge~$e$ incident with~$u$ and a node from a level above must be satisfied.
Similarly, if chain~$j$ extends below the level of~$u$, then at least one edge variable~$X_{i,j,e}$ for an edge~$e$ incident with~$u$ and a node from a level below must be satisfied.
At a deficient necklace~$u$, one or both of these edge sets are empty, and consequently a chain extending beyond the level of~$u$ in the corresponding direction will never be mapped to~$u$.

\item[Force chains to be present:]
For any chain~$j$ and a level~$k$ visited by this chain, at least one of the node variables~$Y_{i,j,u}$, where $u$ runs over all nodes on level~$k$, must be satisfied.

\item[Node-disjoint chains:] 
For any node~$u$ on a level visited by two chains $j$ and $j'$ in the same SCD~$i$, at most one of the two node variables $Y_{i,j,u}$ or $Y_{i,j',u}$ must be satisfied.

\item[Enforce unimodality:]
For any deficient necklace~$u$ on some level~$k\leq (n-1)/2$, if one of the node variables~$Y_{i,j,u}$ is satisfied, then one of the corresponding variables~$Y_{i,j,v}$, where $v$ is on level~$n-k$ and satisfies $|u|=|v|$, has to be satisfied.
Note that there may be deficient necklaces of different sizes on the same level.

\item[Edge-disjoint SCDs:]
For any two SCDs $i$ and $i'$, any two chains $j$ and $j'$ from those SCDs, and any edge~$e$ between two consecutive levels that are intersected by both chains, at most one of the two edge variables $X_{i,j,e}$ and $X_{i',j',e}$ must be satisfied.
\end{description}

A useful trick to reduce the size of the resulting CNF formula dramatically is to fix some SCDs to be particular standard decompositions, for instance the ones mentioned in Lemmas~\ref{lem:unroll-GKS} and~\ref{lem:unroll-J}, so that the corresponding edge and node variables are not free, but fixed constants.
Similarly, we may also couple certain pairs of SCDs to be complements of each other, so only one set of variables is free, and the other is forced.

\subsection{Unrolling by incremental CNF augmentation}

Any solution of the CNF formula~$\Phi(n,s)$ described before corresponds to $s$ edge-disjoint unimodal SCDs of~$N_n^-$ (and~$N_n$).
However, as the example in Figure~\ref{fig:not-unrollable} shows, these SCDs cannot always be unrolled to $s$ edge-disjoint SCDs of~$Q_n$.
Unfortunately, we have no systematic way to avoid this problem, so we resolve it in an ad-hoc fashion:
We compute a satisfying assignment of~$\Phi(n,s)$ using a SAT solver, and we test whether the current solution can be unrolled.
If not, then we take the first pair of chains from two SCDs that cannot be unrolled simultaneously, and we add an additional clause that prevents this particular pair of chains to appear in a solution, yielding an augmented CNF formula $\Phi'(n,s)$.
The advantage of this approach is that an incremental SAT solver has the ability to reuse information about the structure of~$\Phi(n,s)$ when solving the augmented instance~$\Phi'(n,s)$.
We repeat this iterative process until we either find a solution that can be unrolled to $s$ edge-disjoint SCDs of~$Q_n$, or the resulting CNF formula has no satisfying assignment.
In practice, this last case usually cannot be detected, as the solvers take too long to certify non-satisfiability.

\subsection{Good almost-orthogonal SCDs}

We take a similar incremental approach to compute good families of almost-orthogonal SCDs.
Again we start with the CNF formula~$\Phi(n,s)$, and keep adding constraints that prevent certain pairs of chains to appear.
Specifically, we forbid a pair of chains if it cannot be unrolled or if the unrolled chains intersect in more than one node (or in more than two if these are the longest chains).
It turns out that adding the following clauses right in the beginning speeds up the incremental search process considerably, as it immediately excludes many local violations of almost-orthogonality.
\begin{description}[leftmargin=3ex]
\item[Forbid diamonds:]
Consider four edges~$e=(x,v)$, $f=(v,y)$, $g=(x,w)$, and $h=(w,y)$ that form a `diamond', i.e., $x$ is on some level~$k$, the elements~$v$ and~$w$ are on level~$k+1$, and~$y$ is on level~$k+2$ of~$N_n^-$, such that for any necklace representative of~$x$, flipping the two bits corresponding to~$e$ and~$f$ leads to the same representative of~$y$ as flipping the two bits corresponding to~$g$ and~$h$.
For any two SCDs~$i$ and $i'$ and any two chains~$j$ and~$j'$ from those SCDs that intersect all levels $k$ to $k+2$, not all four edge variables~$X_{i,j,e}$, $X_{i,j,f}$, $X_{i',j',g}$, $X_{i',j',h}$ must be satisfied.
\end{description}
The goodness property could be enforced in a similar incremental way, but coincidentally, the solutions we obtained all satisfied this property right away.
In hindsight, this might not be so surprising, given that the graph formed by the 2-element chains is relatively sparse: Every SCD contributes a matching that satisfies only a $\Theta(1/n)$-fraction of all nodes on average.

\subsection{Implementation details}

We used the incremental SAT solvers Glucose~\cite{AudemardLS2013} and MiniSat~\cite{EenSorensen2003}.
The unrolling tests and incremental CNF augmentation that drive the SAT solver were implemented in C++.
Table~\ref{tab:bench} shows the sizes of the generated SAT instances, running times, and memory requirements for the four families of SCDs that we computed for proving Lemmas~\ref{lem:ortho} and \ref{lem:edge} (shown in Figures~\ref{fig:Q7_4_ortho}--\ref{fig:Q11_6_edge}).

\begin{table}
\centering
\begin{tabular}{rll|rr|lrr}
$n$  & $s$ & SCDs              & \#variables & \#clauses & solver & time & memory\\
\hline
$7$  & $4$ & almost-orthogonal & $1.152$ & $1.484$ & Glucose & 0.048~s & 11~MB \\
$11$ & $4$ & almost-orthogonal & $132.432$ & $1.437.326$ & MiniSat & 23~min & 972~MB \\ 
$10$ & $5$ & edge-disjoint     & $49.900$ & $381.880$ & Glucose & 2:29~h & 501~MB \\
$11$ & $6$ & edge-disjoint     & $198.648$ & $14.258.688$ & Glucose & 3:51~h & 1.6~GB \\
\end{tabular}
\caption{Size of SAT instances and required computing resources.
The number of variables and clauses are recorded at the end of the CNF augmentation and take into account internal simplifications carried out by the solver.}
\label{tab:bench}
\end{table}

\section{Proofs of Lemmas~\ref{lem:ortho} and \ref{lem:edge}}
\label{sec:small}

To prove Lemmas~\ref{lem:ortho} and \ref{lem:edge}, we describe families of four good almost-orthogonal SCDs and five edge-disjoint SCDs of the $n$-cube for $n=7,11$ or $n=10,11$, respectively.
We specify those SCDs in Figures~\ref{fig:Q7_4_ortho}--\ref{fig:Q11_6_edge} in compact form, by unimodal SCDs of the necklace poset~$N_n$, from which the SCDs in the $n$-cube can be recovered by unrolling as described in Section~\ref{sec:unroll} and by taking complements of some of the resulting SCDs.
We specify each chain in one of these SCDs uniquely by a particular choice of necklace representatives (recall Observations~\ref{obs:unroll-full} and \ref{obs:unroll-def} and the remarks between them).
The representatives are described by specifying the minimal and maximal elements of each chain, and the elements from~$[n]$ that are added/removed from the sets when moving along the chains.
The resulting full SCDs of~$Q_n$ are provided in files that can be downloaded from the third authors' website~\cite{www} and on the arXiv~\cite{preprint}, together with a simple Python program for verification.
In those files, subsets of~$[n]$ are encoded by their characteristic bitstrings of length~$n$ (as in Section~\ref{sec:proof-unroll}).

\subsection{Proof of Lemma~\ref{lem:ortho}}

We now describe four good almost-orthogonal SCDs of~$Q_7$ and~$Q_{11}$.
The SCDs $V_7$ and $V_{11}$ defined below are constructed as in~\cite{GriggsKillianSavage2004} (recall Section~\ref{sec:proof-unroll}), and are then unrolled together with their complements as described in Lemma~\ref{lem:unroll-GKS}.

Figure~\ref{fig:Q7_4_ortho} shows two SCDs~$V_7$ and~$W_7$ in~$N_7$, each consisting of 5~chains, that together with their complements $\ol{V_7}$ and $\ol{W_7}$ can be unrolled to four good almost-orthogonal SCDs of~$Q_7$; see the file \texttt{Q7\_4\_ortho.txt}.
Specifically, the union of all $4\cdot 14=56$ edges given by all chains of size~2 of those SCDs forms one cycle of length~14 and 14~paths on 3~edges each.
These are indeed all unicyclic components.

Figure~\ref{fig:Q11_4_ortho} shows two SCDs~$V_{11}$ and~$W_{11}$ in~$N_{11}$, each consisting of 42~chains, that together with their complements $\ol{V_{11}}$ and $\ol{W_{11}}$ can be unrolled to four good almost-orthogonal SCDs of~$Q_{11}$; see the file \texttt{Q11\_4\_ortho.txt}.
Specifically, the union of all $4\cdot 132=528$ edges given by all chains of size~2 of those SCDs forms 66~isolated edges, 22~paths on 2, 3, or 7 edges each, 22~trees on 5~edges (the trees have one degree~3 node with paths of lengths 1, 1, and 3 attached to it), and 2 cycles of length~22 with an additional dangling edge attached to each node.
These are all unicyclic components.

\subsection{Proof of Lemma~\ref{lem:edge}}

We now describe five edge-disjoint SCDs of~$Q_{10}$ and six edge-disjoint SCDs of~$Q_{11}$.
The SCD~$X_{10}$ defined below is constructed as in~\cite{Jordan2010} (recall Section~\ref{sec:proof-unroll}), and is then unrolled together with its complement as described in Lemma~\ref{lem:unroll-J}.
The SCDs of~$Q_{11}$ were computed with the help of Lemma~\ref{lem:prime-unimodal}.

Figure~\ref{fig:Q10_5_edge} shows three SCDs~$X_{10}$, $Y_{10}$, and $Z_{10}$ in~$N_{10}$, each consisting of 26~chains, that together with the complements $\ol{X_{10}}$ and $\ol{Y_{10}}$ can be unrolled to five edge-disjoint SCDs of~$Q_{10}$; see the file \texttt{Q10\_5\_edge.txt}.

Figure~\ref{fig:Q11_6_edge} shows three SCDs~$X_{11}$, $Y_{11}$, and $Z_{11}$ in~$N_{11}$, each consisting of 42~chains, that together with their complements $\ol{X_{11}}$, $\ol{Y_{11}}$, and $\ol{Z_{11}}$ can be unrolled to six edge-disjoint SCDs of~$Q_{11}$; see the file \texttt{Q11\_6\_edge.txt}.

\vspace{1cm}
\begin{figure}[h!]
\centering
\begin{tikzpicture}[scale=1,
nodelabel/.style={inner sep=0pt},
node1/.style={draw=black,fill=black,
circle,inner sep=0pt,text width=0pt,text height=0pt,
text depth=0pt,minimum size=3pt},
full/.style={fill=black},
deficient/.style={fill=white},
edgelabel/.style={above=1pt, inner sep=0pt},
path1/.style={draw}]

\def\xs{.75}
\def\ys{.5}

\node[anchor=west] (label0)  at (\xs*-3.000000,\ys*0.000000) {$V_7$};
\node[] (path1)  at (\xs*-1.000000,\ys*0.000000) {1};
\node[nodelabel,anchor=east] (n10)  at (\xs*0.000000,\ys*0.000000) {$\emptyset\ $};
\node[node1,deficient] (n10)  at (\xs*0.000000,\ys*0.000000) {};
\node[node1,full] (n11)  at (\xs*1.000000,\ys*0.000000) {};
\node[node1,full] (n12)  at (\xs*2.000000,\ys*0.000000) {};
\node[node1,full] (n13)  at (\xs*3.000000,\ys*0.000000) {};
\node[node1,full] (n14)  at (\xs*4.000000,\ys*0.000000) {};
\node[node1,full] (n15)  at (\xs*5.000000,\ys*0.000000) {};
\node[node1,full] (n16)  at (\xs*6.000000,\ys*0.000000) {};
\node[nodelabel,anchor=west] (n17)  at (\xs*7.000000,\ys*0.000000) {$\ \{1,\!2,\!3,\!4,\!5,\!6,\!7\}$};
\node[node1,deficient] (n17)  at (\xs*7.000000,\ys*0.000000) {};

\path[path1] (n10)  to[] node[edgelabel] {\scriptsize$1$} (n11);
\path[path1] (n11)  to[] node[edgelabel] {\scriptsize$2$} (n12);
\path[path1] (n12)  to[] node[edgelabel] {\scriptsize$3$} (n13);
\path[path1] (n13)  to[] node[edgelabel] {\scriptsize$4$} (n14);
\path[path1] (n14)  to[] node[edgelabel] {\scriptsize$5$} (n15);
\path[path1] (n15)  to[] node[edgelabel] {\scriptsize$6$} (n16);
\path[path1] (n16)  to[] node[edgelabel] {\scriptsize$7$} (n17);

\node[] (path2)  at (\xs*-1.000000,\ys*-1.000000) {2};
\node[nodelabel,anchor=east] (n20)  at (\xs*2.000000,\ys*-1.000000) {$\{1,\!4\}\ $};
\node[node1,full] (n20)  at (\xs*2.000000,\ys*-1.000000) {};
\node[node1,full] (n21)  at (\xs*3.000000,\ys*-1.000000) {};
\node[node1,full] (n22)  at (\xs*4.000000,\ys*-1.000000) {};
\node[nodelabel,anchor=west] (n23)  at (\xs*5.000000,\ys*-1.000000) {$\ \{1,\!2,\!4,\!5,\!6\}$};
\node[node1,full] (n23)  at (\xs*5.000000,\ys*-1.000000) {};

\path[path1] (n20)  to[] node[edgelabel] {\scriptsize$2$} (n21);
\path[path1] (n21)  to[] node[edgelabel] {\scriptsize$5$} (n22);
\path[path1] (n22)  to[] node[edgelabel] {\scriptsize$6$} (n23);

\node[] (path3)  at (\xs*-1.000000,\ys*-2.000000) {3};
\node[nodelabel,anchor=east] (n30)  at (\xs*2.000000,\ys*-2.000000) {$\{1,\!3\}\ $};
\node[node1,full] (n30)  at (\xs*2.000000,\ys*-2.000000) {};
\node[node1,full] (n31)  at (\xs*3.000000,\ys*-2.000000) {};
\node[node1,full] (n32)  at (\xs*4.000000,\ys*-2.000000) {};
\node[nodelabel,anchor=west] (n33)  at (\xs*5.000000,\ys*-2.000000) {$\ \{1,\!3,\!4,\!5,\!6\}$};
\node[node1,full] (n33)  at (\xs*5.000000,\ys*-2.000000) {};

\path[path1] (n30)  to[] node[edgelabel] {\scriptsize$4$} (n31);
\path[path1] (n31)  to[] node[edgelabel] {\scriptsize$5$} (n32);
\path[path1] (n32)  to[] node[edgelabel] {\scriptsize$6$} (n33);

\node[] (path4)  at (\xs*-1.000000,\ys*-3.000000) {4};
\node[nodelabel,anchor=east] (n40)  at (\xs*3.000000,\ys*-3.000000) {$\{1,\!3,\!5\}\ $};
\node[node1,full] (n40)  at (\xs*3.000000,\ys*-3.000000) {};
\node[nodelabel,anchor=west] (n41)  at (\xs*4.000000,\ys*-3.000000) {$\ \{1,\!3,\!5,\!6\}$};
\node[node1,full] (n41)  at (\xs*4.000000,\ys*-3.000000) {};

\path[path1] (n40)  to[] node[edgelabel] {\scriptsize$6$} (n41);

\node[] (path5)  at (\xs*-1.000000,\ys*-4.000000) {5};
\node[nodelabel,anchor=east] (n50)  at (\xs*3.000000,\ys*-4.000000) {$\{1,\!4,\!5\}\ $};
\node[node1,full] (n50)  at (\xs*3.000000,\ys*-4.000000) {};
\node[nodelabel,anchor=west] (n51)  at (\xs*4.000000,\ys*-4.000000) {$\ \{1,\!4,\!5,\!6\}$};
\node[node1,full] (n51)  at (\xs*4.000000,\ys*-4.000000) {};

\path[path1] (n50)  to[] node[edgelabel] {\scriptsize$6$} (n51);

\end{tikzpicture}

\begin{tikzpicture}[scale=1,
nodelabel/.style={inner sep=0pt},
node1/.style={draw=black,fill=black,
circle,inner sep=0pt,text width=0pt,text height=0pt,
text depth=0pt,minimum size=3pt},
full/.style={fill=black},
deficient/.style={fill=white},
edgelabel/.style={above=1pt, inner sep=0pt},
path1/.style={draw}]

\def\xs{.75}
\def\ys{.5}

\node[anchor=west] (label2)  at (\xs*-3.000000,\ys*0.000000) {$W_7$};
\node[] (path1)  at (\xs*-1.000000,\ys*0.000000) {1};
\node[nodelabel,anchor=east] (n10)  at (\xs*0.000000,\ys*0.000000) {$\emptyset\ $};
\node[node1,deficient] (n10)  at (\xs*0.000000,\ys*0.000000) {};
\node[node1,full] (n11)  at (\xs*1.000000,\ys*0.000000) {};
\node[node1,full] (n12)  at (\xs*2.000000,\ys*0.000000) {};
\node[node1,full] (n13)  at (\xs*3.000000,\ys*0.000000) {};
\node[node1,full] (n14)  at (\xs*4.000000,\ys*0.000000) {};
\node[node1,full] (n15)  at (\xs*5.000000,\ys*0.000000) {};
\node[node1,full] (n16)  at (\xs*6.000000,\ys*0.000000) {};
\node[nodelabel,anchor=west] (n17)  at (\xs*7.000000,\ys*0.000000) {$\ \{1,\!2,\!3,\!4,\!5,\!6,\!7\}$};
\node[node1,deficient] (n17)  at (\xs*7.000000,\ys*0.000000) {};

\path[path1] (n10)  to[] node[edgelabel] {\scriptsize$2$} (n11);
\path[path1] (n11)  to[] node[edgelabel] {\scriptsize$4$} (n12);
\path[path1] (n12)  to[] node[edgelabel] {\scriptsize$6$} (n13);
\path[path1] (n13)  to[] node[edgelabel] {\scriptsize$3$} (n14);
\path[path1] (n14)  to[] node[edgelabel] {\scriptsize$7$} (n15);
\path[path1] (n15)  to[] node[edgelabel] {\scriptsize$1$} (n16);
\path[path1] (n16)  to[] node[edgelabel] {\scriptsize$5$} (n17);

\node[] (path2)  at (\xs*-1.000000,\ys*-1.000000) {2};
\node[nodelabel,anchor=east] (n20)  at (\xs*2.000000,\ys*-1.000000) {$\{6,\!7\}\ $};
\node[node1,full] (n20)  at (\xs*2.000000,\ys*-1.000000) {};
\node[node1,full] (n21)  at (\xs*3.000000,\ys*-1.000000) {};
\node[node1,full] (n22)  at (\xs*4.000000,\ys*-1.000000) {};
\node[nodelabel,anchor=west] (n23)  at (\xs*5.000000,\ys*-1.000000) {$\ \{1,\!2,\!4,\!6,\!7\}$};
\node[node1,full] (n23)  at (\xs*5.000000,\ys*-1.000000) {};

\path[path1] (n20)  to[] node[edgelabel] {\scriptsize$2$} (n21);
\path[path1] (n21)  to[] node[edgelabel] {\scriptsize$4$} (n22);
\path[path1] (n22)  to[] node[edgelabel] {\scriptsize$1$} (n23);

\node[] (path3)  at (\xs*-1.000000,\ys*-2.000000) {3};
\node[nodelabel,anchor=east] (n30)  at (\xs*2.000000,\ys*-2.000000) {$\{4,\!7\}\ $};
\node[node1,full] (n30)  at (\xs*2.000000,\ys*-2.000000) {};
\node[node1,full] (n31)  at (\xs*3.000000,\ys*-2.000000) {};
\node[node1,full] (n32)  at (\xs*4.000000,\ys*-2.000000) {};
\node[nodelabel,anchor=west] (n33)  at (\xs*5.000000,\ys*-2.000000) {$\ \{1,\!2,\!3,\!4,\!7\}$};
\node[node1,full] (n33)  at (\xs*5.000000,\ys*-2.000000) {};

\path[path1] (n30)  to[] node[edgelabel] {\scriptsize$1$} (n31);
\path[path1] (n31)  to[] node[edgelabel] {\scriptsize$3$} (n32);
\path[path1] (n32)  to[] node[edgelabel] {\scriptsize$2$} (n33);

\node[] (path4)  at (\xs*-1.000000,\ys*-3.000000) {4};
\node[nodelabel,anchor=east] (n40)  at (\xs*3.000000,\ys*-3.000000) {$\{5,\!6,\!7\}\ $};
\node[node1,full] (n40)  at (\xs*3.000000,\ys*-3.000000) {};
\node[nodelabel,anchor=west] (n41)  at (\xs*4.000000,\ys*-3.000000) {$\ \{3,\!5,\!6,\!7\}$};
\node[node1,full] (n41)  at (\xs*4.000000,\ys*-3.000000) {};

\path[path1] (n40)  to[] node[edgelabel] {\scriptsize$3$} (n41);

\node[] (path5)  at (\xs*-1.000000,\ys*-4.000000) {5};
\node[nodelabel,anchor=east] (n50)  at (\xs*3.000000,\ys*-4.000000) {$\{4,\!6,\!7\}\ $};
\node[node1,full] (n50)  at (\xs*3.000000,\ys*-4.000000) {};
\node[nodelabel,anchor=west] (n51)  at (\xs*4.000000,\ys*-4.000000) {$\ \{4,\!5,\!6,\!7\}$};
\node[node1,full] (n51)  at (\xs*4.000000,\ys*-4.000000) {};

\path[path1] (n50)  to[] node[edgelabel] {\scriptsize$5$} (n51);

\end{tikzpicture}
\caption{Two SCDs~$V_7$ and~$W_7$ in the necklace poset~$N_7$ that together with their complements~$\ol{V_7}$ and~$\ol{W_7}$ can be unrolled to four good almost orthgonal SCDs in~$Q_7$.}
\label{fig:Q7_4_ortho}
\end{figure}
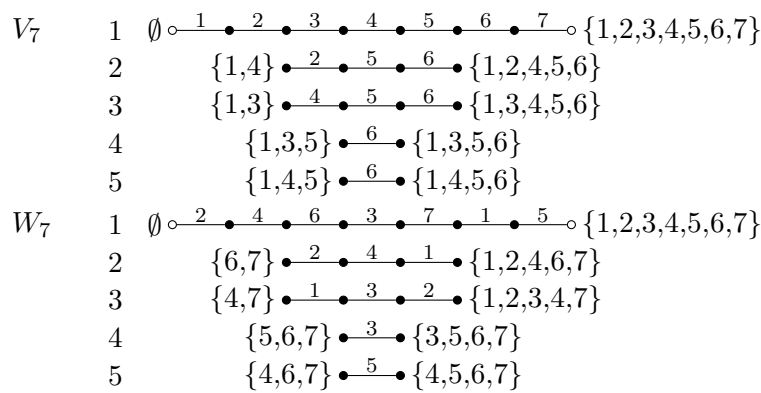
\input{figures/VW11.tex}
\input{figures/XYZ10.tex}
\input{figures/XYZ11.tex}

\FloatBarrier

\section*{Acknowledgements}

We thank Kaja Wille for several inspiring discussions about symmetric chain decompositions.
We also thank the anonymous referee of this paper whose suggestions improved the presentation in several places.
Manfred Scheucher was supported by DFG Grant FE~340/12-1.

\bibliographystyle{alphaabbrv-url}
\bibliography{refs}

\end{document}